\newtheorem{lemma}{Lemma}
\newtheorem{theorem}{Theorem}
\newtheorem{example}{Example}
\newtheorem{prop}{Proposition}
\newtheorem{defn}{Definition}
\numberwithin{equation}{section}
\numberwithin{defn}{section}
\numberwithin{prop}{section}
\numberwithin{lemma}{section}
\newcommand{\R}{\mathbb{R}}
\newcommand{\Prob}{\mathbb{P}}
\newcommand{\Z}{\mathcal{Z}}
\newcommand{\N}{\mathbb{N}}
\newcommand{\Q}{\mathbb{Q}}
\newcommand{\p}{\mathbf{p}}
\newcommand{\q}{\mathbf{q}}
\newcommand{\W}{\mathcal{W}}
\newcommand{\B}{\mathcal{B}}
\newcommand{\I}{\mathcal{I}}
\newcommand{\var}{\text{var}}
\newcommand{\D}{\mathcal{D}}
\newcommand{\E}{\mathcal{E}}
\newcommand{\A}{\mathcal{A}}
\newcommand{\M}{\mathcal{M}}
\newcommand{\F}{\mathcal{F}}
\newcommand{\diam}{\mbox{diam}}
\renewcommand{\dim}{\mbox{dim}_{\mathcal{H}}}
\title[Infinite Non-Conformal Iterated Function Systems]{Infinite Non-Conformal Iterated Function Systems}
\author{Henry WJ Reeve}
\address{Henry WJ Reeve\\ Department of Mathematics\\ The University of Bristol\\
University Walk\\Clifton\\ Bristol\\BS8 1TW\\UK}
\email{henrywjreeve@googlemail.com}
\begin{document}
\maketitle
\begin{abstract} We consider a generalisation of the self-affine iterated function systems of Lalley and Gatzouras by allowing for a countable infinity of non-conformal contractions. It is shown that the Hausdorff dimension of the limit set is equal to the supremum of the dimensions of compactly supported ergodic measures. In addition we consider the multifractal analysis for countable families of potentials. We obtain a conditional variational principle for the level sets.
\end{abstract}

\section{Introduction}
Suppose we have a compact metric space $X$ together with a finite or countable family $\F=\{S_{d}\}_{ d \in \D}$ of uniformly contracting maps $S_d: X \rightarrow X$. The attractor $\Lambda$ of $\F$ is given by,
\begin{eqnarray}
\Lambda:= \bigcup_{\omega \in \D^{\N}} \bigcap_{n \in \N} S_{\omega_1} \circ \cdots \circ S_{\omega_n}(X).
\end{eqnarray} 
When $\F$ consists of finitely many conformal contractions, satisfying the open set condition, the Hausdorff dimension of the attractor $\dim \Lambda$ is given by Bowen's formula as the unique zero of an associated pressure function \cite{Bowen's formula}, which is equal to the supremum over the dimensions of ergodic measures supported on the limit set (see \cite{Pesin} for details). When $\F$ consists of a countable infinity of conformal contractions, satisfying the open set condition, the associated pressure function may not pass through zero. Nonetheless, Mauldin and Urba\'{n}ski have shown that $\Lambda$ satisfies a modified version of Bowen's formula in which $\dim \Lambda$ is given by the infimum over all values for which the pressure function is negative \cite{Mauldin Urbanski}. Moreover, $\dim \Lambda$ is equal to the supremum over the dimensions of ergodic measures supported on compact invariant subsets of the limit set.

When $\Lambda$ is non-conformal much less is known. In \cite{Falconer Singular Value} Falconer showed that, when $\F$ consists of finitely many affine contractions, $\dim \Lambda$ is bounded above by the unique zero of an associated subadditive pressure function. Moreover, for typical $\F$, with respect to an appropriate parameterization, this value also gives a lower bound. 

However, there are very few cases in which the Hausdorff dimension of a particular non-conformal limit set is known. In most such cases the upper bound given by the subadditive pressure function is non-optimal. The first examples of this type were provided by Bedford \cite{Bedford} and McMullen \cite{McMullen}. These constructions were generalised to include a continuum of examples with variable Lyapunov exponent by Lalley and Gatzouras in \cite{Gatzouras Lalley}. A generalisation in a different direction was given by Bara\'{n}ski in \cite{Baranski}. In \cite{LuziaETDS} Luzia considers certain non-conformal and non-linear repellers closely related to the self-affine limit sets of Lalley and Gatzouras. In each of these cases $\dim \Lambda$ is equal to the supremum over the dimensions of ergodic measures supported on the limit set.

Having determined the Hausdorff dimension of the the limit set in cases where the contractions are both non-conformal and have a variable Lyapunov exponent, it is natural to consider examples of iterated function systems consisting of a countable infinity of non-conformal contractions.

\begin{example}[Gauss-R\'{e}nyi Products]\label{GR products} Given $x,y \in [0,1]$ and $n \in \N$ we let $a_n(x)\in \N$ denote the $n$th digit in the continued fraction expansion of $x$ and $b_n(y) \in \{0,1\}$ denote the $n$th digit in the binary expanion of $y$. Choose some digit set $\D \subseteq \N \times \{0,1\}$ and define,
\begin{equation*}
\Lambda:= \left\lbrace (x,y) \in [0,1]^2: (a_n(x), b_n(y)) \in \D \text{ for all }n \in \N \right\rbrace.
\end{equation*}
Then $\Lambda$ is the attractor of the iterated function system consisting of all maps of the form,
\begin{equation*}
(x,y) \mapsto \left( \frac{1}{a+x}, \frac{y+b}{2} \right) \text{ for }(x,y) \in [0,1]^2,
\end{equation*}
with $(a,b) \in \D$.
\end{example}

This example is a member of a class of constructions which we shall refer to as INC systems (see Section 2 for the definition). For all such systems we shall show that the Hausdorff dimension of the limit system is equal to the supremum over the diminsions of ergodic measures supported on compact invariant subsets of the limit set.

We shall also consider the multifractal analysis of Birkhoff averages. When $\F$ consists of finitely many conformal contractions the spectrum is well understood  \cite{Barriera Saussol, Fan Feng Wu, Olsen Multifractal 1, Olsen Winter, Pesin Weiss Birkhoff}; the dimension of the level sets is given by a conditional variational principle (see \cite[Chapter 9]{Barreira book} for details). For a useful survey on related multifractal results we recommend \cite{Cl}.

Recently there has also been a great deal of work dealing with cases in which $\F$ consists of a countable infinity of conformal contractions (see \cite{K1,K2, K3, Iommi Jordan, L1, L2, L3}). In this setting the Birkhoff spectra can display a wide variety of interesting behaviour. For example, due to the unbounded contraction rates one can have phase transitions and flat regions in the spectrum (see \cite{K2, Iommi Jordan}). In addition, when dealing with a countable infinity of potentials on an infinite IFS, one does not obtain the usual conditional variational principle (see \cite[Theorem 1.1]{L3} for example). This is a consequence of the lack of both compactness and upper semi-continuity of entropy for countable state systems. 

There has also been some work on the multifractal analysis of Birkhoff averages for $\F$ consisting of finitely many affine planar contractions with a diagonal linear part. In \cite{Jordan Simon} Jordan and Simon gave a conditional variational principle which holds for typical members of parameterizable families of examples. In \cite{Barral Mensi, Barral Feng} Barral, Mensi and Feng investigated the multifractal analysis of Birkhoff averages on the self-affine limit sets of Bedford and McMullen \cite{Bedford, McMullen}. In particular, they obtain a conditional variational principle for the level sets \cite{Barral Feng}. In \cite{My LG paper} this result is extended to include the self-affine limit sets of Lalley and Gatzouras. However, the method used in \cite{My LG paper} relies heavily upon the compactness of the associated symbolic space.

In this paper we shall consider the multifractal analysis of Birkhoff averages for a family of iterated function systems consisting of a countable infinity of non-conformal contractions which we shall refer to as INC systems. We shall obtain a conditional variational principle for the level sets of a countable infinity of Birkhoff averages on the limit set for an INC system. 

\section{Notation and statement of results}\label{Results section}

Let $I:=[0,1]$ denote the closed unit interval. Given a digit set $\B$ we let $\B^*:=\bigcup_{n \in \N}\B^n$ denote the space of all finite strings. Given a sequence of maps $\{f_{j}\}_{j \in \B}$ indexed by $\B$ and a finite string $\omega=(\omega_1,\cdots,\omega_n) \in \B^*$ we let $f_{\omega}$ denote the composition $f_{\omega}:=f_{\omega_1} \circ \cdots \circ f_{\omega_n}$.

\begin{defn}[Interval Iterated Function Systems]\label{IIFS} By an \textit{interval iterated function system} we shall mean a family $\{f_j:j \in \B\}$ of $C^1$ maps $f_j:I \rightarrow I$, indexed over some finite or countable digit set $\B$, which satisfies the following assumtions.
\begin{enumerate}
\item[(UCC)]Uniform Contraction Condition. There exists a contraction ratio $\xi \in (0,1)$ and $N \in \N$ such that for all $n\geq N$ and all $\omega \in \B^n$ we have \[\sup_{x \in I}|f'_{\omega}(x)|\leq \xi^n.\]  
\item[(OIC)]Open Interval Condition. For all $j_1, j_2 \in \B$ with $j_1\neq j_2$, we have \[f_{j_1}((0,1))\cap f_{j_2}((0,1))=\emptyset.\]
\item[(TDP)]Tempered Distortion Property. There exists some sequence $\rho_n$ with $\lim_{n\rightarrow \infty} \rho_n=0$ such that for all $n \in \N$ and for all $\omega\in \B^n$ and all $x, y \in I$ we have \[e^{-n\rho_n}\leq \frac{|f'_{\omega}(x)|}{|f'_{\omega}(y)|}\leq e^{n\rho_n}.\]
If $\B$ is finite then $\{f_j:j \in \B\}$ is said to be a \textit{finite interval iterated function system}.
\end{enumerate}
\end{defn}

\begin{defn}[INC Systems] Suppose we have a finite interval iterated function system $\{g_i: i\in \A\}$ and for each $i \in \A$ we have a (finite or countable) interval iterated function system $\{f_{ij}:j \in \B_i\}$ with $\sup_{x \in \I} |f'_{ij}(x)|\leq \inf_{x \in \I} |g'_i(x)|$ for each $j \in \B_i$. Let $\D:=\left\lbrace (i,j): i \in \A, j \in \B_i\right\rbrace$ and for each pair $(i,j) \in \D$ we let $S_{ij}$ denote the map given by
\[S_{ij}(x,y)= (f_{ij}(x),g_i(y))\text{ for }(x,y) \in I^2.\]
An iterated function system $\left\lbrace S_{ij}:(i,j) \in \D\right\rbrace$ defined in this way shall be referred to as an \textit{INC System}.
\end{defn}

We shall use the symbolic spaces $\Sigma:=\D^{\N}$, and $\Sigma_v:=\A^{\N}$, each of which is endowed with the product topology. Let $\sigma:\Sigma \rightarrow \Sigma$ and $\sigma_v:\Sigma_v \rightarrow \Sigma_v$ denote the corresponding left shift operators. We let $\pi: \Sigma \rightarrow \Sigma_v$ denote the projection given by $\pi(\omega)=(i_{\nu})_{\nu\in \N}$ for $\omega=((i_{\nu},j_{\nu}))_{\nu \in \N}\in \Sigma$. We also let $\pi(((i_{\nu},j_{\nu}))_{\nu=1}^n)=(i_{\nu})_{\nu =1}^n$ for a finite string $(i_{\nu},j_{\nu}))_{\nu=1}^n \in \D^n$.
We define a projection $\Pi: \Sigma \rightarrow I^2$ by
\begin{equation}
\Pi(\omega):=\lim_{n\rightarrow \infty}S_{\omega_1}\circ \cdots S_{\omega_n}\left(I^2\right)\text{   for   }\omega=(\omega_n)_{n \in \N}\in \Sigma.
\end{equation}
We also define a vertical projection $\Pi_v: \Sigma_v \rightarrow I$ by
\begin{eqnarray}
\Pi_v(\mathbf{i})&:=&\lim_{n\rightarrow \infty}g_{i_1}\circ \cdots g_{i_n}\left(I^2\right)\text{   for   } \mathbf{i}=(i_{\nu})_{\nu \in \N}\in \Sigma_v.
\end{eqnarray}

Let $\Lambda:= \Pi(\Sigma)$. It follows that,
\begin{equation}
\Lambda=\bigcup_{(i,j) \in \D} S_{ij}(\Lambda).
\end{equation}
Given any finite subset $\F \subset \D$ we let $\Lambda_{\F}$ denote the unique non-empty compact set satisfying,
\begin{equation}
\Lambda_{\F}=\bigcup_{(i,j) \in \F} S_{ij}(\Lambda).
\end{equation}
In addition we define $\chi \in \Sigma \rightarrow \R$ and $\psi \in \Sigma_v \rightarrow \R$ by
\begin{eqnarray}
\chi(\omega)&:=& -\log |f_{\omega_1}'\left(\Pi(\sigma \omega)\right)| \text{   for   } \omega=(\omega_{\nu})_{\nu \in \N}\in \Sigma,\\
\psi(\mathbf{i})&:=& -\log |g_{i_1}'\left(\Pi_v(\sigma \mathbf{i})\right)|\text{   for   } \mathbf{i}=(i_{\nu})_{\nu \in \N}\in \Sigma_v.
\end{eqnarray}

Let $\mathscr{A}$ denote the Borel sigma algebra on $\Sigma_v$.
We let $\M_{\sigma}(\Sigma)$ denote the set of all $\sigma$-invariant Borel probability measures on $\Sigma$ and let $\M_{\sigma}^*(\Sigma)$ denote the set of $ \mu \in \M_{\sigma}(\Sigma)$ which are supported on a compact subset of $\Sigma$. Similarly we let $\E_{\sigma}(\Sigma)$ denote the set of $ \mu \in \M_{\sigma}(\Sigma)$ which are ergodic and  $\E_{\sigma}^*(\Sigma)$  denote the set of $\mu \in \E_{\sigma}(\Sigma)$ which are compactly supported. Given $\mu \in \M^*_{\sigma}(\Sigma)$ we define
\begin{eqnarray}
D(\mu)&:=& \frac{h_{\mu}(\sigma|\pi^{-1}\mathscr{A})}{\int \chi d \mu }+\frac{ h_{\mu\circ \pi^{-1}}( \sigma_v)}{\int \psi d \mu\circ \pi^{-1}}.
\end{eqnarray}
By the Ledrappier and Young dimension formula (see \cite[Corollary D]{LYMED2}) $D(\mu)$ gives the dimension of $\mu$ for all $\mu \in \E_{\sigma}^*(\Sigma)$.

\begin{theorem}\label{countable attractor} Let $\Lambda$ be the attractor of an INC system. Then,
\begin{eqnarray*}
\dim \Lambda &=& \sup \left\lbrace D(\mu):\mu \in \E^*_{\sigma}(\Sigma) \right\rbrace\\
&=& \sup \left\lbrace D(\mu):\mu \in \M^*_{\sigma}(\Sigma) \right\rbrace\\
&=& \sup \left\lbrace \dim \Lambda_{\F}: \F \text{ is a finite subset of }\D \right\rbrace.\\
\end{eqnarray*}
\end{theorem}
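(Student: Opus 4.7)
The plan begins with a chain of immediate inequalities. Since $\Lambda_{\F} \subseteq \Lambda$ for every finite $\F \subseteq \D$, we obtain $\sup_{\F} \dim \Lambda_{\F} \leq \dim \Lambda$. Trivially $\sup_{\E^*_\sigma(\Sigma)} D \leq \sup_{\M^*_\sigma(\Sigma)} D$, and because the Ledrappier--Young formula identifies $D(\mu)$ with the Hausdorff dimension of $\mu$ itself for $\mu \in \E^*_\sigma(\Sigma)$, we have $\sup_{\mu \in \E^*_\sigma(\Sigma)} D(\mu) \leq \dim \Lambda$.

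Next comes a structural reduction for compactly supported measures. Given $\mu \in \M^*_\sigma(\Sigma)$, its support $K$ is compact, so its first-coordinate projection is a compact subset of the discrete set $\D$, and hence finite; forward invariance $\sigma(K) \subseteq K$ then forces every coordinate projection of $K$ into this same finite set $\F \subseteq \D$, so that $K \subseteq \F^{\N}$. For each finite $\F$ the subsystem $\{S_{ij}: (i,j)\in \F\}$ is a Lalley--Gatzouras-type non-linear IFS, to which Luzia's dimension formula \cite{LuziaETDS} applies, giving $\dim \Lambda_{\F} = \sup\{D(\mu) : \mu \in \M_\sigma(\F^{\N})\} = \sup\{D(\mu) : \mu \in \E_\sigma(\F^{\N})\}$. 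Taking the supremum over finite $\F$ yields
\[
\sup_{\mu \in \M^*_\sigma(\Sigma)} D(\mu) \;=\; \sup_{\F \text{ finite}} \dim \Lambda_{\F} \;=\; \sup_{\mu \in \E^*_\sigma(\Sigma)} D(\mu).
\]

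The remaining and principal inequality is $\dim \Lambda \leq \sup_{\F} \dim \Lambda_{\F}$. I will fix $s > \sup_{\F} \dim \Lambda_{\F}$ and target $H^s(\Lambda) = 0$. Choose an exhaustion $\F_n \nearrow \D$. Since $\dim \Lambda_{\F_n} < s$ automatically, it suffices to control the set $T \subseteq \Lambda$ of points $\Pi(\omega)$ whose symbolic sequences $\omega$ use infinitely many distinct digits. The plan is to stratify $T$ by the first index $k$ at which a digit outside $\F_n$ appears, writing $T \subseteq \bigcup_k T_{n,k}$ with $T_{n,k} \subseteq \bigcup_{\omega \in \F_n^{k-1}}\bigcup_{(i,j) \in \D \setminus \F_n} S_\omega \circ S_{ij}(\Lambda)$. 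The INC hypothesis $\sup|f_{ij}'| \leq \inf|g_i'|$, together with the tempered distortion property, makes each cylinder $\Pi([\omega])$ comparable to a rectangle whose vertical side dominates its horizontal side, enabling efficient $s$-covers by squares of horizontal side length.

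The main obstacle is the summability of the resulting $s$-content across all cylinders in $T$: this is the non-conformal analogue of Mauldin and Urba\'{n}ski's convergence-exponent estimate. Within each horizontal strip the rectangular aspect ratio is absorbed by the bound $\dim \Lambda_{\F_n} < s$; summation over the stratification index $k$ is then controlled by the uniform contraction condition (UCC), which guarantees geometric decay of the cumulative contribution as $k$ grows. Assembling these estimates and letting $n \to \infty$ should give $H^s(T) = 0$, and hence $\dim \Lambda \leq s$; as $s \downarrow \sup_{\F} \dim \Lambda_{\F}$ the claimed inequality follows.
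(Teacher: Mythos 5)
Your elementary inequalities and the reduction of a compactly supported invariant measure to a finite sub-alphabet ($\mathrm{supp}\,\mu\subseteq\F^{\N}$ for some finite $\F$) are fine. The proof, however, stands or falls on the inequality $\dim\Lambda\leq\sup_{\F}\dim\Lambda_{\F}$, and the covering you propose cannot establish it. Covering each rectangle $S_{\omega}\circ S_{ij}(I^2)$, $\omega\in\F_n^{k-1}$, $(i,j)\in\D\setminus\F_n$, by squares of its horizontal side length costs the aspect ratio, so the $s$-content of your $k$-th stratum is of the order $\bigl(\sum_{(i,j)\in\F_n}b_i a_{ij}^{s-1}\bigr)^{k-1}\sum_{(i,j)\notin\F_n}b_i a_{ij}^{s-1}$ (notation of Section~\ref{locally constant}, up to tempered-distortion factors). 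The geometric decay in $k$ that you attribute to (UCC) is not available: the number of words in $\F_n^{k-1}$ grows exponentially, and decay requires $\sum_{(i,j)\in\F_n}b_i a_{ij}^{s-1}<1$, i.e.\ $s$ above the root of a singular-value (affinity-type) pressure equation for the finite subsystem. That root in general strictly exceeds $\dim\Lambda_{\F_n}$: for the four affine maps $(x,y)\mapsto\bigl((x+j)/4,\,y/2\bigr)$, $j=0,\dots,3$, the attractor is $[0,1]\times\{0\}$, of dimension $1$, while the root of $4\cdot\tfrac12\cdot 4^{-(s-1)}=1$ is $3/2$. So taking $s$ slightly above $\sup_{\F}\dim\Lambda_{\F}$ does not make your sums converge; the sentence ``the rectangular aspect ratio is absorbed by the bound $\dim\Lambda_{\F_n}<s$'' is precisely the point at issue, and it fails for cylinder-by-cylinder covers, which for carpets of this type only detect box/affinity exponents, not Hausdorff dimension.

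This is why the paper does not argue this way: Theorem~\ref{countable attractor} is deduced from Theorem~\ref{Main Countable} (with constant potentials the level set is all of $\Lambda$ and the constraints are vacuous; applied to a finite subsystem, which is again an INC system, it also yields $\dim\Lambda_{\F}=\sup\{D(\mu):\mu\in\M_{\sigma}(\F^{\N})\}$). The upper bound there is proved by covering with approximate squares $B_n(\omega)$ --- cylinders grouped along the vertical stopping time $L_n(\omega)$ --- after partitioning the symbolic level set according to digit frequencies, together with a Misiurewicz-type construction of $q$-level Bernoulli measures; this machinery is what replaces the lossy rectangle-to-square covering, and any direct proof of $\dim\Lambda\leq\sup_{\F}\dim\Lambda_{\F}$ would need it (or a McMullen/Lalley--Gatzouras style count at the level of approximate squares), plus a genuine Mauldin--Urba\'{n}ski type summability argument for the infinite tail $\D\setminus\F_n$, neither of which your sketch supplies beyond a phrase. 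A secondary point: invoking Luzia or Lalley--Gatzouras for the finite subsystems is not automatic, since these are only $C^1$ with tempered distortion while those results assume affinity or stronger regularity/distortion; in the paper that identity is itself a consequence of Theorem~\ref{Main Countable}.
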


Given a potential $\varphi:\Sigma \rightarrow \R$ and $n \in \N$ we shall let $S_n(\varphi):= \sum_{l=0}^{n-1}\varphi\circ \sigma^l$, $A_n(\varphi):=n^{-1}S_n(\varphi)$ and define
\begin{equation*}
\var_n(\varphi):=\sup \left\lbrace |\varphi(\omega)-\varphi(\tau)|:\omega, \tau \in \Sigma \text{ with } \omega_l=\tau_l \text{ for }l=1,\cdots, n\right\rbrace.
\end{equation*}

\begin{defn}[Tempered Distortion Property]\label{TDP} A potential $\varphi : \Sigma \rightarrow \R$ is said to satisfy the tempered distortion property if
$\lim_{n\rightarrow \infty} \var_n(A_n(\varphi))=0.$
\end{defn}
It follows from the tempered distortion property of $\{f_{ij}: j \in \B_i\}$ and  $\{g_i: i \in \A\}$ (see Definition \ref{IIFS} (TDP)) that both $\chi$ and $\psi\circ\pi$ satisfy the tempered distortion property in Definition \ref{TDP}. We shall focus on potentials satisfying the tempered distortion property which are bounded on one side. That is, there exists some $a \in \R$ such that either $ \varphi(\omega) \leq a$ for all $\omega \in \Sigma$ or $ \varphi(\omega) \geq a$ for all $\omega \in \Sigma$. Note that this family includes every positive valued uniformly continuous potential.

Suppose we have a countable family $(\varphi_k)_{k\in \N}$ of real-valued potentials $\varphi_k \Sigma \rightarrow \R$, together with some $(\alpha_k)_{k \in \N} \subset \R \cup \{-\infty, +\infty\}$. We define,
\begin{equation}
E_{\varphi}(\alpha):=\left\lbrace \omega \in \Sigma :\lim_{n\rightarrow \infty} A_n(\varphi_k)(\omega)=\alpha_k \text{ for all } k \in \N \right\rbrace,
\end{equation}                                  
and let $J_{\varphi}(\alpha):=\Pi( E_{\varphi}(\alpha))$. Here limits are taken with respect to the usual two point compactification of $\R$.

Given $\alpha \in \R\cup \{-\infty, +\infty\}$ we define a shrinking family $\{B_m(\alpha)\}_{m \in \N}$ of neighbourhoods of $\alpha$ by,
\begin{eqnarray}
B_m(\alpha):= \begin{cases} \left\lbrace x: |x - \alpha|< \frac{1}{m} \right\rbrace \text{ if } \alpha \in \R\\ (m , +\infty)\text{ if } \alpha = +\infty
\\ (-\infty, -m)\text{ if } \alpha = -\infty.
\end{cases}
\end{eqnarray}

\begin{theorem}\label{Main Countable} Suppose we have countably many potentials $(\varphi_k)_{k \in \N}$ each of which satisfies the tempered distortion property and is bounded on one side. Then, for all $\alpha=(\alpha_k)_{k \in \N}\in \left(\R\cup \{-\infty, + \infty\}\right) ^{\N}$ we have, 
\begin{eqnarray*}
\dim J_{\varphi}(\alpha) &=& \lim_{m \rightarrow \infty}\sup\left\lbrace D(\mu):  \mu \in \E_{\sigma}^*(\Sigma), \int \varphi_k d\mu \in B_{m}(\alpha_k)   \text{ for } k \leq m\right\rbrace\\ &=&
\lim_{m \rightarrow \infty}\sup\left\lbrace D(\mu):  \mu \in \M_{\sigma}^*(\Sigma), \int \varphi_k d\mu \in B_{m}(\alpha_k)   \text{ for } k \leq m\right\rbrace.
\end{eqnarray*}
\end{theorem}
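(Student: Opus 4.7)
The plan is to prove the two inequalities separately, using the Ledrappier--Young dimension formula throughout and reducing to finite subsystems so that Theorem \ref{countable attractor} and the techniques available for finite Lalley--Gatzouras systems in \cite{My LG paper} can be invoked whenever compactness is required. The equality of the two right-hand expressions in the statement follows from ergodic decomposition combined with $\E^*_\sigma(\Sigma)\subset\M^*_\sigma(\Sigma)$, since the entropies and the integrals $\int\chi\,d\mu$, $\int\psi\,d\mu\circ\pi^{-1}$ and $\int\varphi_k\,d\mu$ all behave affinely under such decompositions; only the first equality requires genuine work.

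For the upper bound, for each $\omega\in E_{\varphi}(\alpha)$ and each $m\in\N$ there exists $N=N(\omega,m)$ with $A_n(\varphi_k)(\omega)\in B_m(\alpha_k)$ whenever $n\geq N$ and $k\leq m$, so
\[ E_{\varphi}(\alpha)\subseteq\bigcap_{m\in\N}\bigcup_{N\in\N}E_{m,N},\qquad E_{m,N}:=\{\omega\in\Sigma: A_n(\varphi_k)(\omega)\in B_m(\alpha_k)\text{ for }n\geq N,\ k\leq m\}. \]
By countable stability of Hausdorff dimension it suffices to bound $\dim\Pi(E_{m,N})$ by the $m$-th term of the supremum. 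I cover $\Pi(E_{m,N})$ by images of cylinders $S_{\omega}(I^2)$ with $\omega\in\D^n$ for large $n$, then take weak-$*$ limits of the associated empirical measures $\tfrac{1}{n}\sum_{l=0}^{n-1}\delta_{\sigma^l\omega}$; the one-sided bound on each $\varphi_k$ combined with Fatou's lemma forces any such limit $\mu\in\M^*_\sigma(\Sigma)$ to satisfy $\int\varphi_k\,d\mu\in B_m(\alpha_k)$ for $k\leq m$, while the Ledrappier--Young formula identifies $D(\mu)$ as the correct upper bound on the local dimension.

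For the lower bound, for each $m$ I pick a near-maximiser $\mu_m\in\E^*_\sigma(\Sigma)$, necessarily supported on a finite subsystem $\F_m\subset\D$, with $\int\varphi_k\,d\mu_m\in B_m(\alpha_k)$ for $k\leq m$ and $D(\mu_m)$ within $1/m$ of the $m$-th supremum. Concatenating, for $m=1,2,\ldots$, blocks of length $n_m$ drawn from points jointly typical for $\mu_m$ with respect to $\varphi_1,\ldots,\varphi_m$, the Lyapunov exponents $\chi$ and $\psi\circ\pi$, and the two relevant entropies $h_{\mu_m}(\sigma|\pi^{-1}\mathscr{A})$ and $h_{\mu_m\circ\pi^{-1}}(\sigma_v)$, produces a Moran-type Cantor set $K\subseteq\Sigma$. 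Choosing $n_{m+1}$ large enough relative to $n_1+\cdots+n_m$ ensures that the Birkhoff averages on $K$ converge to $\alpha$, so $\Pi(K)\subseteq J_{\varphi}(\alpha)$; a Frostman mass distribution on $K$, with cylinder diameters controlled by the tempered distortion property of $\chi$ and $\psi\circ\pi$, then yields $\dim\Pi(K)\geq D(\mu_m)-O(1/m)$, and sending $m\to\infty$ completes the argument.

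The main obstacle is the interplay between non-compactness of $\Sigma$ and the unboundedness of the potentials. On the lower-bound side it is handled by restricting to the finite subsystems on which $\mu_m$ lives, so that the Moran construction and the Ledrappier--Young estimate of $h_\mu(\sigma|\pi^{-1}\mathscr{A})$ along $\pi$-fibres proceed exactly as in the finite Lalley--Gatzouras case treated in \cite{My LG paper}. On the upper-bound side the delicate point is that weak-$*$ limits of empirical measures on an infinite alphabet can lose mass at infinity; the assumption that each $\varphi_k$ is bounded on one side is precisely what makes Fatou's lemma survive the limit in the correct direction, guaranteeing that the integral constraints $\int\varphi_k\,d\mu\in B_m(\alpha_k)$ are preserved in the limiting measure and thereby pinning down the $m$-th term of the supremum as a genuine dimension bound.
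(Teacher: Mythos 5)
Your upper bound rests on a step that fails in this setting. Taking weak-$*$ limits of empirical measures of points in covering cylinders requires compactness of $\Sigma$, and here the alphabet is countably infinite: the empirical measures may lose mass at infinity, the limit need not be a probability measure, and it certainly need not lie in $\M^*_{\sigma}(\Sigma)$ (compact support is part of the statement you must land in). Moreover, one-sided boundedness plus Fatou/Portmanteau only preserves \emph{one} inequality for $\int\varphi_k\,d\mu$; mass escaping to infinity can carry away part of the Birkhoff average, so the limit measure need not satisfy $\int\varphi_k\,d\mu\in B_m(\alpha_k)$ at all. This is not a technicality: Example \ref{Escape of Mass Example} (total escape of mass) is exactly the phenomenon your argument ignores, and it is the reason the theorem is stated with the limit in $m$ rather than as a standard conditional variational principle. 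A second, independent problem is non-conformality: covering $\Pi(E_{m,N})$ by cylinder images $S_{\omega}(I^2)$ is inefficient, since these are exponentially eccentric rectangles; such a cover can only see a singular-value/subadditive-type exponent, not the two-term Lalley--Gatzouras quantity $D(\mu)$, and ``Ledrappier--Young identifies $D(\mu)$ as the bound on local dimension'' is not a substitute for an actual covering argument (the limit measure is not ergodic, and no measure has yet been related to the cover). The paper instead covers by approximate squares $B_n(\omega)$ built from the stopping times $L_n(\omega)$, partitions the level set according to rational frequency data using compactness of $\Sigma_v=\A^{\N}$ only, and then extracts from the cover an explicit compactly supported $nq$-level Bernoulli measure by a Misiurewicz-type construction adjusted for non-compactness (Sections \ref{locally constant}--\ref{Proof UB}); none of this is replaced by your sketch.

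There is also a logical gap in how you close the two equalities. You prove (modulo the above) $\dim J_{\varphi}(\alpha)\le$ the $\M^*_{\sigma}$-supremum and $\dim J_{\varphi}(\alpha)\ge$ the $\E^*_{\sigma}$-supremum, and then bridge them by claiming the two suprema coincide ``by ergodic decomposition since everything is affine.'' But $D(\mu)$ is a sum of two ratios of affine functionals, hence not affine, and, more seriously, the ergodic components of a measure satisfying $\int\varphi_k\,d\mu\in B_m(\alpha_k)$ need not satisfy these constraints (a convex combination can hit $\alpha_k$ while no component does). The paper avoids this entirely by proving the sandwich the other way round: the upper bound is established with \emph{ergodic} compactly supported measures (via the Bernoulli construction and Lemma \ref{averagemeasure}, which converts $\sigma^q$-ergodic measures into $\sigma$-ergodic ones with the same $D$ and integrals), while the lower bound, by the Gelfert--Rams concatenation, realises the supremum over all of $\M^*_{\sigma}(\Sigma)$; the chain $\E^*$-sup $\ge\dim J_{\varphi}(\alpha)\ge\M^*$-sup $\ge\E^*$-sup then yields both equalities. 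Your lower-bound sketch is the right idea in spirit (concatenate blocks typical for near-maximisers, with block lengths growing fast), but as written it only reaches the $\E^*$-supremum, so it cannot repair the bridge, and it also omits the step that converts symbolic local dimension into genuine local dimension of the projected measures (the ``wide/tall'' digit analysis with $W_n$, $R_n$, $T_n$ and Lemmas \ref{SymbDimPDimH}--\ref{SymbDimPDimV}, followed by the slicing Lemma \ref{Slicing Lemma}), which is needed because projected cylinders can abut and a Frostman bound on symbolic cylinders does not transfer automatically.
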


Note that in general it is impossible to remove the dependence on $m$ and obtain a variational principle of the form \cite[Theorem 9.1.4]{Barreira book}. This is a consequence of lack of compactness in the symbolic space, along with the lack of upper semi-continuity for entropy. Example \ref{Escape of Mass Example} illustrates this phenomenon.

Nonetheless for the interior of the spectrum for a single potential we can use an argument from Iommi and Jordan \cite{Iommi Jordan} to recover the usual conditional variational principle. 
Let $\alpha_{\min}:= \inf \left\lbrace \int \varphi d\mu : \mu \in \M_{\sigma}(\Sigma)\right\rbrace$ and $\alpha_{\max}:= \sup \left\lbrace \int \varphi d\mu : \mu \in \M_{\sigma}(\Sigma)\right\rbrace$.

\begin{theorem}\label{one potential} Given a non-negative potential $\varphi: \Sigma \rightarrow \R$ satisfying the tempered distortion property and some $\alpha \in (\alpha_{\min}, \alpha_{\max})$ we have
\begin{equation*}
\dim J_{\varphi}(\alpha)=\sup \left\lbrace D(\mu):\mu \in \M^*_{\sigma}(\Sigma),   \int \varphi d \mu= \alpha  \right\rbrace.
\end{equation*}
\end{theorem}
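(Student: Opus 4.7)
\emph{Proof plan.} The inequality
\[
\dim J_{\varphi}(\alpha) \;\geq\; \sup\Bigl\{D(\mu):\mu\in\M^*_{\sigma}(\Sigma),\ \int\varphi\,d\mu=\alpha\Bigr\}
\]
is immediate from Theorem~\ref{Main Countable}: any $\mu$ with $\int\varphi\,d\mu=\alpha$ satisfies $\int\varphi\,d\mu\in B_m(\alpha)$ for every $m$, and hence contributes to each supremum appearing in the defining limit.

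For the reverse inequality I plan to adapt the convex-combination argument of Iommi and Jordan. Using Theorem~\ref{Main Countable}, first select $\mu_m\in\M^*_{\sigma}(\Sigma)$ with $\int\varphi\,d\mu_m\in B_m(\alpha)$ and $D(\mu_m)\to\dim J_{\varphi}(\alpha)$. The goal is to perturb $\mu_m$ to some $\tilde\mu_m\in\M^*_{\sigma}(\Sigma)$ with $\int\varphi\,d\tilde\mu_m=\alpha$ exactly while $D(\tilde\mu_m)\to\dim J_{\varphi}(\alpha)$. To have room to move, I would first produce compactly supported invariant measures $\nu^{\pm}\in\M^*_{\sigma}(\Sigma)$ with $\int\varphi\,d\nu^{-}<\alpha<\int\varphi\,d\nu^{+}$. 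Since $\alpha\in(\alpha_{\min},\alpha_{\max})$, ergodic decomposition furnishes ergodic members of $\M_{\sigma}(\Sigma)$ whose $\varphi$-integrals straddle $\alpha$, and a generic orbit of each such ergodic measure can, by Birkhoff's theorem and the tempered distortion of $\varphi$, be looped after a long initial block into a periodic orbit whose orbit measure lies in $\M^*_{\sigma}(\Sigma)$ and has $\int\varphi$ arbitrarily close. Choosing the sign of $\nu^{\pm}$ opposite to that of $\int\varphi\,d\mu_m-\alpha$ and setting
\[
s_m \;:=\; \frac{|\alpha-\int\varphi\,d\mu_m|}{|\int\varphi\,d\nu^{\pm}-\int\varphi\,d\mu_m|},\qquad \tilde\mu_m \;:=\; (1-s_m)\mu_m+s_m\nu^{\pm},
\]
one gets $\tilde\mu_m\in\M^*_{\sigma}(\Sigma)$ with $\int\varphi\,d\tilde\mu_m=\alpha$, and $s_m=O(1/m)\to0$ since the denominator tends to $|\int\varphi\,d\nu^{\pm}-\alpha|>0$.

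Finally, $D(\tilde\mu_m)\geq D(\mu_m)-o(1)$ will follow from a routine continuity calculation: concavity of entropy under convex combinations (with a binary-entropy correction $H(s_m)\to 0$ appearing in the upper bound on the pushforward term $h_{\tilde\mu_m\circ\pi^{-1}}(\sigma_v)$) lower-bounds the numerators of $D(\tilde\mu_m)$ by $(1-s_m)$ times those of $D(\mu_m)$ minus $O(H(s_m))$; the denominators $\int\chi\,d\tilde\mu_m$ and $\int\psi\,d\tilde\mu_m\circ\pi^{-1}$ interpolate linearly in $s_m$ and remain bounded below by the positive constant furnished by (UCC); and the a priori bound $D(\mu_m)\leq 2$ controls the size of the numerators relative to the denominators, so that all correction terms vanish as $m\to\infty$. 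Letting $\tilde\mu_m$ compete in the supremum on the right hand side then closes the argument. I expect the main delicate point to be the construction of $\nu^{\pm}$: since $\alpha_{\min}$ and $\alpha_{\max}$ are defined over all of $\M_{\sigma}(\Sigma)$ while what is needed is compactly supported representatives, the periodic-orbit approximation has to be carried out carefully and depends essentially on the tempered distortion of $\varphi$.
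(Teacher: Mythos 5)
Your proposal is correct and follows essentially the same route as the paper: the paper also deduces the result from Theorem \ref{Main Countable} via the convex-combination argument of Iommi and Jordan (their Lemma 3.2), phrased there as continuity in $\alpha$ of the supremum, which is exactly what your perturbation $\tilde\mu_m=(1-s_m)\mu_m+s_m\nu^{\pm}$ carries out explicitly. The only cosmetic difference is that you invoke a binary-entropy correction where none is needed (entropy, and by Abramov--Rokhlin the conditional entropy, is affine on invariant measures), and your periodic-orbit construction of $\nu^{\pm}$ makes explicit the compactly supported straddling measures that the paper leaves implicit.
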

\begin{proof} One can argue as in \cite[Lemma 3.2]{Iommi Jordan}, by taking convex combinations, to see that the supremum on the right depends continuously on $\alpha$. Consequently,
\begin{eqnarray*}
\lim_{m \rightarrow \infty}\sup \left\lbrace D(\mu):\mu \in \M^*_{\sigma}(\Sigma),   \int \varphi d \mu \in  B_m(\alpha)  \right\rbrace\\
\leq \sup \left\lbrace D(\mu):\mu \in \M^*_{\sigma}(\Sigma),   \int \varphi d \mu= \alpha  \right\rbrace.
\end{eqnarray*}
Thus, Theorem \ref{one potential} follows from Theorem \ref{Main Countable}.
 \end{proof}
The following examples are applications of Theorem \ref{Main Countable}.
\begin{example}[Geometric Arithmetic Mean Sets] Let $\Lambda$ be as in Example \ref{GR products}. For each $\alpha, \beta \in \R$ we define,
\begin{equation*}
\Lambda^{\times}(\alpha, \beta):= \left\lbrace (x,y) \in \Lambda: \lim_{n \rightarrow \infty}\sqrt[n]{ a_1(x) \cdots a_n(x) }= \alpha, \lim_{n \rightarrow \infty}\frac{b_1(y)+ \cdots + b_n(y)}{n}=\beta \right\rbrace.
\end{equation*}
Then $\dim \Lambda^{\times}(\alpha,\beta)$ varies continuously as a function of $(\alpha, \beta) \in \R^2$.
\end{example}

\begin{example}[Arithmetic Mean Sets] Let $\Lambda$ be as in Example \ref{GR products}. For each $\alpha, \beta \in \R$ we define,
\begin{equation*}
\Lambda^{+}(\alpha, \beta):= \left\lbrace (x,y) \in \Lambda: \lim_{n \rightarrow \infty}\frac{a_1(x)+ \cdots + a_n(x)}{n}=\alpha, \lim_{n \rightarrow \infty}\frac{b_1(y)+ \cdots + b_n(y)}{n}=\beta \right\rbrace.
\end{equation*}
Then $\dim \Lambda^{+}(\alpha,\beta)$ varies continuously as a function of $(\alpha, \beta) \in \R^2$.
\end{example}

\begin{example}[Total Escape of Mass] \label{Escape of Mass Example} Within the setting of Example \ref{GR products} we consider the set,
\begin{equation*}
\Lambda_{\infty}(\D):=\left\lbrace (x,y) \in \Lambda: \lim_{n \rightarrow \infty}\frac{\#\left\lbrace l \leq n: a_l(x)=m\right\rbrace}{n}=0\text{ for all } m \in \N \right\rbrace.
\end{equation*}
If $\D$ is finite then $\Lambda_{\infty}(\D)$ is clearly empty. However, if $\D:= \left\lbrace (n, n \mod 2): n \in \N\right \rbrace$, then $\dim \Lambda_{\infty}(\D)=\frac{3}{2}$.
\end{example}

The rest of the paper will be direceted towards proving Theorem \ref{Main Countable}, from which Theorems \ref{one potential} and \ref{countable attractor} follow. The proof will consist of an upper bound, contained in sections \ref{locally constant} and \ref{Proof UB} and a lower bound, contained in sections \ref{Preliminaries for lower bound} and \ref{Proof LB}. We begin the proof of the upper bound by proving an upper estimate, in Section \ref{locally constant}, for the dimension of the level sets in the special case in which we have have finitely many locally constant potentials. It is in proving this initial upper estimate that many of the difficulties lie. We use the compactness of the vertical symbolic space $\Sigma_v=\A^{\N}$ to partition the symbolic level sets into a countable number of sets for which certain sequences depending only upon $\pi(\omega)$ converge to some prescribed value along a sequence of good times. We then use the sequence of good times to obtain an efficient covering by approximate squares. A Misiurewicz-type argument (see \cite{Misiurewicz}) based on \cite{non-uniformly hyperbolic} is then used to extract a conditional $n$-th level Bernoulli measure for each of the horizontal fibers from the covering. Note that Misiurewicz's argument must be adjusted to deal with the lack of compactness. By weighting the horizontal fibres according to a Bernoulli measure derived from the frequencies of certain digits, along a subsequence of good times, we obtain an $n$-th level Bernoulli measure which not only has dimension close to the exponent given by the covering, but also integrates each of the potentials to approximately the correct value. In section \ref{Proof UB} we apply a series of approximation arguments to deduce the upper bound given in Theorem \ref{Main Countable} from the upper estimate from Section \ref{locally constant}.

The prove the lower bound we use the technique of concatenating measures applied by Gelfert and Rams in \cite{Gelfert Rams}. For each $m \in \N$ we obtain an compactly supported ergodic measure, with near optimal dimension, which integrates each of the first m potentials to approximately the required value. By carefully concatenating a sequence of such measures it is possible to obtain a measure for which typical points, with respect to that measure, have local dimension equal to the expression in Theorem \ref{Main Countable} and for which each of the countably many Birkhoff averages converge to the required value.

\section{The upper bound for locally constant potentials}\label{locally constant}

In this section we shall make the following simplifying assumptions. Firstly, we will suppose that there exists a contraction ratio $\zeta \in (0,1)$ such that for each $i \in \A$, $\sup_{x \in I}|g'_{i}(x)|\leq \zeta$. Secondly, we will suppose that we have finitely many  potentials, $\varphi^1, \cdots, \varphi^K$, each of which is both locally constant and bounded below by $1$. That is, for each $k=1, \cdots, K$, there exists a $\D$-sequence $(\varphi^k_{ij})_{(i,j) \in \D}$ such that $\varphi^k(\omega)= \varphi^k_{\omega_1}\geq 1$ for all $\omega=(\omega_{\nu})_{\nu \in \N} \in \Sigma$.

We shall often view the K-tuple of potentials, $\varphi^1, \cdots, \varphi^K$ as a single vector valued potential $\varphi: \omega \mapsto (\varphi_{\omega_1}^k)_{k=1}^K$, taking values in $\R^K$. We endow $\R^K$ with the supremum metric, which we shall denote by $|| \cdot ||_{\infty}$, as well as the usual partial order given by $(c_k)_{k=1}^K \leq (d_k)_{k=1}^K$ if and only if $c_k\leq d_k$ for all $k=1,\cdots, K$. We also let $[c,d]:= \left\lbrace x \in \R^K: c \leq x \leq d \right\rbrace$.

Let $\R\cup\{-\infty, +\infty\}$ denote the usual two-point compactification of $\R$. Given a sequence of real numbers $(a_n)_{n\in\N}$ we let $\Omega((a_n)_{n \in \N})$ denote its set of accumulation points in $\R\cup\{-\infty, +\infty\}$. For each $k=1,\cdots, K$, we fix some (possibly infinite) interval $\Gamma_k=[\gamma^k_{\min},\gamma^k_{\max}]  \subseteq \R\cup\{+\infty\}$, let $\Gamma:=\prod_{k=1}^K\Gamma_k=[\gamma_{\min}, \gamma_{\max}]$, where $\gamma_{\min}:=(\gamma_{\min}^k)_{k=1}^K$ and  $\gamma_{\max}:=(\gamma_{\max}^k)_{k=1}^K$. Define,
\begin{equation}
E_{\varphi}(\Gamma):=\left\lbrace \omega \in \Sigma :\Omega((A_n(\varphi)(\omega))_{n \in \N}) \subseteq \Gamma \right\rbrace,
\end{equation}
and let $J_{\varphi}(\Gamma):= \Pi(E_{\varphi}(\Gamma))$.

For each $(i,j) \in \D$ we let $b_i:= \sup_{x \in I}|g'_{i}(x)|$ and $a_{ij}:=\sup_{x \in I}|f'_{ij}(x)|$. We define $\tilde{\chi}:\Sigma \rightarrow \R$ and $\tilde{\chi}^v: \Sigma_v \rightarrow \R$ by
\begin{eqnarray}
\tilde{\chi}(\omega)&:=& -\log a_{\omega_1} \text{   for   } \omega=(\omega_{\nu})_{\nu \in \N}\in \Sigma,\\
\tilde{\psi}(\mathbf{i})&:=& -\log b_{i_1}\text{   for   } \mathbf{i}=(i_{\nu})_{\nu \in \N}\in \Sigma_v.
\end{eqnarray}
Given $q \in \N$ and $\mu \in \M^*_{\sigma^q}(\Sigma)$ we define
\begin{eqnarray}
\tilde{D}_q(\mu)&:=& \frac{h_{\mu}(\sigma^q|\pi^{-1}\mathscr{A})}{\int S_q(\tilde{\chi}) d \mu }+\frac{ h_{\mu\circ \pi^{-1}}( \sigma_v^q)}{\int S_q(\tilde{\psi}) d \mu\circ \pi_v^{-1}}.
\end{eqnarray}

\begin{prop} \label{locally constant prop}
\[\dim J_{\varphi}(\alpha)\leq \lim_{\xi \rightarrow 0}\left\lbrace \tilde{D}_q(\mu): q \in \N, \mu \in \E_{\sigma^q}^*(\Sigma), \int A_q(\varphi) d\mu \in [\gamma_{\min}-\xi, \gamma_{\max}+\xi]\right\rbrace.\]
\end{prop}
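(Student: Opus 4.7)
The plan is to follow the scheme sketched in the introduction: partition the symbolic level set $E_\varphi(\Gamma)$ into countably many measurable pieces on which the relevant empirical data converge along a sequence of good times, cover the $\Pi$-image of each piece by approximate squares indexed by these times, and then extract a compactly supported ergodic $\sigma^q$-invariant measure whose $\tilde D_q$-value controls the resulting covering exponent.

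Fix $q \in \N$ and $\xi > 0$. For every $\omega \in E_\varphi(\Gamma)$, we may pass to a subsequence $(n_\ell)$ of multiples of $q$ along which both the length-$q$ block frequencies of $\pi(\omega)$ in $\A^q$ converge to some probability vector (possible because $\A^q$ is finite) and $A_{n_\ell}(\varphi)(\omega)$ converges to some $\alpha \in \Gamma$ (possible because $\Gamma$ is compact in $(\R\cup\{\pm\infty\})^K$ and $\omega \in E_\varphi(\Gamma)$). Discretizing the limit frequency vector and $\alpha$ on a rational grid partitions $E_\varphi(\Gamma)$ into countably many measurable pieces, and since Hausdorff dimension is stable under countable unions, it suffices to bound $\dim \Pi(E)$ on a single piece $E$. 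On such a piece and for each good time $n$, the cylinder $[\omega|_n]$ has horizontal size $\prod_{l=1}^n a_{\omega_l}$ and vertical size $\prod_{l=1}^n b_{i_l}$; choosing $m=m(\omega,n)$ with $\prod_{l=1}^m a_{\omega_l}$ comparable to $\prod_{l=1}^n b_{i_l}$ yields an \emph{approximate square} containing $\Pi([\omega|_n])$ whose both sides are of order $\prod_{l=1}^n b_{i_l}$, uniformly via tempered distortion. A Stirling count of the number of such approximate squares required to cover $\Pi(E)$, expressed through the prescribed $\A^q$-block frequencies and the induced horizontal conditional frequencies on $\D^q$, yields a covering exponent equal to $\tilde D_q(\mu)$ for a suitable $\sigma^q$-Bernoulli measure $\mu$ on $(\D^q)^\N$.

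The principal obstacle is that the horizontal alphabet $\bigcup_{i} \B_i$ is countably infinite, so the space of block-frequency vectors on $\D^q$ is not compact and a naive Misiurewicz-type extraction need not yield a compactly supported measure. I would circumvent this by refining the partition further: fix an exhaustion $\F_t \uparrow \D$ and restrict to those $\omega$ for which the proportion of $l \le n_\ell$ with $\omega_l \notin \F_t$ is bounded by a prescribed $\epsilon_t \downarrow 0$. The complementary set can be covered by a variant of the same approximate-square count in which the digits outside $\F_t$ are free, contributing an amount to the exponent which vanishes as $t \to \infty$ (this uses $\varphi^k \ge 1$ and the constraint $A_{n_\ell}(\varphi)(\omega) \to \alpha$ to keep the exceptional digits symbolically tight whenever $\gamma^k_{\max}<\infty$, and is unconditional otherwise since the $k$-th integral is only required to be at least $\gamma^k_{\min}-\xi$). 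On the restricted set the block-frequency vectors live in a finite-dimensional compact simplex, and a Misiurewicz-type argument following \cite{non-uniformly hyperbolic} then extracts an ergodic $\sigma^q$-invariant measure $\mu_t$ supported on $(\F_t)^\N$ with $\tilde D_q(\mu_t)$ at least the covering exponent minus $o(1)$.

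To conclude, the $\A^q$-marginal of $\mu_t$ differs from the good-time block distribution on the piece by only the rational discretization error, and that distribution was chosen so that $A_{n_\ell}(\varphi)(\omega) \to \alpha \in \Gamma$. Since $\int A_q(\varphi)\, d\mu_t$ depends linearly on this marginal, a sufficiently fine discretization forces $\int A_q(\varphi)\, d\mu_t \in [\gamma_{\min}-\xi, \gamma_{\max}+\xi]$. Each piece therefore has Hausdorff dimension at most $\sup\{\tilde D_q(\mu) : \mu \in \E_{\sigma^q}^*(\Sigma),\ \int A_q(\varphi)\, d\mu \in [\gamma_{\min}-\xi, \gamma_{\max}+\xi]\}$ up to an error that disappears as $t \to \infty$, and passing to the countable union of pieces and then to the limit $\xi \to 0$ gives the stated bound on $\dim J_\varphi(\Gamma)$.
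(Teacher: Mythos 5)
Your treatment of the ``good'' part (finite sub-alphabet, frequencies in a compact simplex, Misiurewicz-type extraction of a compactly supported $q$-level Bernoulli measure) is broadly in the spirit of what the paper does, but the way you dispose of the non-compactness of the horizontal alphabet is a genuine gap, and it sits exactly where the difficulty of this proposition lies. You restrict to points whose proportion of digits outside a finite exhaustion set $\F_t$ is at most $\epsilon_t$, and you claim the complementary set ``can be covered by a variant of the same approximate-square count in which the digits outside $\F_t$ are free, contributing an amount to the exponent which vanishes as $t\rightarrow\infty$.'' This cannot be right as stated. First, over an infinite alphabet a count of cylinders is meaningless (there are infinitely many of each length), so any covering bound must weight cylinders by $\diam^s$, and the resulting series need not be small, nor even finite, at the relevant exponents; when every $\gamma^k_{\max}=\infty$ (the case you call ``unconditional'') there is no constraint at all on the exceptional digits, and even when $\gamma^k_{\max}<\infty$ the bound $\varphi^k\geq 1$ only controls the weighted number of digits with large $\varphi^k$-value, not which digits outside $\F_t$ occur. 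Second, and decisively, the set of $\omega$ in the level set whose digit frequencies escape every finite subalphabet can carry full dimension: Example \ref{Escape of Mass Example} gives a subset on which the frequency of every digit is $0$ --- so for every $\F_t$ the proportion of exceptional digits tends to $1$ --- yet whose dimension is $3/2$. The ``bad'' set is therefore not dimensionally negligible; its dimension must itself be bounded by the same variational quantity. The paper achieves this by never counting in the horizontal direction: the covering estimate (Lemma \ref{Building a Cover}) is phrased through the series quantity $s_{n,\epsilon}(\Gamma,\rho,\lambda)$, defined by requiring $\sum_{(\vartheta^i)}\prod_i a_{\vartheta^i}^s\leq 1$ over the possibly infinite families $\B^{n,\epsilon}(\Gamma,\rho,\lambda)$, with the countable decomposition indexed only by rational data over the \emph{finite} vertical alphabet $\A$; compact support is recovered only afterwards (Lemma \ref{construct a measure}) by extracting a finite subfamily whose subsum at exponent $s$ already equals $1$ and building an explicit interleaved Bernoulli measure from it. In your plan the truncation happens at the covering stage, where it cannot be justified.

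A secondary point: the approximate square mixes two scales --- in the paper's parameterization, $n$ horizontal digits but $L_n(\omega)\geq n$ vertical digits --- so the vertical part of the covering is governed by the digit frequencies at the deeper time, while the Birkhoff constraints and the horizontal data live at time $n$. The paper records both limits, $P(\omega)$ at the times $n_q$ and $Q(\omega)$ at the times $L_{n_q}(\omega)$, and chooses the subsequence to realize the limsup of the vertical entropy-type ratio so that $d(Q(\omega))\leq d(P(\omega))$; your sketch records only one family of block frequencies and would conflate the two scales. This part is fixable, but it needs to be addressed explicitly, as does the fact that a ``Stirling count'' on the finite sub-alphabet must also carry the empirical Lyapunov data (your inclusion of conditional $\D^q$-frequencies would provide this).
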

\subsection{Building a Cover}

Define
\begin{equation}
L_n(\omega):=\min \left\lbrace l \geq 1 : \prod^l_{\nu=1} b_{i_{\nu}} \leq \prod^n_{\nu=1} a_{i_{\nu}j_{\nu}}\right\rbrace .
\end{equation}
Note that this implies
\begin{equation} \label{Lyapunov Ratios}
1 \leq \frac{\prod_{\nu=1}^{n} a_{i_{\nu} j_{\nu}}}{\prod_{\nu=1}^{L_n(\omega)} b_{i_{\nu}}} < b_{\min}^{-1} .
\end{equation}
Moreover, since $a_{ij}\leq b_i$ for all $(i,j) \in \D$, $L_n(\omega) \geq n$.

Given $(\omega_{\nu})_{\nu=1}^n=((i_{\nu},j_{\nu}))_{\nu =1}^n\in \D^n$ we let
\begin{equation}
[\omega_{1} \cdots \omega_n]:=\{ \omega' \in \Sigma: \omega'_{\nu}= \omega_{\nu} \text{ for } \nu=1, \cdots, n\}
\end{equation}
and 
\begin{equation}
[i_{1} \cdots i_n]:=\{ \mathbf{i}' \in \Sigma_v: i'_{\nu}= i_{\nu} \text{ for } \nu=1, \cdots, n\}.
\end{equation}
Given $\omega= ((i_{\nu}, j_{\nu}))_{\nu =1}^{\infty}\in \Sigma$ we let $B_n(\omega)$ denote the $n$th approximate square, \begin{equation}
B_n(\omega):=\Pi([\omega_1 \cdots \omega_{n} ]\cap \sigma^{-n}\pi^{-1}[i_{n+1} \cdots i_{L_n(\omega)}]). 
\end{equation}
Thus,
\begin{equation}
\diam(B_n(\omega))\leq \max\left\lbrace \left(\prod_{\nu=1}^{n} a_{i_{\nu} j_{\nu}}\right), b_{\min}^{-1}\left(\prod_{\nu=1}^{L_n(\omega)} b_{i_{\nu}}\right)\right\rbrace.
\end{equation}
We also define a map $\phi_i: \bigcup_{n \in \N\cup\{0\}} \D^n \rightarrow \{i\} \times \bigcup_{n \in \N\cup\{0\}} \B_i^n$ for each $i \in \A$ by 
\[\phi_i:((i'_1,j'_{1}), (i'_1,j'_{2}), \cdots, (i'_1,j'_{n})) \mapsto ((i,j'_{\nu_1}), (i,j'_{\nu_2}), \cdots, (i,j'_{\nu_{n_i}})),\]
where $\nu_1 <\nu_2<\cdots <\nu_{n_i}$ and $\left\lbrace \nu_l \right\rbrace_{l=1}^{n_i}=\left\lbrace r \leq n: i'_r=i \right\rbrace$.

Given $q \in \N$ we define,
\begin{eqnarray*}
\Prob_q(\A):&=& \left\lbrace (p_{i})_{i \in \A^q}\in [0,1]^{\A^q}: \sum_{i \in \A^q}p_{i}=1\right\rbrace,\\
\Q_q(\A):&=& \left\lbrace (p_{i})_{i \in \A^q}\in \Prob_q(\A): p_{i}\in \Q\backslash\{0\} \text{ for each }i \in \A^q \right\rbrace.
\end{eqnarray*} 
Each $\Prob_q(\A)$ is given the maximum norm $||\cdot||_{\infty}$. Note that for each $q \in \N$, $\Prob_q(\A)$ is compact and $\Q_q(\A)$ is a dense countable subset. We let $\Prob(\A):=\Prob_1(\A)$ and $\Q(\A):=\Q_1(\A)$.

Given $\mathbf{p}=(p_{i})_{i \in \A^q}\in \Prob_q(\A)$  we define,
\begin{equation}
d_q(\mathbf{p}):=\frac{ \sum_{i\in \A^q} p_{i} \log p_i}{\sum_{i \in \A^q} p_{i}\log b_{i}}=\frac{ h_{\mu_{\p}}( \sigma_v^q)}{\int S_q(\tilde{\psi}) d \mu_{\p}},
\end{equation}
where $\mu_{\p}$ denotes the $q$-th level Bernoulli measure on $\Sigma_v$ defined by $\mu_{\p}([i])=p_i$ for all $i \in \A^q$. We let $d(\p):=d_1(\p)$ for $\p \in \Prob(\A)$.

Given $\rho \in \Q(\A)$, $n \in \N$, and $\lambda =(\lambda^k)_{k=1}^K \in \Q(\A)^K$ with $\lambda^k=(\lambda_i^k)_{i \in \A}$ for each $k=1, \cdots, K$ we define,
\begin{equation}
\B^{n,\epsilon}_{i}(\Gamma, \rho, \lambda):=\left\lbrace (ij_{\nu})_{\nu=1}^l: l =\rho_i n \pm \epsilon n, \sum_{\nu=1}^l\varphi^k_{ij_{\nu}} \pm \epsilon n \in \lambda^k_i \Gamma \right\rbrace,
\end{equation}
for each $i \in \A$ and let
\begin{equation}
\B^{n,\epsilon}(\Gamma, \rho, \lambda):=\left\lbrace (\vartheta^i)_{i\in \A} \in \prod_{i \in \A}\B^{n,\epsilon}_{i}(\Gamma, \rho, \lambda) : \sum_{i \in \A}|\vartheta^i|=n  \right\rbrace.
\end{equation}

Now define,
\begin{eqnarray*}
s_{n,\epsilon}(\Gamma,\rho, \lambda)&:=&\inf \left\lbrace s: \sum_{(\vartheta^i)_{i \in \A} \in \B^{n,\epsilon}(\Gamma, \rho, \lambda)} \prod_{i \in \A}a_{\vartheta^i}^s  \leq 1 \right\rbrace,\\
s_{\epsilon}(\Gamma,\rho, \lambda)&:=& \limsup_{n \rightarrow \infty} s_{n,\epsilon}(\Gamma,\rho, \lambda),\\
\delta_{\epsilon}(\Gamma)&:=&\sup \left\lbrace s_{\epsilon}(\Gamma,\rho, \lambda)+d(\rho): \rho \in \Q(\A), \lambda \in \Q(\A)^K \right\rbrace,\\
\delta(\Gamma)&:=& \liminf_{\epsilon \rightarrow 0} \delta_{\epsilon}(\Gamma).
\end{eqnarray*}

\begin{lemma}[Building a Cover]\label{Building a Cover} $\dim J_{\varphi}(\Gamma) \leq \delta(\Gamma).$
\end{lemma}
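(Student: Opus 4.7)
The plan is to decompose $E_\varphi(\Gamma)$ into a countable family of subsets indexed by rational ``types'' $(\rho, \lambda) \in \Q(\A) \times \Q(\A)^K$, cover the image of each by approximate squares at suitable good times, and estimate the Hausdorff sums via the definition of $s_\epsilon(\Gamma, \rho, \lambda)$ together with an entropy cancellation. For $\omega \in E_\varphi(\Gamma)$ the hypothesis $\Omega((A_n\varphi(\omega))_n) \subseteq \Gamma$ forces $A_n\varphi(\omega)$ eventually into every neighbourhood of $\Gamma$ in the compactified product. Combining this with compactness of $\Prob(\A)$ (which uses finiteness of $\A$) and of $\Gamma$, I would extract arbitrarily large $n$ along which simultaneously: (i) the empirical vertical distribution $n^{-1}\#\{\nu \le n : i_\nu = i\}$ is $\epsilon$-close to some $\rho \in \Q(\A)$; (ii) for each $i,k$ the partial sum $\sum_{\nu \le n,\, i_\nu = i} \varphi^k_{ij_\nu}$ is within $\epsilon n$ of $n\lambda^k_i \alpha^k$ for some $\lambda \in \Q(\A)^K$ and $\alpha \in \Gamma$; and (iii) the empirical vertical distribution on $[1, L_n(\omega)]$ is $\epsilon$-close to the same $\rho$. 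Density of $\Q$ in $\Prob$ then yields $E_\varphi(\Gamma) \subseteq \bigcup_{\rho, \lambda} E(\rho, \lambda, \epsilon)$, where $E(\rho, \lambda, \epsilon)$ is the set of $\omega$ admitting infinitely many such good times.

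Next, for $\omega \in E(\rho, \lambda, \epsilon)$ and a good time $n$, the approximate square satisfies $\diam B_n(\omega) \le b_{\min}^{-1} \prod_{\nu=1}^n a_{\omega_\nu}$. Writing $\vartheta^i := \phi_i(\omega|_n)$, one has $\prod_\nu a_{\omega_\nu} = \prod_i a_{\vartheta^i}$, and (i)--(ii) translate exactly to $(\vartheta^i) \in \B^{n,\epsilon}(\Gamma, \rho, \lambda)$. The family $\{B_n(\omega)\}$ covers $\Pi(E(\rho, \lambda, \epsilon))$ by approximate squares with diameters tending to zero. A distinct approximate square is specified by a triple (horizontal type $\vartheta \in \B^{n,\epsilon}(\Gamma, \rho, \lambda)$, interleaving of $\vartheta$ into $\omega|_n$, vertical tail $(i_{n+1}, \ldots, i_{L_n})$): the number of interleavings producing $\omega|_n$ from a fixed $\vartheta$ is a multinomial coefficient bounded by $\exp(nH(\rho) + o(n))$ with $H(\rho) := -\sum_i \rho_i \log \rho_i$, while (iii) lets me bound the number of admissible vertical tails by $\exp((L_n - n)H(\rho) + o(n))$.

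Setting $s := s_\epsilon(\Gamma, \rho, \lambda) + d(\rho) + 2\delta$, I then estimate the Hausdorff $s$-premeasure. By the defining property of $L_n$ together with $\rho$-typicality on $[1, L_n]$,
\[\prod_{\nu=1}^n a_{\omega_\nu} \asymp \prod_{\nu=1}^{L_n} b_{i_\nu} = \exp\!\Big(L_n \sum_i \rho_i \log b_i + o(n)\Big),\]
and the identity $d(\rho)\sum_i \rho_i \log b_i = \sum_i \rho_i \log \rho_i = -H(\rho)$ gives $(\prod_\nu a_{\omega_\nu})^{d(\rho)} \le \exp(-L_n H(\rho) + o(n))$. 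Since $n + (L_n - n) = L_n$, this factor cancels the product of the interleaving and vertical-tail entropy counts exactly. By the definition of $s_\epsilon$, $\sum_\vartheta \prod_i a_{\vartheta^i}^{s_\epsilon} \le 1$ for all $n$ large, and the leftover $(\prod_\nu a_{\omega_\nu})^\delta \le \xi^{n\delta}$ renders the geometric sum over $n$ finite. Hence $\mathcal{H}^s(\Pi(E(\rho, \lambda, \epsilon))) = 0$, so $\dim \Pi(E(\rho, \lambda, \epsilon)) \le s_\epsilon(\Gamma, \rho, \lambda) + d(\rho)$; a countable union over $(\rho, \lambda)$ followed by $\epsilon \to 0$ yields $\dim J_\varphi(\Gamma) \le \delta(\Gamma)$.

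The principal difficulty is arranging the three-way exponential cancellation in the last step, between the multinomial interleaving count on $[1, n]$, the vertical-tail count on $[n+1, L_n]$, and the $d(\rho)$-exponent of the diameter. For this to balance, both empirical counts must be governed by the \emph{same} distribution $\rho$, which is what forces the auxiliary refinement (iii) above; this is a Misiurewicz-style precaution, arranged by one additional round of compactness/diagonalisation before the rational discretisation of types.
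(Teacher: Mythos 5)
There is a genuine gap, and it sits exactly at your auxiliary condition (iii), which is where the real difficulty of this lemma lies. Your three-way cancellation (multinomial interleaving count on $[1,n]$, vertical-tail count on $(n,L_n]$, and the $d(\rho)$-power of the diameter) requires the empirical vertical distribution at time $n$ \emph{and} at time $L_n(\omega)$ to be $\epsilon$-close to the \emph{same} vector $\rho$ along infinitely many $n$. Compactness of $\Prob(\A)$ only yields a subsequence along which $(P_i(\omega|n_q))_i \rightarrow P(\omega)$ and $(P_i(\omega|L_{n_q}(\omega)))_i \rightarrow Q(\omega)$; it does not make $P(\omega)=Q(\omega)$, and in general no choice of times does. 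Concretely, take $\A=\{1,2\}$ with $b_1=b_2=1/2$, let each $\B_i$ consist of affine maps of ratio $2^{-\kappa}$, one for every $\kappa\geq 1$, and take $\varphi\equiv 1$, $\Gamma=[1,1]$, so that $E_{\varphi}(\Gamma)=\Sigma$. Then $L_n(\omega)=\sum_{\nu\leq n}\kappa_\nu$, where the increments $\kappa_\nu\geq 1$ are arbitrary positive integers chosen through the horizontal digits, independently of the vertical digits. Choose the vertical string so that the frequency of the symbol $1$ oscillates forever between $1/4$ and $3/4$, and then choose the increments so that $L_n$ always lies in a (later) region where this frequency exceeds $5/8$ whenever the frequency at time $n$ is at most $1/2$, and in a region where it is below $3/8$ otherwise; this only requires $L_n$ to be nondecreasing with increments at least one, so the necessary forward jumps are always available. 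For such $\omega$ one has $||P(\cdot|n)-P(\cdot|L_n(\omega))||_{\infty}\geq 1/8$ for all large $n$, so $\omega$ belongs to none of your sets $E(\rho,\lambda,\epsilon)$ once $\epsilon<1/16$, and your countable decomposition of $E_{\varphi}(\Gamma)$ fails.

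The paper's proof is organised precisely to avoid this. It never asks that $P(\cdot|L_n)$ be close to $\rho$: the good times $n_q$ are chosen so that the entropy--Lyapunov quotient $\sum_iP_i(\omega|n)\log P_i(\omega|n)\big/\sum_iP_i(\omega|n)\log b_i$ attains its limsup, which yields only the one-sided comparison $d(Q(\omega))\leq d(P(\omega))$ (the Misiurewicz-type adjustment), and the vertical strings are then handled not by counting tails and interleavings against $\exp$ of an entropy, but by weighting each vertical cylinder with the Bernoulli weight $\rho_{i_1}\cdots\rho_{i_{L_n}}$, whose total mass over all strings of a fixed length is exactly $1$. The inequality actually needed, namely that $\bigl(\prod_{\nu\leq L_n}b_{i_\nu}\bigr)^{d(\rho)+2\xi}$ is bounded by $\rho_{i_1}\cdots\rho_{i_{L_n}}$ up to the factor $b_{\min}^{-d(\rho)-2\xi}\zeta^{L_n\xi}$, follows from the quotient comparison alone and holds even when $Q(\omega)$ is far from $\rho$. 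If you wish to keep your counting formulation, note that by concavity of entropy the number of (prefix, tail) pairs is governed by the entropy of the distribution at scale $L_n$, so what your cancellation truly needs is $d(P(\cdot|L_n))\leq d(\rho)+o(1)$, i.e. exactly the limsup trick rather than your (iii); you would also need to measure your error terms against $L_n$ rather than $n$ (the ratio $L_n/n$ is unbounded when $\B_i$ is infinite), which the leftover factor $\zeta^{\delta L_n}$ can still absorb.
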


\begin{proof} Take some $\xi>0$. Note that the map $\p \mapsto d(\p)$ defines a continuous function on the compact space $\Prob(\A)$. Consequently there exists some $\epsilon>0$ such that $\delta_{\epsilon}(\Gamma)< \delta(\Gamma)+\xi$ and for all $\p, \q \in \Prob(\A)$ with $||\p-\q||_{\infty}<\epsilon$ we have $|d(\p)-d(\q)|<\xi$.

We shall define a function $F_{\xi}: \Sigma \rightarrow \Q(\A)^{2+K}$ in the following way. Given $\omega \in \Sigma$ we extract a subsequence $(n_q)_{q \in \N}$ satisfying,
\begin{enumerate}\vspace{3mm}
\item[(i)] $ \displaystyle\lim_{q\rightarrow \infty}\frac{\sum_{i \in \A}P_i(\omega|n_q)\log P_i(\omega|n_q)}{\sum_{i \in \A}P_i(\omega|n_q)\log b_i} =\limsup_{n\rightarrow \infty}\frac{\sum_{i \in \A}P_i(\omega|n)\log P_i(\omega|n)}{\sum_{i \in \A}P_i(\omega|n)\log b_i}$,
\vspace{3mm}
\item[(ii)] $ \displaystyle\lim_{q \rightarrow \infty}(P_i(\omega|n_q))_{i \in \A} = P(\omega)=(P_i(\omega))_{i \in \A}$,
\vspace{3mm}
\item[(iii)] $ \displaystyle\lim_{q \rightarrow \infty}(P_i(\omega|L_{n_q}(\omega)))_{i \in \A} = Q(\omega)= (Q_i(\omega))_{i \in \A}$,
\vspace{3mm}
\item[(iv)] $ \displaystyle\lim_{q\rightarrow \infty}\left(\frac{\sum_{j \in \B_i}P_{ij}(\omega|n_q) \varphi_{ij}^k}{\sum_{(i',j') \in \D}P_{i'j'}(\omega|n_q)\varphi_{i'j'}^k}\right)_{i \in \A}=R^k(\omega)=(R^k_i(\omega))_{i \in \A}$,
\end{enumerate}
for each $k=1, \cdots, K$. We let $R(\omega):=(R^k(\omega))_{k=1}^K$.
Note that by (i) we always have $d(Q(\omega))\leq d(P(\omega))$. Since $\Q(\A)$ is dense in $\Prob(\A)$ we may choose $\kappa(\omega)=(\kappa_i(\omega))_{i \in \A} \in \Q(\A)$ so that $\kappa_i(\omega)>P_i(\omega)\zeta^{\xi}$ for each $i \in \A$. We choose $\rho(\omega) \in \Q(\A)$ and  $ \lambda(\omega) \in \Q(\A)^K$  so that $||P(\omega)-\rho(\omega)||< \epsilon$ and $||R(\omega)-\lambda(\omega)||_{\infty}< \epsilon$. Let $F_{\xi}(\omega):=(\rho(\omega), \kappa(\omega), \lambda(\omega))$.

Define, 
\begin{eqnarray*}
E_{\varphi}^{(\rho, \kappa, \lambda)}(\Gamma)&:=&E_{\varphi}(\Gamma)\cap F_{\xi}^{-1}(\rho,\kappa, \lambda),\\
J_{\varphi}^{(\rho,\kappa, \lambda)}(\Gamma)&:=&\Pi(E_{\varphi}^{(\rho, \kappa, \lambda)}(\Gamma)).
\end{eqnarray*}
Since $\Q(\A)^{2+K}$ is countable, to show that 
\begin{equation*}
\dim J_{\varphi}(\Gamma)\leq \delta(\Gamma)+ 6\xi,
\end{equation*}
it suffices to fix $(\rho, \kappa, \lambda)\in \Q(\A)^{2+K}$ and show that 
\begin{equation*}
\dim J_{\varphi}^{(\rho,\kappa,\lambda)}(\Gamma)\leq \delta(\Gamma)+ 6\xi.
\end{equation*}

By the definition of $s_{\epsilon}(\Gamma,\rho, \lambda)$ we may take $N(\epsilon)\in \N$ so that for all $n\geq N(\epsilon)$ we have
\begin{equation}
\sum_{(\vartheta^i)_{i \in \A} \in \B^{n,\epsilon}(\Gamma, \rho, \lambda)} \prod_{i \in \A}a_{\vartheta^i}^{s_{\epsilon}(\Gamma,\rho, \lambda)+\xi}  < 1,
\end{equation}
and hence,
\begin{equation}
\sum_{(\vartheta^i)_{i \in \A} \in \B^{n,\epsilon}(\Gamma, \rho, \lambda)} \prod_{i \in \A}a_{\vartheta^i}^{s_{\epsilon}(\Gamma,\rho, \lambda)+2\xi}  < \zeta^{n\xi}.
\end{equation}

Given $\omega \in E_{\varphi}^{(\rho, \kappa, \lambda)}(\Gamma)$ we have,
\begin{eqnarray*}
\lim_{q \rightarrow \infty} \frac{\sum_{i \in \A}P_i(\omega|L_{n_q}(\omega))\log \rho_i }{\sum_{i \in \A}P_i(\omega|L_{n_q}(\omega))\log b_i }& \leq & 
\lim_{q \rightarrow \infty} \frac{\sum_{i \in \A}P_i(\omega|L_{n_q}(\omega))\log P_i(\omega|L_{n_q}(\omega))}{\sum_{i \in \A}P_i(\omega|L_{n_q}(\omega))\log b_i }+\xi\\
&\leq & d(Q(\omega)) +\xi \leq  d(P(\omega))+\xi\\
&<& d(\rho)+2\xi.
\end{eqnarray*}
Thus, for all sufficiently large $q$ we have,
\begin{eqnarray*}
\diam(B_{n_q}(\omega))^{d(\rho)+3 \xi} &\leq & b_{\min}^{-d(\rho)-2\xi}\left(b_{i_1}\cdots b_{i_{L_n(\omega)}}\right)^{d(\rho)+2\xi}\\ 
&\leq& b_{\min}^{-d(\rho)-2\xi}\left(\rho_{i_1}\cdots \rho_{i_{L_{n_q}(\omega)}}\right)\zeta^{L_{n_q}(\omega)\xi}.
\end{eqnarray*}
We also have,
\begin{eqnarray*}
\diam(B_{n_q}(\omega)) &\leq &  \prod_{\nu=1}^{n_q} a_{i_{\nu} j_{\nu}}
\leq \prod_{i \in \A}a_{\phi^i(\omega|n_q)}.
\end{eqnarray*}
Moreover, by (2) and (4) we also have $\phi^i(\omega|n_q) \in \B^{n_q,\epsilon}_{i}(\Gamma, \rho, \lambda)$ for each $i \in \A$ and hence $(\phi^i(\omega|n_q))_{i \in \A} \in \B^{n_q,\epsilon}(\Gamma, \rho, \lambda)$ for all sufficiently large $q$. 

Thus, if we fix some $r>0$, then for each $\omega \in E_{\varphi}^{(\rho, \kappa, \lambda)}(\Gamma)$ we may take some $n(\omega)\geq N(\epsilon)$ so that,
\begin{enumerate}
\vspace{2mm}
\item[(i)] $\Pi(\omega) \in B_{n(\omega)}(\omega)$,
\vspace{4mm}
\item[(ii)] $\diam(B_{n(\omega)}(\omega))\leq \gamma$,
\vspace{4mm}
\item[(iii)] $\diam(B_{n(\omega)}(\omega))^{d(\rho)+3 \xi} \leq b_{\min}^{-d(\rho)-2\xi}\left(\rho_{i_1}\cdots \rho_{i_{L_{n(\omega)}(\omega)}}\right)\zeta^{L_{n(\omega)}(\omega)\xi},$
\vspace{4mm}
\item[(iv)] $\diam(B_{n_q}(\omega))^{s_{\epsilon}(\Gamma,\rho, \lambda)+2\xi} \leq \prod_{i \in \A}a_{\phi^i(\omega|n_q)}^{s_{\epsilon}(\Gamma,\rho, \lambda)+2\xi},$
\vspace{4mm}
\item[(v)] $(\phi^i(\omega|n(\omega)))_{i \in \A} \in \B^{n(\omega),\epsilon}(\Gamma, \rho, \lambda)$. 
\vspace{2mm}
\end{enumerate}
Let $\mathcal{B}_{r}:=\left\lbrace B_{n(\omega)}(\omega): \omega \in E_{\varphi}^{(\rho, \kappa, \lambda)}(\Gamma)\right\rbrace$. By (i) and (ii) above, $\mathcal{B}_r$ forms a countable $r$-cover of $J_{\varphi}^{(\rho, \kappa, \lambda)}(\Gamma)$. 

Note also that given $\omega^1=((i^1_{\nu},j^1_{\nu}))_{\nu\in \N}, \omega^2=((i^2_{\nu},j^2_{\nu}))_{\nu\in \N} \in  E_{\varphi}^{(\rho, \kappa, \lambda)}(\Gamma)$ with $(i^1_1, \cdots, i^1_{L_{n(\omega^1)}(\omega)})=(i^1_2, \cdots, i^2_{L_{n(\omega^2)}(\omega)})$ and $(\phi^i(\omega^1|n_(\omega)))_{i \in \A}=(\phi^i(\omega^2|n_(\omega^2)))_{i \in \A}$ we must have $B_{n(\omega^1)}(\omega^1)=B_{n(\omega^2)}(\omega^2)$. Hence,
\begin{eqnarray*}
&& \sum_{B \in \mathcal{B}_{\gamma}}\diam(B)^{s_{\epsilon}(\Gamma,\rho, \lambda)+d(\rho)+5\xi},\\
&\leq& b_{\min}^{-d(\rho)-2\xi} \sum_{l \in \N} \zeta^{l\xi}\left(\sum_{(i_1,\cdots, i_l)\in \A^l} \rho_{i_1}\cdots \rho_{i_l}\right) \times \left( \sum_{n \geq N(\epsilon)} \sum_{(\vartheta^i)_{i \in \A} \in \B^{n,\epsilon}(\Gamma, \rho, \lambda)} \prod_{i \in \A}a_{\vartheta^i}^{s_{\epsilon}(\Gamma,\rho, \lambda)+2\xi}\right),\\
&\leq & b_{\min}^{-d(\rho)-2\xi} \sum_{l \in \N} \zeta^{l\xi} \times \sum_{n \geq N(\epsilon)} \zeta^{n\xi}< \infty. 
\end{eqnarray*}
Letting $\gamma \rightarrow 0$ we have that
\begin{eqnarray*}
\dim J_{\varphi}^{(\rho,\kappa,\lambda)}(\Gamma) &\leq& s_{\epsilon}(\Gamma,\rho, \lambda)+d(\rho)+5\xi,\\
& \leq & \delta_{\epsilon}(\Gamma)+ 5 \xi,\\
& \leq & \delta(\Gamma)+ 6 \xi,
\end{eqnarray*}
by our choice of $\epsilon$. Since $\Q(\A)^{2+K}$ is countable, it follows that 
\begin{equation*}
\dim J_{\varphi}(\Gamma)\leq \delta(\Gamma)+ 6\xi.
\end{equation*}
Letting $\xi\rightarrow 0$ proves the lemma.
\end{proof}

\subsection{Constructing a Measure}

Define $\A^{n,\epsilon}(\Gamma, \rho)\subseteq \A^{\lceil (1+2\epsilon)n\rceil}$ by,
\begin{eqnarray*}
\A^{n,\epsilon}(\Gamma, \rho):= \left\lbrace \tau \in  \A^{\lceil (1+2\epsilon)n\rceil}: N_i(\tau)\geq (1+\epsilon)\rho_i n \text{ for each } i \in \A  \right\rbrace.
\end{eqnarray*}

\begin{lemma} \label{V prob lemma} Given $\rho \in \Prob(\A)$ there exists $M(\rho, \epsilon) \in \N$ such that for all $n \geq M(\rho, \epsilon)$ we have,
$P^{n,\epsilon}(\Gamma, \rho):= \sum_{\tau \in \A^{n,\epsilon}(\Gamma, \rho)} \rho_{\tau} >1-\epsilon$.
\end{lemma}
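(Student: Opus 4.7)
The plan is to read the statement as a quantitative law of large numbers for the Bernoulli product measure $\mu_\rho$ on $\Sigma_v = \A^{\N}$. Set $m := \lceil (1+2\epsilon)n \rceil$. Since $\sum_{\tau \in \A^m}\rho_\tau = 1$, the quantity $P^{n,\epsilon}(\Gamma,\rho)$ is exactly the $\mu_\rho$-probability that a random $\tau$ of length $m$ satisfies $N_i(\tau) \geq (1+\epsilon)\rho_i n$ for every $i \in \A$. Under $\mu_\rho$ the expectation of $N_i$ is $m\rho_i \geq (1+2\epsilon)n\rho_i$, which exceeds the required threshold $(1+\epsilon)n\rho_i$ by $\epsilon n \rho_i$, so the event ought to be overwhelmingly typical.

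First, I would fix $i \in \A$ with $\rho_i > 0$ and control the bad event $\{N_i(\tau) < (1+\epsilon)\rho_i n\}$ via Chebyshev's inequality. Since $N_i(\tau) = \sum_{l=1}^m \one_{\{\tau_l = i\}}$ is a sum of $m$ i.i.d.\ Bernoulli$(\rho_i)$ variables with variance at most $m\rho_i$, Chebyshev yields
\[
\mu_\rho\{\tau \in \A^m : N_i(\tau) < (1+\epsilon)\rho_i n\} \;\leq\; \frac{m\rho_i}{(\epsilon n \rho_i)^2} \;\leq\; \frac{1+2\epsilon}{\epsilon^2 \rho_i n}.
\]
Next, the indices $i$ with $\rho_i = 0$ require no treatment: the constraint $N_i \geq 0$ is automatic, and any $\tau$ containing such a digit has weight $\rho_\tau = 0$ so contributes nothing to either side. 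Finally, a union bound over the finitely many $i \in \A$ with $\rho_i > 0$ gives a total failure probability at most $|\A|(1+2\epsilon)/(\epsilon^2\, n\, \min\{\rho_i : \rho_i > 0\})$, which falls below $\epsilon$ for all $n \geq M(\rho,\epsilon)$ once $M$ is chosen large enough.

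The step that makes the whole argument work is the final union bound, and it depends crucially on the finiteness of $\A$ built into the definition of an INC system (the horizontal family $\{g_i : i \in \A\}$ is required to be a \emph{finite} interval IFS). There is no substantive obstacle here; the only point worth flagging is that $M(\rho,\epsilon)$ genuinely depends on $\rho$ through $\min\{\rho_i : \rho_i > 0\}$, which is exactly what the statement of the lemma permits. Hoeffding's inequality could be substituted for Chebyshev to obtain sharper constants, but it is not needed since only the asymptotic behaviour as $n \to \infty$ is required.
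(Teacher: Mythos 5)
Your proof is correct, and it takes a genuinely different route from the paper's. The paper dispatches this lemma with a soft argument: apply Kolmogorov's strong law of large numbers on the infinite product space $(\A^{\N},\mu_{\rho})$ to get $N_i(\tau|m)/m\to\rho_i$ almost surely for each $i\in\A$, then Egorov's theorem to make this convergence uniform on a set of $\mu_{\rho}$-measure greater than $1-\epsilon$; since $N_i(\tau|m)$ depends only on the first $m=\lceil(1+2\epsilon)n\rceil$ coordinates and the target threshold $(1+\epsilon)\rho_i n$ sits a margin $\epsilon\rho_i n$ below the mean $m\rho_i$, every cylinder of length $m$ meeting that set eventually lies over $\A^{n,\epsilon}(\Gamma,\rho)$, giving the claim. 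You replace this by a finitary weak-law argument: Chebyshev for each digit with $\rho_i>0$, the trivial observation for $\rho_i=0$, and a union bound over the finite alphabet $\A$. What your version buys is an explicit, quantitative $M(\rho,\epsilon)$ (of order $\#\A/(\epsilon^{3}\min\{\rho_i:\rho_i>0\})$) and no detour through the infinite product space or Egorov; what the paper's version buys is a two-line proof with no constants to track. Both hinge on the finiteness of $\A$, which you rightly flag as coming from the definition of an INC system. The only blemish in your write-up is cosmetic: $m=\lceil(1+2\epsilon)n\rceil$ may exceed $(1+2\epsilon)n$ by up to $1$, so your displayed bound should read, say, $\bigl((1+2\epsilon)n+1\bigr)/(\epsilon^{2}\rho_i n^{2})$ rather than $(1+2\epsilon)/(\epsilon^{2}\rho_i n)$; this changes nothing, since only the $O(1/n)$ decay is used.
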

\begin{proof} Apply Kolmogorov's strong law of large numbers and then Egorov's theorem.
\end{proof}

\begin{lemma}[Constructing a Measure]\label{construct a measure} 
\[\delta(\Gamma) \leq \lim_{\xi \rightarrow 0}\left\lbrace \tilde{D}_q(\mu): q \in \N, \mu \in \E_{\sigma^q}^*(\Sigma), \int A_q(\varphi) d\mu \in [\gamma_{\min}-\xi, \gamma_{\max}+\xi]\right\rbrace.\]
\end{lemma}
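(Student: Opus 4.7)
The plan is to show that for every $\rho \in \Q(\A)$, $\lambda \in \Q(\A)^K$ and $\epsilon > 0$, one can construct a compactly supported $\sigma^N$-ergodic measure $\mu$ on $\Sigma$ satisfying $\tilde D_N(\mu) \geq s_\epsilon(\Gamma,\rho,\lambda) + d(\rho) - O(\eta)$ and $\int A_N(\varphi)\,d\mu \in [\gamma_{\min} - O(\epsilon),\gamma_{\max} + O(\epsilon)]$, where $\eta > 0$ may be chosen freely. Taking the supremum over $(\rho,\lambda)$ and letting $\eta,\epsilon \to 0$ then yields the claimed upper bound on $\delta(\Gamma)$. The measure will be an $N$-block Bernoulli with $N := \lceil (1+2\epsilon) n \rceil$, where $n$ is selected along a subsequence realizing the $\limsup$ in the definition of $s_\epsilon(\Gamma,\rho,\lambda)$.

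\textbf{Construction of $\mu$.} Fix $n$ large, set $s := s_{n,\epsilon}(\Gamma,\rho,\lambda) - \eta$, and note $Z_n := \sum_{(\vartheta^i)\in\B^{n,\epsilon}(\Gamma,\rho,\lambda)} \prod_i a_{\vartheta^i}^s \geq 1$. Let $\hat\nu$ denote the probability measure on $\B^{n,\epsilon}(\Gamma,\rho,\lambda)$ with weights $\hat\nu((\vartheta^i)) := Z_n^{-1}\prod_i a_{\vartheta^i}^s$, and let $\mu_v$ denote $\rho^{\otimes N}$ restricted to $\A^{n,\epsilon}(\Gamma,\rho)$ and renormalized; by Lemma \ref{V prob lemma} this differs from $\rho^{\otimes N}$ in total variation by less than $\epsilon$. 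For each $\tau \in \A^{n,\epsilon}(\Gamma,\rho)$, let $\hat\nu_\tau$ be the conditioning of $\hat\nu$ on $|\vartheta^i| = N_i(\tau)$ for every $i \in \A$. Define the $N$-block weight of $\omega \in \D^N$ to be $\mu_v(\pi(\omega))\cdot \hat\nu_{\pi(\omega)}((\phi^i(\omega))_{i\in\A})$ and let $\mu$ be the $N$-block Bernoulli extension to $\Sigma$. Compact support is guaranteed by first truncating each $\B_i$ to a finite subset; absolute convergence of $\sum_{\vartheta \in \B^{n,\epsilon}_i} a_\vartheta^{s}$ under UCC makes the resulting perturbation of $s$ at most $O(\eta)$.

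\textbf{Verification.} Being an $N$-block Bernoulli, $\mu$ is automatically $\sigma^N$-ergodic, and its entropies are block Shannon entropies. The vertical entropy $h_{\mu\circ\pi^{-1}}(\sigma_v^N)$ coincides with the Shannon entropy of $\mu_v$, which lies within $O(\epsilon N)$ of $-N\sum_i\rho_i\log\rho_i$, while $\int S_N(\tilde\psi)\,d\mu\circ\pi_v^{-1}$ is within $O(\epsilon N)$ of $-N\sum_i\rho_i\log b_i$; their ratio therefore approximates $d(\rho)$ to within $O(\epsilon)$. The conditional entropy satisfies $h_\mu(\sigma^N\mid\pi^{-1}\mathscr A) = \mathbb{E}_{\tau\sim\mu_v}[H(\hat\nu_\tau)]$, which, by direct expansion through the weight formula, equals $\log Z_n + \mathbb{E}_\tau[\log Z_\tau] + s\cdot\mathbb{E}_{\hat\nu}[-\log\prod_i a_{\vartheta^i}]$, where $Z_\tau$ is the conditional normalizing mass. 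Under UCC, $\prod_i a_{\vartheta^i} \leq \xi^n$, so $\log Z_n$ and $\log Z_\tau$ are $O(n\eta)$, whereas $\int S_N(\tilde\chi)\,d\mu = \mathbb{E}_{\hat\nu}[-\log\prod_i a_{\vartheta^i}]$ is at least $n|\log\xi|$; the quotient is thus $s + O(\eta) = s_\epsilon(\Gamma,\rho,\lambda) + O(\eta)$. Finally, the membership $\sum_\nu \varphi^k_{ij_\nu} \in \lambda^k_i\Gamma \pm\epsilon n$ for each $\vartheta^i \in \B^{n,\epsilon}_i$, combined with $\sum_i\lambda^k_i = 1$, pins $\int A_N(\varphi^k)\,d\mu$ to within $O(\epsilon)$ of $\Gamma_k$.

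\textbf{Main obstacle and conclusion.} The main technical difficulty is controlling $H(\hat\nu_\tau)$ uniformly in $\tau$: because the conditional normalization $Z_\tau$ varies with the frequency vector $(N_i(\tau))_{i\in\A}$, the entropy expansion above requires showing that, for $\mu_v$-typical $\tau$, $\log Z_\tau$ is negligible compared to the leading exponent $sN\mathbb{E}_{\hat\nu}[-\log\prod_i a_{\vartheta^i}]$. This is handled by a large-deviations estimate based on the concentration of the type $(N_i(\tau))$ around $(\rho_i N)$, which is precisely what Lemma \ref{V prob lemma} supplies, combined with the fact that the contribution of any $\vartheta^i$ to $Z_n$ differs only polynomially across nearby length profiles. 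Once this calibration is in hand, sending $\eta\to 0$ (with a compatible choice of $n$), then $\epsilon \to 0$ and maximizing over $(\rho,\lambda) \in \Q(\A)\times\Q(\A)^K$, gives $\delta(\Gamma) \leq \lim_{\xi\to 0}\sup\{\tilde D_q(\mu): q \in \N, \mu \in \E_{\sigma^q}^*(\Sigma), \int A_q(\varphi)\,d\mu \in [\gamma_{\min}-\xi,\gamma_{\max}+\xi]\}$.
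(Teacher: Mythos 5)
Your construction breaks at its central step. You define $\hat{\nu}_\tau$ by conditioning $\hat{\nu}$ on the event $\{|\vartheta^i| = N_i(\tau) \text{ for all } i \in \A\}$ for $\tau \in \A^{n,\epsilon}(\Gamma,\rho) \subseteq \A^{\lceil (1+2\epsilon)n\rceil}$; but every $(\vartheta^i)_{i \in \A} \in \B^{n,\epsilon}(\Gamma,\rho,\lambda)$ satisfies $\sum_{i \in \A}|\vartheta^i| = n$, whereas $\sum_{i \in \A} N_i(\tau) = \lceil (1+2\epsilon)n\rceil > n$. The conditioning event is therefore empty for every $\tau$, so no measure is produced and the block weights $\mu_v(\pi(\omega))\cdot\hat{\nu}_{\pi(\omega)}((\phi^i(\omega))_i)$ are undefined. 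The missing idea is precisely the device the paper uses to bridge this length mismatch: fix a filler digit $j_*^i \in \B_i$ for each $i$, extract a finite subfamily $\F^{n,\epsilon}(\Gamma,\rho,\lambda)$ with $\sum \prod_i a_{\vartheta_i}^s = 1$, and use the injection $\eta(\tau,(\vartheta_i))$ that threads $\vartheta_i$ into the $i$-positions of $\tau$ and pads the surplus ($\approx \epsilon \rho_i n$ positions, guaranteed by $N_i(\tau)\geq(1+\epsilon)\rho_i n$) with $j_*^i$. With padding, every good $\tau$ carries the same family with the same weights summing to one, so the fibre entropy is computed exactly; the vertical marginal can be the full $\rho$-Bernoulli measure (bad $\tau$ receive the all-filler string), giving the ratio $d(\rho)$ with no error; and the distortion of the Lyapunov and Birkhoff sums caused by padding is bounded by $3\#\A\, n\epsilon(-\log a_*)$ and $3\epsilon n\varphi_*$, constants fixed once the fillers are fixed. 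Your substitute — a $\tau$-dependent conditional normalization $Z_\tau$ — is exactly what this design avoids, and the estimate you invoke to tame it, that the mass of $\hat{\nu}$ ``differs only polynomially across nearby length profiles,'' is unsupported: for countable $\B_i$ the restricted partition sums over length profiles differing by $\Theta(\epsilon n)$ can differ by factors exponential in $n$, so typicality of the type of $\tau$ (Lemma \ref{V prob lemma}) does not by itself make $\log Z_\tau$ negligible. Moreover, since the potentials are only bounded on one side, the digits occupying the extra $\approx 2\epsilon n$ positions must contribute $O(\epsilon n)$ to each Birkhoff sum, which is automatic for fixed fillers (cost $\leq 3\epsilon n \varphi_*$) but not for digits produced by conditioning.

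Two secondary points. First, the claim that $\sum_{\vartheta \in \B_i^{n,\epsilon}} a_\vartheta^{s}$ converges ``under UCC'' is unjustified: uniform contraction gives no summability at exponents $s<1$; the paper instead gets finiteness by choosing a finite subfamily whose sum exceeds $1$ at the lower exponent and raising $s$ until the finite sum equals $1$, which also delivers compact support without perturbing $s$ downward. Second, restricting $\rho^{\otimes N}$ to $\A^{n,\epsilon}(\Gamma,\rho)$ and renormalizing costs vertical entropy; the naive bound for this loss is $\epsilon N \max_i(-\log\rho_i)$, which is not uniform in $\rho$ — and $\rho$ is chosen after $\epsilon$, so this does not vanish as $\epsilon \to 0$ without a further argument (a conditional-entropy decomposition gives the uniform bound $\epsilon N\log\#\A + \log 2$, but you would need to say so); the paper sidesteps the issue entirely by never conditioning the vertical marginal.
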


\begin{proof}
We begin by fixing some $j_*^i \in \B_i$ for each $i \in \A$. We then let $a_*:=\min\left\lbrace a_{ij_*^i}: i \in \A \right\rbrace$ and $\varphi_*:=\max\left\lbrace \varphi^k_{ij_*^i}: i \in \A, k \leq K \right\rbrace$. In what follows we shall let $o(\epsilon)$ denote any quantity which depends only upon the observable $\varphi$, the iterated function system, our choice of $(j_*^i)_{i \in \A}$ and $\epsilon$, which tends to zero as $\epsilon$ tends to zero. Of course, the precise value of $o(\epsilon)$ will vary from line to line.

Take $\xi>0$. Then there exists $\epsilon_0(\xi)$ such that for all $\epsilon \leq \epsilon_0(\xi)<1/2$ we have $\delta_{\epsilon}(\Gamma)>\delta(\Gamma)-\xi$. Take $\epsilon \leq \epsilon_0(\xi)$. Then there exists $\rho \in \Q(\A)$ $\lambda \in \Q(\A)^K$ with $s(\Gamma, \rho,\lambda)+d(\rho)>\delta(\Gamma)-\xi$.

Consequently, there exists infinitely many $n\in\N$ for which  
\begin{eqnarray}\label{pre s prob measure}
\sum_{(\vartheta^i)_{i \in \A} \in \B^{n,\epsilon}(\Gamma, \rho, \lambda)} \prod_{i \in \A}a_{\vartheta^i}^{\delta(\Gamma)-d(\rho)-\xi}>1.
\end{eqnarray}
In particular we may apply Lemma \ref{V prob lemma} and take some such $n \geq M(\rho, \epsilon)$, so that $P^{n,\epsilon}(\Gamma, \rho)>1-\epsilon$. By (\ref{pre s prob measure}) there exists a finite subset $\F^{n,\epsilon}(\Gamma, \rho, \lambda) \subseteq \B^{n,\epsilon}(\Gamma, \rho, \lambda)$ and $s>\delta(\Gamma)-d(\rho)-\xi$ for which
\begin{eqnarray} \label{s prob measure}
\sum_{(\vartheta^i)_{i \in \A} \in \F^{n,\epsilon}(\Gamma, \rho, \lambda)} \prod_{i \in \A}a_{\vartheta^i}^{s}=1.
\end{eqnarray}

Recall that we defined $\A^{n,\epsilon}(\Gamma, \rho)\subseteq \A^{\lceil (1+2\epsilon)n\rceil}$ by,
\begin{eqnarray*}
\A^{n,\epsilon}(\Gamma, \rho):= \left\lbrace \tau \in  \A^{\lceil (1+2\epsilon)n\rceil}: N_i(\tau)\geq (1+\epsilon)\rho_i n \text{ for each } i \in \A  \right\rbrace.
\end{eqnarray*}

We now define an injective map $\eta:\A^{n,\epsilon}(\Gamma, \rho) \times \F^{n,\epsilon}(\Gamma, \rho, \lambda)\rightarrow \D^{\lceil (1+2\epsilon)n\rceil}$ so that for all $(\tau, (\vartheta_i)_{i \in \A})\in \A^{n,\epsilon}(\Gamma, \rho) \times \F^{n,\epsilon}(\Gamma, \rho, \lambda)$ and $i \in \A$ we have $\pi(\eta(\tau, (\vartheta_i)_{i \in \A}))=\tau$ and $\vartheta_i$ is an intial segment of $\phi^i(\eta(\tau, (\vartheta_i)_{i \in \A}))$. To define $\eta$ we proceed as follows. Take $(\tau, (\vartheta_i)_{i \in \A})\in \A^{n,\epsilon}(\Gamma, \rho) \times \F^{n,\epsilon}(\Gamma, \rho, \lambda)$. For each $i \in \A$ we write $\vartheta_i=((i,j^i_1), \cdots, (i,j_{m_i}^i))$ and let $\tau=(i_1, \cdots, i_{\lceil (1+2\epsilon)n\rceil})$. Now, for each $\nu \in \{1, \cdots, {\lceil (1+2\epsilon)n\rceil}\}$ we choose $i \in \A$ so that $i=i_{\nu}$, and choose $r$ so that $\nu$ is the $r$-th occurance of the digit $i$ in $\tau$. If $r \leq m_i$ then let $\eta_{\nu}:=(i,j^i_{r})$ and if $\nu>r$ let $\eta_{\nu}=(i,j^i_*)$. Write $\eta(\tau, (\vartheta_i)_{i \in \A})=(\eta_{\nu})_{\nu =1}^n$.
Note that 
\begin{eqnarray*} \prod_{i \in \A}a_{\vartheta_i} & \geq & a_{\eta(\tau, (\vartheta_i)_{i \in \A})} \\ 
&\geq& \prod_{i \in \A}\left(a_{\vartheta_i}\times a_{i j_*^i}^{3n\epsilon} \right) \geq \left(\prod_{i \in \A}a_{\vartheta_i}\right)\times a_{*}^{3\#\A n\epsilon}.
\end{eqnarray*}
That is,
\begin{eqnarray}\label{Z ref Lyap} \log a_{\eta(\tau, (\vartheta_i)_{i \in \A})} =\log \left(\prod_{i \in \A}a_{\vartheta_i}\right) + n o(\epsilon).
\end{eqnarray}
Similarly, for each $i \in \A$ and $k \in \{1, \cdots, K\}$ we have
\begin{eqnarray*}
\rho_i(1-\epsilon) \gamma^k_{\min}n &\leq & \sum_{j \in \B_i}N_{ij}(\vartheta_i)\varphi^k_{ij},\\
&\leq &\sum_{j \in \B_i}N_{ij}(\eta(\tau, (\vartheta_i)_{i \in \A}))\varphi^k_{ij},\\
&\leq & \sum_{j \in \B_i}N_{ij}(\vartheta_i)\varphi_{ij}+ 3\epsilon n \varphi^k_{ij_*^i},\\
&\leq & \rho_i(1+\epsilon)\gamma_{\max}n + 3\epsilon n \varphi_{*}.
\end{eqnarray*}
Hence,
\begin{eqnarray}
\sum_{(i,j) \in \D}P_{ij}(\eta(\tau, (\vartheta_i)_{i \in \A}))\varphi^k_{ij} \in [\gamma^k_{\min}-o(\epsilon),\gamma^k_{\max}+o(\epsilon)],
\end{eqnarray}
for each $k=1,\cdots, K$, and so,
\begin{eqnarray}\label{Birkhoff on well behaved section}
\sum_{(i,j) \in \D}P_{ij}(\eta(\tau, (\vartheta_i)_{i \in \A}))\varphi_{ij} \in [\gamma_{\min}-o(\epsilon),\gamma_{\max}+o(\epsilon)].
\end{eqnarray}

We define a compactly supported $\lceil (1+2\epsilon)n\rceil$-level Bernoulli measure $\nu$ on $\Sigma$ in the following way. First let $\nu(\pi^{-1}[\tau]):=\rho_{\tau}$ for each $\tau \in \A^{\lceil (1+2\epsilon)n\rceil}$. Then, given $\tau \in \A^{\lceil (1+2\epsilon)n\rceil}$ and $\kappa \in \A^{\lceil (1+2\epsilon)n\rceil}$ with $\pi(\kappa)=\tau$ either $\tau \in \A^{n,\epsilon}(\Gamma, \rho) $ in which case we let
\begin{eqnarray*}
\frac{\nu([\kappa])}{\nu(\pi^{-1}[\tau])}:=\begin{cases}\prod_{i \in \A}a_{\vartheta_i}^s\text{ if }\kappa= \eta(\tau, (\vartheta_i)_{i \in \A})\text{ for some  }(\vartheta_i)_{i \in \A}\in \prod_{i \in \A}\F^{n,\epsilon}_{i}(\Gamma, \rho, \lambda) ,\\0\text{ if otherwise,}\end{cases}
\end{eqnarray*}
or $\tau=(i_{\nu})_{\nu=1}^{\lceil (1+2\epsilon)n\rceil} \in \A^{\lceil (1+2\epsilon)n\rceil}\backslash \A^{n,\epsilon}(\Gamma, \rho)$, in which case we let
\begin{eqnarray*}
\frac{\nu([\kappa])}{\nu(\pi^{-1}[\tau])}:=\begin{cases}1\text{ if }\kappa=((i_{\nu},j_{*}^{i_{\nu}}))_{\nu=1}^{\lceil (1+2\epsilon)n\rceil} ,\\0\text{ if otherwise.}\end{cases}
\end{eqnarray*}

Since $\nu \circ \pi$ is the $(\rho_i)_{i \in \A}$ Bernoulli measure on $\A^{\N}$ we have
\begin{eqnarray*}
\frac{ h_{\nu\circ \pi_v^{-1}}( \sigma_v^{\lceil (1+2\epsilon)n\rceil})}{\int S_{\lceil (1+2\epsilon)n\rceil}(\tilde{\psi}) d \nu \circ \pi^{-1}}&=&\frac{\sum_{\tau \in \A^{\lceil (1+2\epsilon)n\rceil}}\rho_{\tau} \log \rho_{\tau}}{\sum_{\tau \in \A^{\lceil (1+2\epsilon)n\rceil}}\rho_{\tau} \log b_{\tau}}\\&=&\frac{\sum_{i \in \A}\rho_{i} \log \rho_{i}}{\sum_{i\in \A}\rho_{i} \log b_{i}}=d(\rho).
\end{eqnarray*}
Now define
\begin{eqnarray*}
\Z^{n,\epsilon}(\Gamma, \rho, \lambda)&:=& \sum_{(\vartheta_i)_{i \in \A}\in \F^{n,\epsilon}(\Gamma, \rho, \lambda)} \prod_{i \in \A}a_{\vartheta_i}^s \log \left(\prod_{i \in \A}a_{\vartheta_i}\right),
\end{eqnarray*}
By (\ref{s prob measure}) together with the fact that $\log a_{ij} \leq \log \zeta<0$ for all $(i,j) \in \D$ we have,
\begin{eqnarray*}
\Z^{n,\epsilon}(\Gamma, \rho, \lambda)&\leq & n\log \zeta.
\end{eqnarray*}

By the definition of $\nu$ we have, 
\begin{eqnarray*}
h_{\nu}(\sigma^{\lceil (1+2\epsilon)n\rceil}|\pi^{-1}\mathscr{A})&=&\left(\sum_{\tau \in \A^{n,\epsilon}(\Gamma, \rho)} \rho_{\tau}\right)\sum_{(\vartheta_i)_{i \in \A}\in \F^{n,\epsilon}(\Gamma, \rho, \lambda)} \prod_{i \in \A}a_{\vartheta_i}^s \log \left(\prod_{i \in \A}a_{\vartheta_i}^s\right)\\
&=& P^{n,\epsilon}(\Gamma, \rho)\times s \Z^{n,\epsilon}_{i}(\Gamma, \rho, \lambda).
\end{eqnarray*}

Also, by (\ref{Z ref Lyap}) we have
\begin{eqnarray*}
\int S_{\lceil (1+2\epsilon)n\rceil}(\tilde{\chi}) d \nu  &=&  \left(\sum_{\tau \in \A^{n,\epsilon}(\Gamma, \rho)} \rho_{\tau}\right)\left(\sum_{(\vartheta_i)_{i \in \A}\in \F^{n,\epsilon}(\Gamma, \rho, \lambda)} \prod_{i \in \A}a_{\vartheta_i}^s \log \left(\prod_{i \in \A}a_{\vartheta_i}\right)-n o(\epsilon)\right)\\ & &+  \left(1-\sum_{\tau \in \A^{n,\epsilon}(\Gamma, \rho)} \rho_{\tau}\right) \lceil (1+2\epsilon)n\rceil \log a_{*} \\
&=& P^{n,\epsilon}(\Gamma, \rho)\left(\Z^{n,\epsilon}_{i}(\Gamma, \rho, \lambda)-n o(\epsilon)\right)+  \left(1-P^{n,\epsilon}(\Gamma, \rho)\right) \lceil (1+2\epsilon)n\rceil \log a_{*}.
\end{eqnarray*}
Since $n \geq M(\rho, \epsilon)$ we have $P^{n,\epsilon}(\Gamma, \rho)>1-\epsilon$  and consequently, 
\begin{eqnarray*}
\frac{h_{\nu}(\sigma^{\lceil (1+2\epsilon)n\rceil}|\pi^{-1}\mathscr{A})}{\int S_{\lceil (1+2\epsilon)n\rceil}(\tilde{\chi}) d \nu}\geq \frac{s}{1-o(\epsilon)}.
\end{eqnarray*}
Combining this with the fact that $s+d(\rho)>\delta(\Gamma)-\xi$ we have,
\begin{eqnarray}\label{Dimension estimate for nu}
\tilde{D}_{\lceil (1+2\epsilon)n\rceil}(\nu) &\geq & \frac{s}{1-o(\epsilon)}+d(\rho)\\
\nonumber &\geq & \frac{s+d(\rho)}{1-o(\epsilon)} \\
\nonumber & > & \frac{\delta(\Gamma)-\xi}{1-o(\epsilon)}.
\end{eqnarray}
Moreover, by the construction of $\nu$ combined with (\ref{Birkhoff on well behaved section}) we have,
\begin{eqnarray} \label{Lower Birkhoff bound nu}
\int A_{\lceil (1+2\epsilon)n\rceil}(\varphi) d\nu &\geq &  P^{n,\epsilon}(\Gamma, \rho)(\gamma_{\min}-o(\epsilon))\\
\nonumber &\geq &(1-\epsilon)(\gamma_{\min}-o(\epsilon)).
\end{eqnarray}
Similarly,
\begin{eqnarray}\label{Upper Birkhoff bound nu}
\int A_{\lceil (1+2\epsilon)n\rceil}(\varphi) d\nu &\leq &  P^{n,\epsilon}(\Gamma, \rho)(\gamma_{\max}+o(\epsilon))\\ \nonumber && \hspace{.5cm}+ \left(1- P^{n,\epsilon}(\Gamma, \rho)\right) \varphi_* \\
\nonumber &\leq & \gamma_{\max}+o(\epsilon).
\end{eqnarray}
Since we can obtain such a measure $\mu$ for all $\epsilon \leq \epsilon_0(\xi)$, the lemma follows by taking $\epsilon$ sufficiently small.
\end{proof}

\section{Approximation Arguments}\label{Proof UB}

In this section we apply Proposition \ref{locally constant prop} to obtain upper estimates of increasing generality until we obtain the upper bound in Theorem \ref{Main Countable}.

We begin by dropping the assumption that our potentials $\varphi$ are locally constant. Instead we assume that we have finitely many potentials $\varphi_1, \cdots, \varphi_K$, with finite first level variation $\var_1(\varphi_k)<\infty$, for each $k=1,\cdots, K$. We retain the assumption that for some $\zeta \in (0,1)$ we have $\sup_{x \in I}|g'_{i}(x)|\leq \zeta$ for each $i \in \A$ and also assume that $\var_1(\chi), \var_1(\psi)< - \log \zeta$. We define \begin{eqnarray*} C_{\sigma}(\chi, \psi):&=& \max \left\lbrace \frac{-\log \zeta}{-\log \zeta -\var_1(\chi)}, \frac{-\log \zeta}{-\log \zeta -\var_1(\psi)}\right\rbrace.
\end{eqnarray*} Proposition \ref{locally constant prop} gives the following estimate.

\begin{lemma}\label{approx lemma1} Suppose we have finitely many potentials $\varphi_1, \cdots, \varphi_M$, with $\var_1(\varphi_k)<\infty$. Suppose also that for some $\zeta \in (0,1)$ we have $\sup_{x \in I}|g'_{i}(x)|\leq \zeta$ for each $i \in \A$ and that $\var_1(\chi), \var_1(\psi)< - \log \zeta$. Suppose that $\alpha=(\alpha_k)_{k=1}^M$ is such that for all $k \leq K \leq M$ we have $\alpha_k \in \R$ and for $K < k \leq M$, $\alpha_k= \infty$. Then given any $m \in \N$,
\begin{eqnarray*}
\dim J_{\varphi}(\alpha) &\leq& C_{\sigma}(\chi, \psi) \sup \left\lbrace D_q(\mu)\right\rbrace,
\end{eqnarray*}
where the supremum is taken over all $\mu \in \E_{\sigma^q}^*(\Sigma)$ for some $q \in \N$ with $| \int A_q(\varphi_k) d\mu - \alpha_k |<  3\var_1(\varphi_k)$ for $k \leq K$ and $\int A_q(\varphi_k) d\mu>m$ for $K < k \leq M$.
\end{lemma}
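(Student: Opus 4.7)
The plan is to reduce Lemma \ref{approx lemma1} to Proposition \ref{locally constant prop} by approximating each $\varphi_k$ by a locally constant potential, and then translating the resulting $\tilde D_q$-bound into a bound on $D_q$. Fix an auxiliary point $\tau^* \in \Sigma$ and, for each $k=1,\ldots, M$, set $\hat\varphi_k(\omega) := \varphi_k(\omega_1 \tau^*_2 \tau^*_3\cdots)$, which depends only on $\omega_1 \in \D$. The hypothesis $\var_1(\varphi_k) < \infty$ gives the uniform estimate $|\varphi_k(\omega) - \hat\varphi_k(\omega)| \leq \var_1(\varphi_k)$, and hence $|A_n(\varphi_k) - A_n(\hat\varphi_k)| \leq \var_1(\varphi_k)$ pointwise. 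Define $\Gamma \subseteq (\R\cup\{+\infty\})^M$ by $\Gamma_k := [\alpha_k - \var_1(\varphi_k), \alpha_k + \var_1(\varphi_k)]$ for $k \leq K$ and $\Gamma_k := [m + \var_1(\varphi_k) + 1, +\infty]$ for $K < k \leq M$. A short check on accumulation points then gives $E_\varphi(\alpha) \subseteq E_{\hat\varphi}(\Gamma)$, so $\dim J_\varphi(\alpha) \leq \dim J_{\hat\varphi}(\Gamma)$.

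Next, apply Proposition \ref{locally constant prop} to the locally constant family $(\hat\varphi_k)_{k=1}^M$ and the rectangle $\Gamma$. The positivity normalization $\hat\varphi_k \geq 1$ imposed in Section \ref{locally constant} plays no essential role in the proofs of Lemmas \ref{Building a Cover} and \ref{construct a measure}, so the proposition may be invoked for arbitrary locally constant potentials of finite variation; shifting $\hat\varphi_k$ and $\Gamma_k$ by a common constant restores the normalization if one wishes to be pedantic, an operation that is harmless because the extremal measures produced are compactly supported and therefore see $\hat\varphi_k$ only on a finite sub-alphabet of $\D$. For every $\xi>0$ the proposition then delivers an integer $q$ and a measure $\mu \in \E_{\sigma^q}^*(\Sigma)$ with $\int A_q(\hat\varphi_k) d\mu \in [\gamma_k^{\min} - \xi, \gamma_k^{\max} + \xi]$ and $\tilde D_q(\mu) \geq \dim J_{\hat\varphi}(\Gamma) - \xi$. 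Combining with the uniform approximation bound gives $|\int A_q(\varphi_k) d\mu - \alpha_k| \leq 2\var_1(\varphi_k) + \xi < 3\var_1(\varphi_k)$ for $k \leq K$ and $\int A_q(\varphi_k) d\mu > m$ for $K < k \leq M$, once $\xi$ is sufficiently small.

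Finally, convert $\tilde D_q(\mu)$ into $D_q(\mu)$. Since $|f'_{\omega_1}(x)| \leq a_{\omega_1}$ pointwise one has $\tilde\chi \leq \chi$, while $|\chi - \tilde\chi| \leq \var_1(\chi)$; together with the INC inequality $a_{ij} \leq b_i \leq \zeta$ this gives $\chi \geq -\log\zeta > 0$. Consequently
\[
\int S_q(\tilde\chi) d\mu \geq \left(1 - \frac{\var_1(\chi)}{-\log\zeta}\right) \int S_q(\chi) d\mu > 0,
\]
the factor being positive by the hypothesis $\var_1(\chi) < -\log\zeta$, and the same reasoning applies with $\tilde\psi$ in place of $\tilde\chi$. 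Thus each of the two summands defining $\tilde D_q(\mu)$ is at most $C_\sigma(\chi,\psi)$ times the corresponding summand of $D_q(\mu)$, so $\tilde D_q(\mu) \leq C_\sigma(\chi,\psi) D_q(\mu)$. Chaining the three reductions and letting $\xi \to 0$ yields the claimed bound. The main delicacy is coordinating the two independent approximation errors---one of size $\var_1(\varphi_k)$ introduced by the locally constant replacement, and one controlling the Lyapunov exponents, absorbed into the factor $C_\sigma(\chi,\psi)$---and the hypothesis $\var_1(\chi), \var_1(\psi) < -\log\zeta$ is precisely what keeps the latter factor finite.
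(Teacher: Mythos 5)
Your argument is correct and follows essentially the same route as the paper: replace each $\varphi_k$ by a locally constant approximation within $\var_1(\varphi_k)$ (the paper takes the supremum over the $1$-cylinder rather than evaluation along a fixed tail, which is immaterial), show $J_\varphi(\alpha)\subseteq J_{\tilde\varphi}(\Gamma)$ for a suitable rectangle $\Gamma$, invoke Proposition \ref{locally constant prop}, and then pass from $\tilde D_q$ to $D_q$ via exactly the same estimate $\int S_q(\chi)\,d\mu / \int S_q(\tilde\chi)\,d\mu \le (-\log\zeta)/(-\log\zeta-\var_1(\chi)) \le C_\sigma(\chi,\psi)$, using $\chi,\psi\ge-\log\zeta$. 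Your aside about the ``bounded below by $1$'' normalization in Section \ref{locally constant} is a reasonable remark (the paper applies the proposition without comment), and it does not affect the comparison.
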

\begin{proof}
For each $k = 1, \cdots, K$ we define a locally constant potential $\tilde{\varphi}^k$ by \begin{eqnarray}
\tilde{\varphi}_k(\omega):=\sup \left\lbrace \varphi_k(\omega'): \omega_1=\omega_1'\right\rbrace,
\end{eqnarray}
for all $\omega=(\omega_{\nu})_{\nu \in \N}\in \Sigma$. It follows that $|| \varphi_k -\tilde{\varphi_k}||_{\infty}<\var_1(\varphi_k)$. Thus, for all $\omega \in E_{\varphi}(\alpha)$ we have $\Omega(A_n(\tilde{\varphi}_k))\subseteq [\alpha_k - \var_1(\varphi_k), \alpha_k + \var_1(\varphi_k)]$ for $k \leq K$ and $\Omega(A_n(\tilde{\varphi}_k))= \{\infty\}$ for $K < k \leq M$, since $\lim_{n \rightarrow \infty}A_n(\varphi_k)(\omega)= \alpha_k$ for all $k \leq M$. Hence, $J_{\varphi}(\alpha)\subseteq J_{\tilde{\varphi}}(\Gamma) $ where $\Gamma:= \prod_{k=1}^K  [\alpha_k - \var_1(\varphi_k), \alpha_k + \var_1(\varphi_k)] \times \prod_{k=K+1}^M [m+2\var_1(\varphi_k), \infty]$. Thus, by Proposition \ref{locally constant prop} we have,
\begin{eqnarray*}
\dim J_{\varphi}(\alpha)&\leq& 
\dim J_{\tilde{\varphi}}(\Gamma) \\
& \leq & \lim_{\xi \rightarrow 0} \sup \bigg\{\tilde{D}_q(\mu): q \in \N, \mu \in \E_{\sigma^q}^*(\Sigma),  \big| \int A_q(\tilde{\varphi}_k) d\mu - \alpha_k \big|< \var_1(\varphi_k)+\xi,\\ && \text{ for }k \leq K
\text{ and } \int A_q(\tilde{\varphi}_k) d\mu >m+2\var_1(\varphi_k)-\xi   \text{ for } K < k \leq M\bigg\}\\
& \leq & \sup \bigg\{\tilde{D}_q(\mu): q \in \N, \mu \in \E_{\sigma^q}^*(\Sigma),  \big| \int A_q(\tilde{\varphi}_k) d\mu - \alpha_k \big|< 2\var_1(\varphi_k),\\ && \text{ for }k \leq K
\text{ and } \int A_q(\tilde{\varphi}_k) d\mu>m+\var_1(\varphi_k)   \text{ for } K < k \leq M\bigg\}\\
& \leq & \sup \bigg\{\tilde{D}_q(\mu): q \in \N, \mu \in \E_{\sigma^q}^*(\Sigma),  \big| \int A_q(\varphi_k) d\mu - \alpha_k \big|< 3\var_1(\varphi_k),\\ && \text{ for }k \leq K
\text{ and } \int A_q(\varphi_k) d\mu>m   \text{ for } K < k \leq M\bigg\}.
\end{eqnarray*}
It is clear from the definitions of $\tilde{\chi}: \Sigma \rightarrow \R$ and $\zeta$ that $\tilde{\chi}(\omega) \geq \chi(\omega) -\var_1(\chi)(\omega)$ and $\chi(\omega) \geq - \log \zeta$  for all $\omega \in \Sigma$. Thus $\int S_q(\tilde{\chi}) d \mu \geq \int S_q(\chi) d \mu - q \var_1(\chi)>0$ for each $\mu \in \M_{\sigma^q}^*(\Sigma)$, since $\var_1(\chi)< -\log \zeta$. It follows that for each $\mu \in \M_{\sigma^q}^*(\Sigma)$ we have 
\begin{eqnarray*}
\frac{\int S_q(\chi) d \mu}{\int S_q(\tilde{\chi}) d \mu}\leq \frac{\int S_q(\chi) d \mu}{\int S_q(\chi) d \mu-q \var_1(\chi)} \leq \frac{ -\log \zeta}{-\log \zeta - \var_1(\chi)} \leq C_{\sigma}(\chi, \psi).
\end{eqnarray*}
Similarly for each 
 $\mu \in \M_{\sigma^q}^*(\Sigma)$ we have 
\begin{eqnarray*}
\frac{\int S_q(\psi) d \mu\circ \pi^{-1}}{\int S_q(\tilde{\psi}) d \mu\circ \pi^{-1}}\leq \frac{ -\log \zeta}{-\log \zeta - \var_1(\psi)} \leq C_{\sigma}(\chi, \psi).
\end{eqnarray*}
Recall that for each $\mu \in \M_{\sigma^q}^*(\Sigma)$ we defined, 
\begin{eqnarray}
D_q(\mu)&:=& \frac{h_{\mu}(\sigma^q|\pi^{-1}\mathscr{A})}{\int S_q(\chi) d \mu }+\frac{ h_{\mu\circ \pi^{-1}}( \sigma_v^q)}{\int S_q(\psi) d \mu\circ \pi^{-1}}\\
\tilde{D}_q(\mu)&:=& \frac{h_{\mu}(\sigma^q|\pi^{-1}\mathscr{A})}{\int S_q(\tilde{\chi}) d \mu }+\frac{ h_{\mu\circ \pi^{-1}}( \sigma_v^q)}{\int S_q(\tilde{\psi}) d \mu\circ \pi^{-1}}.
\end{eqnarray}
Thus, for each $\mu \in \M_{\sigma^q}^*(\Sigma)$ we have $\tilde{D}_q(\mu) \leq C_{\sigma}(\chi, \psi) D_q(\mu)$. The lemma follows.
\end{proof}
We now use the observation that an iterated N-system is itself an N-system to obtain a more refined estimate which applies in a more general situation. First recall that by the Uniform Contraction Condition, for each N system, there exists a contraction ratio $\zeta \in (0,1)$ and $N \in \N$ such that for all $n\geq N$ and all $\omega \in \D^n$ and all $\mathbf{i} \in \A^n$ we have 
\[\max\left\lbrace \sup_{x \in I}|f'_{\omega}(x)|,\sup_{x \in I}|g'_{\mathbf{i}}(x)| \right\rbrace \leq \zeta^n.\]  

For each $n \geq N$ we let \begin{eqnarray*} C^{n}_{\sigma}(\chi, \psi):&=& \max \left\lbrace \frac{-\log \zeta}{-\log \zeta -\var_n(A_n(\chi))}, \frac{-\log \zeta}{-\log \zeta -\var_n(A_n(\psi))}\right\rbrace.
\end{eqnarray*} 

\begin{lemma} \label{ Iterated INC lemma} Suppose we have finitely many potentials $\varphi_1, \cdots, \varphi_M$, with $\var_1(\varphi_k)<\infty$. Suppose that $\alpha=(\alpha_k)_{k=1}^M$ is such that for all $k \leq K \leq M$ we have $\alpha_k \in \R$ and for $K < k \leq M$, $\alpha_k= \infty$. Fix some $m \in \N$. Then for all sufficiently large $n \in \N$ we have,
\begin{eqnarray*}
\dim J_{\varphi}(\alpha) &\leq& C_{\sigma}^n(\chi, \psi) \sup \left\lbrace D_q(\mu)\right\rbrace,
\end{eqnarray*}
where the supremum is taken over all $\mu \in \E_{\sigma^q}^*(\Sigma)$ for some $q \in \N$ with $| \int A_q(\varphi_k) d\mu - \alpha_k |<  3\var_n(A_n(\varphi_k))$ for $k \leq K$ and $\int A_q(\varphi_k) d\mu>m$ for $K < k \leq M$.
\end{lemma}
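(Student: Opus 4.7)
The strategy is to apply Lemma \ref{approx lemma1} to the $n$-th iterate of the INC system, viewed as a new INC system in its own right. The iterated system has one-step shift $\hat\sigma := \sigma^n$, digit set $\D^n$, vertical digit set $\A^n$, and (for $n \geq N$) one-step contraction ratio $\hat\zeta := \zeta^n$. By the chain rule together with the fixed-point identity $\Pi(\sigma^l\omega) = S_{\omega_{l+1}}\circ\cdots\circ S_{\omega_n}(\Pi(\sigma^n\omega))$, the distortion potentials of this iterated system are exactly $\hat\chi = S_n(\chi)$ and $\hat\psi = S_n(\psi)$. Their first-level (iterated) variations are $\var_1(\hat\chi) = n\var_n(A_n(\chi))$ and $\var_1(\hat\psi) = n\var_n(A_n(\psi))$, so the hypothesis $\var_1(\hat\chi), \var_1(\hat\psi) < -\log\hat\zeta$ of Lemma \ref{approx lemma1} reduces to $\var_n(A_n(\chi)), \var_n(A_n(\psi)) < -\log\zeta$, which holds for all sufficiently large $n$ by the tempered distortion property.

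Next, I take $\hat\varphi_k := A_n(\varphi_k)$ as the potentials on the iterated system; these are designed so that $A^{\hat\sigma}_m(\hat\varphi_k) = A_{nm}(\varphi_k)$ for every $m$. Hence the original level set $E_\varphi(\alpha)$ is contained in the analogous iterated level set, whence $J_\varphi(\alpha)$ is contained in its iterated counterpart. The finiteness $\var_1(\hat\varphi_k) = \var_n(A_n(\varphi_k)) \leq \var_1(\varphi_k) < \infty$ is automatic. Substituting $\hat\zeta$, $\hat\chi$, $\hat\psi$ into the definition of the constant from Lemma \ref{approx lemma1} shows $C_{\hat\sigma}(\hat\chi,\hat\psi) = C^n_\sigma(\chi,\psi)$, since the factors of $n$ in numerator and denominator cancel. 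Applying Lemma \ref{approx lemma1} to the iterated data therefore gives, for all sufficiently large $n$,
\[
\dim J_\varphi(\alpha) \leq C^n_\sigma(\chi,\psi)\, \sup \hat D_q(\mu),
\]
where the supremum ranges over $\mu \in \E^*_{\hat\sigma^q}(\Sigma)$ obeying $\bigl|\int A^{\hat\sigma}_q(\hat\varphi_k)\,d\mu - \alpha_k\bigr| < 3\var_n(A_n(\varphi_k))$ for $k \leq K$, and $\int A^{\hat\sigma}_q(\hat\varphi_k)\,d\mu > m$ for $K < k \leq M$.

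It remains to translate the iterated quantities back to the original system. Setting $q' := nq$, we have $\hat\sigma^q = \sigma^{q'}$, so $\mu \in \E^*_{\sigma^{q'}}(\Sigma)$. The identities $S^{\hat\sigma}_q(\hat\chi) = S_{q'}(\chi)$ (and the corresponding one for $\psi$), together with the direct computation $\int A^{\hat\sigma}_q(\hat\varphi_k)\,d\mu = \int A_{q'}(\varphi_k)\,d\mu$ and the entropy equality $h_\mu(\hat\sigma^q|\pi^{-1}\mathscr{A}) = h_\mu(\sigma^{q'}|\pi^{-1}\mathscr{A})$, yield $\hat D_q(\mu) = D_{q'}(\mu)$ and match the Birkhoff constraints with those appearing in the statement of the lemma. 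The supremum on the right-hand side is thus taken over a subfamily of the one appearing in the stated bound, completing the proof. The main obstacle lies in this bookkeeping: verifying the identity $\hat\chi = S_n(\chi)$ exactly (not merely up to tempered distortion) via the fixed-point relation for $\Pi$, and checking that the constant produced is precisely $C^n_\sigma(\chi,\psi)$ rather than some looser quantity arising from the factors of $n$ in the iterated distortion estimates.
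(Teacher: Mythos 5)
Your proposal is correct and follows essentially the same route as the paper: the paper's proof likewise observes that the iterated system $(S_\eta)_{\eta\in\D^n}$ is again an INC system, applies Lemma \ref{approx lemma1} with $\sigma^n$, $A_n(\varphi_k)$, $S_n(\chi)$, $S_n(\psi)$, $\zeta^n$, $\var_n$ in place of $\sigma$, $\varphi_k$, $\chi$, $\psi$, $\zeta$, $\var_1$, checks $C_{\sigma^n}(S_n(\chi),S_n(\psi))=C^n_{\sigma}(\chi,\psi)$, and uses the inclusion $J_{\varphi}(\alpha)\subseteq J^{\sigma^n}_{A_n(\varphi)}(\alpha)$ before relabelling $nq$ as a particular choice of $q$. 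Your bookkeeping (the exact identity $\hat\chi=S_n(\chi)$ via the fixed-point relation for $\Pi$, and the translation $\hat D_q(\mu)=D_{nq}(\mu)$) is sound and matches the paper's argument.
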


\begin{proof} First note that by the Uniform Contraction Condition, together with the Tempered Distortion Condition applied to $\chi$, $\psi$ and $\varphi_1, \cdots, \varphi_K$ we may choose $N \in \N$ so that for all $n \geq N$ we have
\begin{enumerate}
\vspace{2mm}
\item[(i)] $\max\left\lbrace \sup_{x \in I}|f'_{\omega}(x)|,\sup_{x \in I}|g'_{\mathbf{i}}(x)| \right\rbrace \leq \zeta^n$,
\vspace{4mm}
\item[(ii)] $ \max \left\lbrace \var_n(A_n(\chi)), \var_n(A_n(\psi)) \right\rbrace < -\log \zeta$,
\vspace{4mm}
\item[(iii)] $ \max \left\lbrace \var_n(A_n(\varphi_k)):k \in \{1,\cdots, K\} \right\rbrace < \infty.$
\vspace{2mm}
\end{enumerate}
For each $n\geq N$ we construct an associated iterated function system in the following way. Given $\xi=\xi_1 \cdots \xi_n \in \D^n$ we let 
\begin{equation*}
S_{\xi}:= S_{\xi_1} \circ \cdots \circ S_{\xi_n}.
\end{equation*}
It follows from the fact that $(S_{ij})_{(i,j) \in \D}$ is an INC-system that $(S_{\eta})_{\eta \in \D^n}$ is also an INC- system. Moreover, it follows from conditions (i), (ii) and (iii) above that the potentials $A_n(\varphi_1), \cdots A_n(\varphi_K)$ on $(\D^n)^{\N}=\Sigma$, together with the INC-system $(S_{\eta})_{\eta \in \D^n}$ satisfy the conditions of Lemma \ref{approx lemma1} with $\sigma^n$ in place of $\sigma$, $A_n(\varphi_k)$ in place of $\varphi_k$,  $S_n(\chi)$ in place of $\chi$, $S_n(\psi)$ in place of $\psi$, $\zeta^n$ in place of $\zeta$ and $\var_n$ in place of $\var_1$. We let,
\begin{eqnarray*}
E^{\sigma^n}_{A_n(\varphi)}(\alpha)&:=& \left\lbrace \omega \in \Sigma: \lim_{l \rightarrow \infty}A_{ln}(\varphi_k)(\omega)=\alpha_k \text{ for all } k\leq K  \right\rbrace\\
J^{\sigma^n}_{A_n(\varphi)}(\alpha)&:=&\Pi\left(E_{A_n(\varphi)}(\alpha)\right).
\end{eqnarray*}
Note also that,
 \begin{eqnarray*} C_{\sigma^n}(S_n(\chi), S_n(\psi)):&=& \max \left\lbrace \frac{-\log \zeta^n}{-\log \zeta^n -\var_n(S_n(\chi))}, \frac{-\log \zeta^n}{-\log \zeta^n -\var_n(S_n(\psi))}\right\rbrace\\
&=& \max \left\lbrace \frac{-\log \zeta}{-\log \zeta -\var_n(A_n(\chi))}, \frac{-\log \zeta}{-\log \zeta -\var_n(A_n(\psi))}\right\rbrace\\
&=& C^{n}_{\sigma}(\chi, \psi).
\end{eqnarray*} 
Thus, by Lemma \ref{approx lemma1} we have, $\dim J^{\sigma^n}_{A_n(\varphi)}(\alpha)$
\begin{eqnarray*} 
 \leq C_{\sigma^n}(S_n(\chi), S_n(\psi)) & \sup & \bigg\{D_{nq}(\mu): q \in \N, \mu \in \E_{\sigma^{nq}}^*(\Sigma),  \big| \int A_{nq}(\varphi_k) d\mu - \alpha_k \big|< 3\var_n(A_n(\varphi_k)),\\ && \text{ for }k \leq K
\text{ and } \int A_{nq}(\varphi_k) d\mu>m   \text{ for } K < k \leq M\bigg\}\\
\leq C_{\sigma}^n(\chi,\psi) & \sup & \bigg\{D_{q}(\mu): q \in \N, \mu \in \E_{\sigma^{q}}^*(\Sigma),  \big| \int A_{q}(\varphi_k) d\mu - \alpha_k \big|< 3\var_n(A_n(\varphi_k)),\\ && \text{ for }k \leq K
\text{ and } \int A_{nq}(\varphi_k) d\mu>m   \text{ for } K < k \leq M\bigg\}.
\end{eqnarray*}
Moreover, given $\omega \in E_{\varphi}(\alpha)$ we have, $\lim_{l \rightarrow \infty}A_l(\varphi_k)(\omega)=\alpha_k$ and hence $\lim_{l \rightarrow \infty}A_{ln}(\varphi_k)(\omega)=\alpha_k$. Thus, $J_{\varphi}(\alpha) \subseteq J^{\sigma^n}_{A_n(\varphi)}(\alpha)$. Hence, 
\begin{eqnarray*} 
\dim J_{\varphi}(\alpha) \leq C_{\sigma}^n(\chi,\psi) & \sup & \bigg\{D_{q}(\mu): q \in \N, \mu \in \E_{\sigma^{q}}^*(\Sigma),  \big| \int A_{q}(\varphi_k) d\mu - \alpha_k \big|< 3\var_n(A_n(\varphi_k)), \\&& \text{ for }k \leq K
\text{ and } \int A_{nq}(\varphi_k) d\mu>m   \text{ for } K < k \leq M\bigg\}.\end{eqnarray*}
\end{proof}

We now require a lemma relating $\sigma^q$-invariant measures to $\sigma$-invariant measures.
\begin{lemma}\label{averagemeasure}
Take $\nu \in \M^*_{\sigma^q}(\Sigma)$ and let $\mu=A_q(\nu)$. Then, 
\begin{enumerate}
\vspace{2mm}
\item[(i)] $\mu\in\M^*_{\sigma}(\Sigma)$,
\vspace{4mm}
\item[(ii)] If $\nu\in\E^*_{\sigma^q}(\Sigma)$ then $\mu\in\E^*_{\sigma}(\Sigma)$,
\vspace{4mm}
\item[(iii)] $h_{\mu}(\sigma)=q^{-1} h_{\nu}(\sigma^k)$,
\vspace{4mm}
\item[(iv)] $h_{\mu\circ \pi^{-1}}(\sigma_v)=q^{-1}h_{\nu\circ \pi^{-1}}(\sigma_v^q)$,
\vspace{4mm}
\item[(v)] $h_{\mu}(\sigma|\pi_v^{-1}\mathscr{A}_v)=q^{-1}h_{\nu}(\sigma^q|\pi^{-1}\mathscr{A}).$
\vspace{2mm}
\end{enumerate}
Moreover, given any $\theta \in C(\Sigma)$, $\theta^v \in C(\Sigma_v)$ we have,
\begin{enumerate}
\vspace{2mm}
\item[(vi)] $\int \theta d \mu = \int A_q(\theta) d \nu$,
\vspace{4mm}
\item[(vii)] $\int \theta^v d \mu \circ \pi^{-1} = \int A_q(\theta^v) d \nu\circ \pi^{-1}$.
\end{enumerate}

\end{lemma}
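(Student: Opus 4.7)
The plan is to set $\mu := A_q(\nu) = \frac{1}{q}\sum_{l=0}^{q-1} \nu \circ \sigma^{-l}$ and then verify each item in the order listed. Claims (i), (vi) and (vii) are essentially bookkeeping; (ii) is a short invariance argument; (iii) and (iv) reduce to affinity of entropy together with the fact that $\sigma^l$ preserves $\sigma^q$-entropy; and (v) follows either by replaying that argument at the conditional level, or by combining (iii) and (iv) with the Rokhlin--Abramov decomposition $h_\mu(\sigma) = h_{\mu \circ \pi^{-1}}(\sigma_v) + h_\mu(\sigma|\pi^{-1}\mathscr{A})$.

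First I would dispatch (i): the identity $\mu \circ \sigma^{-1} = \frac{1}{q}\sum_{l=1}^{q}\nu \circ \sigma^{-l}$ together with $\nu \circ \sigma^{-q} = \nu$ gives $\sigma$-invariance of $\mu$, and if $K$ is a compact set carrying $\nu$ then each $\nu \circ \sigma^{-l}$ is carried by the compact set $\sigma^l(K)$, so $\mu$ is carried by $\bigcup_{l=0}^{q-1}\sigma^l(K)$. Properties (vi) and (vii) are immediate from the change of variable $\int \theta \, d(\nu\circ\sigma^{-l}) = \int \theta \circ \sigma^l \, d\nu$ and the definition of $A_q(\theta)$. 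For (ii), suppose $\sigma^{-1}A = A$ and $\mu(A) > 0$: since $A$ is then invariant under every $\sigma^{-l}$ one has $\mu(A) = \nu(A) > 0$, and since $\sigma^{-q}A = A$, ergodicity of $\nu$ under $\sigma^q$ forces $\nu(A) = 1$, hence $\mu(A) = 1$.

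For the entropy identities I plan to use affinity of entropy. Each pushforward $(\sigma^l)_*\nu = \nu \circ \sigma^{-l}$ is $\sigma^q$-invariant because $\sigma^l$ and $\sigma^q$ commute. Since $\nu = (\sigma^{q-l})_*(\sigma^l)_*\nu$, the factor-map bound $h_{S_*\lambda}(T) \le h_\lambda(T)$ applied in both directions yields $h_{(\sigma^l)_*\nu}(\sigma^q) = h_\nu(\sigma^q)$. Affinity of entropy for $\sigma^q$-invariant measures then gives $h_\mu(\sigma^q) = \frac{1}{q}\sum_l h_{(\sigma^l)_*\nu}(\sigma^q) = h_\nu(\sigma^q)$, and combining with $h_\mu(\sigma^q) = q\, h_\mu(\sigma)$ proves (iii). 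Because $\pi$ intertwines the two shifts one has $\mu \circ \pi^{-1} = A_q(\nu \circ \pi^{-1})$, so the identical argument on the vertical side produces (iv). For (v) I would replay the argument on the conditional level, observing that $\pi^{-1}\mathscr{A}$ is $\sigma$-invariant, hence also $\sigma^q$-invariant and preserved by each $\sigma^l$, so affinity and pushforward-invariance pass to the conditional entropy $h_{\bullet}(\sigma^q|\pi^{-1}\mathscr{A})$; then $h_\mu(\sigma^q|\pi^{-1}\mathscr{A}) = q\, h_\mu(\sigma|\pi^{-1}\mathscr{A})$ finishes the argument. Alternatively one may simply combine (iii), (iv) and the Rokhlin--Abramov formula applied at $T=\sigma$ and $T=\sigma^q$.

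The only step that is not purely formal is the conditional identity (v): one must check that affinity of entropy and factor-map invariance genuinely survive upon conditioning on $\pi^{-1}\mathscr{A}$. This is standard, but is the one point where a careful setup is required; everything else is a direct calculation or the classical averaging argument.
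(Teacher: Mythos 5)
Your proposal is correct and follows essentially the same route as the paper: the paper simply cites \cite[Lemma 2]{non-uniformly hyperbolic} for the facts you prove directly by the averaging/affinity argument (parts (i)--(iii), (vi)), handles (iv) and (vii) via the same intertwining relation $\pi\circ\sigma=\sigma_v\circ\pi$, and obtains (v) exactly by your fallback route, namely combining (iii) and (iv) with the Abramov--Rokhlin formula. The only cosmetic remark is that the conditional-affinity argument you sketch first for (v) is unnecessary, since the Abramov--Rokhlin decomposition (with the vertical entropy finite because $\A$ is finite) already does the job.
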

\begin{proof}
Parts (i), (ii), (iii) and (vi) follow from \cite[Lemma 2]{non-uniformly hyperbolic}. It is clear that $\mu$ is compactly supported. Since $\pi \circ \sigma = \sigma_v \circ \pi$ we have $A_k(\nu\circ \pi^{-1})=A_k(\nu)\circ \pi^{-1}$ and hence (iv) and (vii) also follow from \cite[Lemma 2]{non-uniformly hyperbolic}. Part (v) follows from parts (iii) and (iv) combined with the Abramov Rokhlin formula \cite{Abramov Rokhlin}.
\end{proof}
The following proposition completes the proof of the upper bound.
\begin{prop} Suppose we have countably many potentials $(\varphi_k)_{k \in \N}$. Then, for all $\alpha=(\alpha_k)_{k \in \N}\in \left(\R\cup \{\infty\}\right) ^{\N}$ we have, 
\[\dim J_{\varphi}(\alpha) \leq \lim_{m \rightarrow \infty}\sup\left\lbrace D(\mu):  \mu \in \E_{\sigma}^*(\Sigma), \int \varphi_k d\mu \in B_{m}(\alpha_k)   \text{ for } k \leq m\right\rbrace.\]
\end{prop}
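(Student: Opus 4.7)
The idea is to truncate the countable family to its first $m$ potentials, apply Lemma~\ref{ Iterated INC lemma} to the truncation with carefully chosen parameters, convert the resulting $\sigma^q$-invariant ergodic measures into $\sigma$-invariant ergodic measures via Lemma~\ref{averagemeasure}, and then let $m\to\infty$.

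Fix $m\in\N$. After permuting coordinates, assume $\alpha_k\in\R$ for $k\le K(m)$ and $\alpha_k=\infty$ for $K(m)<k\le m$. From the definition of $E_\varphi(\alpha)$ there is the trivial inclusion
\[ J_\varphi(\alpha)\subseteq J_{(\varphi_k)_{k\le m}}((\alpha_k)_{k\le m}), \]
since convergence of all countably many Birkhoff averages is stronger than convergence of the first $m$. The tempered distortion property applied to $\chi$, $\psi$ and to the finitely many potentials $\varphi_1,\dots,\varphi_{K(m)}$ gives $\var_n(A_n(\chi)),\ \var_n(A_n(\psi)), \var_n(A_n(\varphi_k))\to 0$ as $n\to\infty$, and in particular $C_\sigma^n(\chi,\psi)\to 1$. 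Choose $n=n(m)$ large enough that $C_\sigma^{n(m)}(\chi,\psi)\le 1+1/m$ and $3\var_{n(m)}(A_{n(m)}(\varphi_k))<1/m$ for every $k\le K(m)$.

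Applying Lemma~\ref{ Iterated INC lemma} to the truncated family $(\varphi_1,\ldots,\varphi_m)$, with auxiliary parameter equal to $m$, then gives
\[ \dim J_\varphi(\alpha) \le \bigl(1+\tfrac{1}{m}\bigr)\sup D_q(\nu), \]
where the supremum is taken over $q\in\N$ and $\nu\in\E_{\sigma^q}^*(\Sigma)$ satisfying $\bigl|\int A_q(\varphi_k)\,d\nu-\alpha_k\bigr|<1/m$ for $k\le K(m)$ and $\int A_q(\varphi_k)\,d\nu>m$ for $K(m)<k\le m$. For any such $\nu$ set $\mu:=A_q(\nu)$; Lemma~\ref{averagemeasure} yields $\mu\in\E_\sigma^*(\Sigma)$, and parts (iv)--(vii) combine to give $D(\mu)=D_q(\nu)$ and $\int\varphi_k\,d\mu=\int A_q(\varphi_k)\,d\nu$. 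The constraints translate exactly into $\int\varphi_k\,d\mu\in B_m(\alpha_k)$ for every $k\le m$, so
\[ \dim J_\varphi(\alpha) \le \bigl(1+\tfrac{1}{m}\bigr)\sup\bigl\{D(\mu):\mu\in\E_\sigma^*(\Sigma),\,\textstyle\int\varphi_k\,d\mu\in B_m(\alpha_k)\text{ for all }k\le m\bigr\}. \]
Letting $m\to\infty$ finishes the proof.

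The only real obstacle is bookkeeping: one must align two distinct uses of $m$ (the auxiliary parameter inside Lemma~\ref{ Iterated INC lemma}, which controls the $+\infty$-directed constraints, versus the neighbourhood index in $B_m$), and check that a single choice $n=n(m)$ simultaneously forces $C_\sigma^{n(m)}(\chi,\psi)-1$ and each of the finitely many $\var_{n(m)}(A_{n(m)}(\varphi_k))$ below the required thresholds. Both are available because TDP, together with the fact that only finitely many potentials are active at each truncation level, yields uniform convergence to zero of all the relevant distortion quantities.
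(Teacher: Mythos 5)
Your proposal is correct and follows essentially the same route as the paper: truncate to the first $m$ potentials and reorder so the finite values come first, apply Lemma~\ref{ Iterated INC lemma} with $n$ large enough that the tempered distortion property makes $C_\sigma^n(\chi,\psi)$ close to $1$ and $3\var_n(A_n(\varphi_k))<1/m$, convert the resulting $\sigma^q$-ergodic measures into $\sigma$-ergodic ones via Lemma~\ref{averagemeasure}, identify the constraints with $B_m(\alpha_k)$, and let $m\to\infty$. The only cosmetic difference is that you carry the factor $(1+1/m)$ into the final limit rather than sending $n\to\infty$ at fixed $m$ to remove it first, which is harmless since the suprema are bounded.
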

\begin{proof}
It suffices to show that for each $m \in \N$ we have,
\begin{eqnarray*}\label{suffices}
\dim J_{\varphi}(\alpha) \leq \sup\left\lbrace D(\mu):  \mu \in \E_{\sigma}^*(\Sigma), \int \varphi_k d\mu \in B_{m}(\alpha_k)   \text{ for } k \leq m\right\rbrace.
\end{eqnarray*}
Fix $m \in \N$. Without loss of generality we may assume that there are only $m$ potentials $\varphi_1, \cdots, \varphi_m$. If not, we consider the set,
\begin{eqnarray}
E^m_{\varphi}(\alpha)&:=& \left\lbrace \omega \in \Sigma: \lim_{n \rightarrow \infty} A_n(\varphi_k)= \alpha_k \text{ for } k \leq m \right\rbrace,\\
J^m_{\varphi}(\alpha)&:=& \Pi \left(E^m_{\varphi}(\alpha) \right), 
\end{eqnarray}
and note that $E_{\varphi}(\alpha) \subseteq E^m_{\varphi}(\alpha)$ and hence $\dim J_{\varphi}(\alpha) \leq \dim J^m_{\varphi}(\alpha)$.
Finally we may reorder our potentials so that there is some $K\leq m$ such that 
for all $k \leq K \leq M$ we have $\alpha_k \in \R$ and for $K < k \leq M$, $\alpha_k= \infty$. Now we are in precisely the position of Lemma \ref{ Iterated INC lemma}, so
\begin{eqnarray*} 
\dim J_{\varphi}(\alpha) \leq C_{\sigma}^n(\chi,\psi) & \sup & \bigg\{D_{q}(\mu): q \in \N, \mu \in \M_{\sigma^{q}}^*(\Sigma),  \big| \int A_{q}(\varphi_k) d\mu - \alpha_k \big|< 3\var_n(A_n(\varphi_k)), \\&& \text{ for }k \leq K
\text{ and } \int A_{nq}(\varphi_k) d\mu>m   \text{ for } K < k \leq M\bigg\}.\end{eqnarray*}
Since $\lim_{n \rightarrow \infty}\var_n( A_n(\chi))=\lim_{n \rightarrow \infty} \var_n(A_n(\psi))=\lim_{n \rightarrow \infty} \var_n(A_n(\varphi_k))=0$ for all $k \leq m$, and hence $\lim_{n \rightarrow \infty}C_{\sigma}^n(\chi, \psi)= 1$, we have,
\begin{eqnarray*} 
\dim J_{\varphi}(\alpha) \leq &\sup& \bigg\{D_{q}(\mu): q \in \N, \mu \in \E_{\sigma^{q}}^*(\Sigma),  \big| \int A_{q}(\varphi_k) d\mu - \alpha_k \big|< \frac{1}{m}, \\&& \text{ for }k \leq K
\text{ and } \int A_{q}(\varphi_k) d\mu>m   \text{ for } K < k \leq M\bigg\}.\end{eqnarray*}
Recall that for $\gamma \in \R\cup \{+\infty\}$ and $l \in \N$ we let
\begin{eqnarray}
B_l(\gamma):= \begin{cases} \left\lbrace x: |x - \gamma|< \frac{1}{l} \right\rbrace \text{ if } \gamma \in \R\\ (l , +\infty)\text{ if } \gamma =\infty.\end{cases}
\end{eqnarray}
So we may rewrite the above inequality as 
\begin{eqnarray*}\label{suffices}
\dim J_{\varphi}(\alpha) \leq \sup\left\lbrace D_q(\mu): q \in \N,  \mu \in \E_{\sigma^q}^*(\Sigma), \int A_q(\varphi_k) d\mu \in B_{m}(\alpha_k)   \text{ for } k \leq m\right\rbrace.
\end{eqnarray*}
Finally, by Lemma \ref{averagemeasure}, given $\nu \in \E_{\sigma^q}^*(\Sigma)$ we may choose $\mu \in \E_{\sigma}^*(\Sigma)$ with $D(\mu) = D_q(\nu)$ and $\int \varphi_k d\mu = \int A_q(\varphi_k) d\mu$. Thus,
\begin{eqnarray*}\label{suffices}
\dim J_{\varphi}(\alpha) \leq \sup\left\lbrace D(\mu):  \mu \in \E_{\sigma}^*(\Sigma), \int \varphi_k d\mu \in B_{m}(\alpha_k)   \text{ for } k \leq m\right\rbrace.
\end{eqnarray*}
This completes the proof of the upper bound.
\end{proof}

\section{Preliminary lemmas for the lower bound}\label{Preliminaries for lower bound}

\subsection{Dimension Lemmas}
In this section we shall relate the symbolic local dimension of a measure to the local dimension of its projection. This will enable us to apply the following standard lemma.

\begin{lemma}\label{dim lem} Let $\nu$ be a finite Borel measure on some metric space $X$. Suppose we have $J\subseteq X$ with $\nu(J)>0$ such that for all $x \in J$
 \[\liminf_{r \rightarrow 0}\frac{\log \nu( B(x,r))}{\log r} \geq d.\]Then $\dim J \geq d$.
\end{lemma}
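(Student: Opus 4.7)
The plan is to use the standard mass distribution principle, adapted to handle the fact that the liminf bound at each point only kicks in at some (point-dependent) scale.

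First, I fix an arbitrary $s<d$ and translate the hypothesis into a more uniform statement by a countable exhaustion. The liminf condition at $x$ gives some $r_0(x)>0$ such that $\nu(B(x,r))\leq r^s$ whenever $r<r_0(x)$. Setting $J_k:=\{x\in J:\nu(B(x,r))\leq r^s\text{ for all }r<1/k\}$, we have $J=\bigcup_{k\in\N}J_k$, so $\nu(J)>0$ forces $\nu(J_{k_0})>0$ for some $k_0$.

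Next, I apply the mass distribution principle to the set $J_{k_0}$ at the scale $1/k_0$. Given any countable cover $\{U_i\}$ of $J_{k_0}$ with $\mathrm{diam}(U_i)<1/k_0$, I discard those $U_i$ disjoint from $J_{k_0}$ and, for the remaining ones, pick $x_i\in U_i\cap J_{k_0}$. Since $U_i\subseteq B(x_i,\mathrm{diam}(U_i))$ and $\mathrm{diam}(U_i)<1/k_0$, the defining property of $J_{k_0}$ gives
\[
\nu(U_i\cap J_{k_0})\leq \nu(B(x_i,\mathrm{diam}(U_i)))\leq \mathrm{diam}(U_i)^s.
\]
Summing and using countable subadditivity yields $\sum_i\mathrm{diam}(U_i)^s\geq \nu(J_{k_0})>0$. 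Taking the infimum over all such covers shows $\mathcal{H}^s_{1/k_0}(J_{k_0})\geq \nu(J_{k_0})$, and letting the scale shrink gives $\mathcal{H}^s(J)\geq \mathcal{H}^s(J_{k_0})\geq \nu(J_{k_0})>0$. Hence $\dim J\geq s$, and letting $s\nearrow d$ completes the proof.

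There is no real obstacle here: this is a textbook mass distribution argument, and the only mildly delicate point is the countable decomposition needed because the scale $r_0(x)$ depends on $x$. One could alternatively cite a standard reference (e.g.\ Falconer's \emph{Fractal Geometry}, Proposition 2.2 combined with a pointwise-to-uniform argument), but the short direct proof sketched above fits naturally into the paper.
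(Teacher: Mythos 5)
Your proof is correct: the paper itself offers no argument but simply cites Falconer, \emph{Techniques in Fractal Geometry}, Proposition 2.2, and your countable-exhaustion-plus-mass-distribution argument is exactly the standard proof of that cited result. The only cosmetic point is the inclusion $U_i\subseteq B(x_i,\diam(U_i))$, which with open balls should be handled by using $\nu(U_i)\leq\nu(B(x_i,r))\leq r^s$ for every $r$ with $\diam(U_i)<r<1/k_0$ and letting $r\downarrow\diam(U_i)$; this does not affect the conclusion.
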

\begin{proof}
See \cite{Falconer Techniques} Proposition 2.2.
\end{proof}

Given subsets $A, B \subseteq \R$ by $A \leq B$ we mean $x\leq y$ for all $x \in A$ and $y \in B$.
We shall say that the digit set $\D$ is wide if there exists $(i_1,j_1),(i_2,j_2)\in \D$ with $i_1=i_2$ and $j_1\neq j_2$. It follows that there exists an $i'\in \A$ together with pairs $ j^1_-, j^2_-,j^1_0,j^2_0, j^1_+, j^2_+ \in \N$ so that
\begin{eqnarray}\label{enclosed interval}
f_{i'j^1_-}\circ f_{i'j^2_-}([0,1])\leq f_{i'j^1_0}\circ f_{i'j^2_0}([0,1]) \leq f_{i'j^1_+}\circ f_{i' j^2_+}([0,1]).
\end{eqnarray}

Now define,
\begin{equation}
W_n(\omega):= \min\left\lbrace l>n+1: \omega_{l}=(i',j^1_0), \omega_{l+1}=(i',j^2_0) \right\rbrace-n,
\end{equation}
and
\begin{equation}
R_n(\omega):= \max\left\lbrace -\log \inf_{x \in [0,1]}|f_{\omega_{\nu}}'(x)|: \nu \leq n+W_n(\omega)+1 \right\rbrace.
\end{equation}

\begin{lemma} \label{SymbDimPDimH} Let $\mu$ be a finite Borel measure on $\Sigma$ supported on $\pi^{-1}(\{\tau\})$ for some $\tau \in \Sigma_v$. Let $\nu:= \mu \circ \Pi^{-1}$ the corresponding projection on $\Lambda$. Suppose $\D$ is wide. Then for all $x=\Pi(\omega) \in \Lambda$ with $\pi(\omega)=\tau$ and $\lim_{n\rightarrow \infty} R_n(\omega)n^{-1}=\lim_{n \rightarrow \infty}R_n(\omega) W_n(\omega)n^{-1}=0$, 
\vspace{2mm} 
\begin{equation*}
\liminf_{r \rightarrow 0}\frac{\log \nu( B(x,r))}{\log r}\geq \liminf_{n \rightarrow \infty}\frac{-\log \mu([\omega|n])}{S_n(\chi)(\omega)}.
\end{equation*}
\end{lemma}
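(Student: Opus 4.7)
The plan is to exploit the fact that $\mu$ lives in a single horizontal fibre $\pi^{-1}(\{\tau\})$ so that the ball $B(x,r)$ reduces to a horizontal question, then invoke wideness, the OIC, and the TDP to enclose $\Pi(\omega)_1$ inside a buffer of horizontal size comparable to $e^{-S_{n+W_n-1}(\chi)(\omega)}$.

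First, since $S_{ij}(x,y)=(f_{ij}(x),g_i(y))$, every $\omega' \in \pi^{-1}(\{\tau\})$ satisfies $\Pi(\omega')_2 = \Pi_v(\tau) = x_2$, so the problem reduces to showing that for an appropriately chosen $n$, every $\omega' \in \pi^{-1}(\{\tau\})$ with $\omega'|n \neq \omega|n$ has $|\Pi(\omega')_1 - \Pi(\omega)_1| > r$. By wideness and the definition of $W_n(\omega)$, positions $n+W_n$ and $n+W_n+1$ of $\omega$ carry the pattern $(i',j^1_0),(i',j^2_0)$, so $\Pi(\omega)_1 \in f_{\omega|n+W_n-1}(M)$, where $L < M < R$ are the three disjoint subintervals of $(0,1)$ from (\ref{enclosed interval}). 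Monotonicity of each $f_{ij}$ and the choice $d_0 := \min(\min M, 1 - \max M) > 0$ give
\[
\mathrm{dist}\bigl(\Pi(\omega)_1, \partial f_{\omega|n+W_n-1}(I)\bigr) \geq d_0 \inf_I |f'_{\omega|n+W_n-1}|,
\]
and the TDP together with the chain-rule identity $|f'_{\omega|n+W_n-1}(\Pi(\sigma^{n+W_n-1}\omega))| = \exp(-S_{n+W_n-1}(\chi)(\omega))$ bounds the right-hand side below by $d_0\exp\bigl(-S_{n+W_n-1}(\chi)(\omega) - (n+W_n-1)\rho_{n+W_n-1}\bigr)$. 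By the OIC the competing cylinder images $f_{\omega'|n}(I)$ with $\omega'|n \neq \omega|n$ and $\pi(\omega'|n)=\pi(\omega|n)$ are disjoint from $f_{\omega|n}(I) \supseteq f_{\omega|n+W_n-1}(I)$, so the same bound controls the horizontal separation between $\Pi(\omega)_1$ and every such $\Pi(\omega')_1$.

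Let $n(r)$ be the largest $n$ for which this lower bound still exceeds $r$; then $\nu(B(x,r)) \leq \mu([\omega|n(r)])$ and $n(r) \to \infty$ as $r \to 0$. The next step is to show that the two hypotheses give $-\log r = S_{n(r)}(\chi)(\omega)(1+o(1))$. UCC together with the chain-rule bound $\prod_{\nu=1}^N \inf_I |f'_{\omega_\nu}| \leq \inf_I|f'_{\omega|N}| \leq \xi^N$ forces $R_n(\omega) \geq \log(1/\xi)$ for all sufficiently large $n$, and this combined with $R_n(\omega)W_n(\omega)/n \to 0$ yields $W_n(\omega)/n \to 0$, hence $(n+W_n)\rho_{n+W_n} = o(n)$ via $\rho_n \to 0$; meanwhile $S_{n+W_n-1}(\chi)(\omega) - S_n(\chi)(\omega) \leq (W_n-1)R_n(\omega) = o(n)$, which is $o(S_n(\chi)(\omega))$ because UCC also supplies $S_n(\chi)(\omega) \geq n\log(1/\xi)$. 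Applying these estimates with $n(r)+1$ in place of $n(r)$ sandwiches $-\log r$ and yields the claimed asymptotic. Combining,
\[
\frac{\log \nu(B(x,r))}{\log r} = \frac{-\log \nu(B(x,r))}{-\log r} \geq \frac{-\log \mu([\omega|n(r)])}{S_{n(r)}(\chi)(\omega)(1+o(1))},
\]
and passing to $\liminf_{r \to 0}$ yields the stated inequality. The main obstacle is the buffer construction in the first paragraph: one must coordinate wideness, OIC, monotonicity, and the TDP carefully enough that the derivative-ratio error $(n+W_n-1)\rho_{n+W_n-1}$ ends up negligible against $S_n(\chi)(\omega)$, which is precisely what the two growth hypotheses on $R_n(\omega)$ and $R_n(\omega)W_n(\omega)$ are there to arrange.
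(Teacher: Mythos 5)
Your proposal is correct and follows essentially the same route as the paper's proof: restrict to the horizontal fibre, use wideness of $\D$ together with the OIC to build a buffer of width comparable to $\inf_I|f'_{\omega|n+W_n-1}|$ around $\Pi(\omega)_1$ inside the $n$-cylinder image so that $\nu(B(x,r))\leq\mu([\omega|n])$ at that scale, and then use $R_n(\omega)/n\to 0$, $R_n(\omega)W_n(\omega)/n\to 0$, the TDP and the UCC to identify $-\log r$ with $S_n(\chi)(\omega)(1+o(1))$. Your minor variations --- applying the TDP to the whole word $\omega|n+W_n-1$ in place of the paper's equicontinuity argument for $\{\log|f'_{ij}|\}$, and interpolating between scales via the choice of $n(r)$ rather than via the paper's limit $\log r_{n+1}/\log r_n\to 1$ --- are sound and do not change the substance of the argument.
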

\begin{proof} 
Suppose that $\D$ is wide and fix  $x=(x_1,x_2)=\Pi(\omega) \in \Lambda$ for some $\omega=((i_{\nu},j_{\nu}))_{\nu \in \N}\in  \pi^{-1}(\{\tau\})$ with
$\lim_{n\rightarrow \infty} R_n(\omega)n^{-1}=\lim_{n \rightarrow \infty}R_n(\omega) W_n(\omega)n^{-1}=0$. First note that since $\mu$ is supported on $\pi^{-1}(\{\tau\})$, $\nu$ is supported on $\R \times \{x_2\}$.
Define,
\begin{equation*}
 a_0:= \inf \left\lbrace  |f'_{d}(z)|:z \in [0,1], d \in \{(i',j^1_+),(i',j^2_+),(i',j^1_-),(i',j^2_-)\}\right\rbrace.
 \end{equation*}
Take $n \in \N$. By the definition of $W_n(\omega)$ the finite string $\omega_{n+W_n(\omega)}=(i',j^1_0)$ and $\omega_{n+W_n(\omega)+1}=(i',j^2_0)$. Now let,
\begin{eqnarray*}
\eta_0&:=& (\omega_{n+1}, \cdots ,\omega_{n+W_n(\omega)-1},(i',j^1_0),(i',j^2_0))\\
\eta_+&:=& (\omega_{n+1}, \cdots ,\omega_{n+W_n(\omega)-1},(i',j^1_+),(i',j^2_+))\\ 
\eta_-&:=& (\omega_{n+1}, \cdots ,\omega_{n+W_n(\omega)-1},(i',j^1_-),(i',j^2_-)).
\end{eqnarray*}
It follows from (\ref{enclosed interval}) that one of the following holds;
\begin{eqnarray*} f_{\omega|n}\circ
f_{\eta_-}([0,1]) & \leq & f_{\omega|n}\circ f_{\eta_0}([0,1])\leq f_{\omega|n}\circ
f_{\eta_+}([0,1]),\\
f_{\omega|n}\circ f_{\eta_+}([0,1])&\leq & f_{\omega|n}\circ f_{\eta_0}([0,1])\leq f_{\omega|n}\circ f_{\eta_-}([0,1]).
\end{eqnarray*}
Clearly each interval is contained within the interval $f_{\omega|n}([0,1])$. Moreover, it follows from the definitions of $W_n(\omega)$ and $a_0$ that both $f_{\eta_+}([0,1])$ and $f_{\omega|n}\circ
f_{\eta_-}([0,1])$ are of diameter at least $\inf_{x \in [0,1]}|f_{\omega|n}'(x)| e^{-W_n(\omega)R_n(\omega)}a_0^2$. Thus, since $x_1 \in f_{\omega|n}\circ f_{\eta_0}([0,1])=f_{\omega|n+W_n(\omega)+1}([0,1])$ and $x_2 \in g_{\mathbf{i}|n}([0,1])$ we have,
\begin{equation}
B\left(x_1, \inf_{z \in [0,1]}|f_{\omega|n}'(z)| e^{-W_n(\omega)R_n(\omega)}a_0^3 \right) \subseteq f_{\omega|n}((0,1)),
\end{equation}
and hence,
\begin{equation}
\nu\left(B\left(x, \inf_{z \in [0,1]}|f_{\omega|n}'(z)| e^{W_n(\omega)R_n(\omega)}a_0^3 \right) \right) \leq \mu \left([\omega|n]\right),
\end{equation}
since $\nu$ is supported on $\R \times \{x_2\}$.
So let 
\begin{equation}
r_n:= \inf_{z \in [0,1]}|f_{\omega|n}'(z)| e^{W_n(\omega)R_n(\omega)}a_0^3 .
\end{equation}
Choose $(z_n)_{n \in \N}\subset [0,1]$ so that $|f_{\omega|n}'(z_n)|=\inf_{z \in [0,1]}|f_{\omega|n}'(z)|$.
Note that for all $(\tau_1, \cdots, \tau_n) \in \D^n$, $\diam(f_{\tau_1} \circ \cdots f_{\tau_n}([0,1])) \leq \zeta^n$, so since the family $\left\lbrace \log |f_{ij}'|:(i,j) \in \D\right\rbrace$ is uniformly equicontinuous we have, 
\begin{eqnarray*}
\frac{1}{n}\log \inf_{z \in [0,1]}|f_{\omega|n}'(z)|&=&\frac{1}{n}\log|f_{\omega|n}'(z_n)|,\\
&=& \frac{1}{n}\sum_{l=1}^{n} \log |f'_{\omega_l}(f_{\omega_{l+1}}\circ \cdots \circ f_{\omega_n}(z_n))|,\\
&=& \frac{1}{n}\sum_{l=1}^{n} \left(\log |f'_{\omega_l}(\Pi(\sigma^l\omega))|+o(n-l)\right),\\
&=& \frac{1}{n}\sum_{l=1}^{n} \log |f'_{\omega_l}(\Pi(\sigma^l\omega))|+o(n)\\
&=& -\frac{1}{n}S_n(\chi)(\omega)+o(n).
\end{eqnarray*}
It follows that,
\begin{eqnarray*}
\lim_{n\rightarrow \infty}\frac{\log r_n}{-S_n(\chi)(\omega)}&=& \lim_{n\rightarrow \infty}\frac{\log \inf_{z \in [0,1]}|f_{\omega|n}'(z)| e^{W_n(\omega)R_n(\omega)}a_0^3}{S_n(\chi)(\omega)}\\
&=& \lim_{n\rightarrow \infty}\frac{-S_n(\chi)(\omega)+o(n)+W_n(\omega)R_n(\omega)}{-S_n(\chi)(\omega)}=1.
\end{eqnarray*}
Therefore,
\begin{equation*}
\liminf_{n \rightarrow \infty}\frac{\log \nu( B(x,r_n))}{\log r_n} \geq \liminf_{n \rightarrow \infty}\frac{\log \mu([\omega|n])}{-S_n(\chi)(\omega)}.
\end{equation*}
To conclude the proof of the lemma we observe that 
\begin{eqnarray*}
\lim_{n \rightarrow \infty}\frac{\log r_{n+1}}{\log r_n}&=&\lim_{n \rightarrow \infty}\frac{S_{n+1}(\chi)(\omega)}{S_{n}(\chi)(\omega)},\\
&=&\lim_{n \rightarrow \infty}\frac{S_{n}(\chi)(\omega)+\log |f'_{\omega_{n+1}}(\Pi(\sigma^{n+1}\omega))|}{S_{n}(\chi)(\omega)},\\
&=&\lim_{n \rightarrow \infty}\frac{S_{n}(\chi)(\omega)+O(R_n(\omega))}{S_{n}(\chi)(\omega)}=1.
\end{eqnarray*}
\end{proof}

We say that the digit set $\D$ is tall if there exists $(i_3,j_3),(i_4,j_4)\in \D$ with $i_3\neq i_4$. It follows that there exists pairs $i^1_-,i^2_-,i^1_0,i^2_0,i^1_+,i^2_+ \in \A$ so that, for all $x, y, z \in [0,1]$ we have,
\begin{eqnarray}
g_{i^1_-}\circ g_{i^2_-}(x)\leq g_{i^1_0}\circ g_{i^2_0}(y) \leq g_{i^1_+}\circ g_{i^2_+}(z).
\end{eqnarray}

Define
\begin{equation}
T_n(\tau):= \min\left\lbrace l>n: i_{l-1}=i^1_0, i_{l}=i^2_0 \right\rbrace-n.
\end{equation}

\begin{lemma} \label{SymbDimPDimV} Let $\mu$ be a finite Borel measure on $\Sigma_v$ and let $\nu:= \mu \circ \Pi_v^{-1}$ denote the corresponding projection. Suppose $\D$ is tall. Then for all $y=\Pi_v(\tau) \in \Pi_v(\Sigma_v)$ with $\lim_{n\rightarrow \infty} T_n(\tau)n^{-1}=0$, 
\vspace{2mm} 
\begin{equation*}
\liminf_{r \rightarrow 0}\frac{\log \nu( B(y,r))}{\log r}\geq \liminf_{n \rightarrow \infty}\frac{-\log \mu([\tau|n])}{S_n(\psi)(\tau)}.
\end{equation*}
\end{lemma}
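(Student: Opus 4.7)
The plan is to mirror the proof of Lemma \ref{SymbDimPDimH}, replacing the horizontal ingredients $(\Pi, f, \chi, \omega, W_n)$ with their vertical analogues $(\Pi_v, g, \psi, \tau, T_n)$. The key simplification is that $\A$ is finite, so $c_0 := \min_{i \in \A}\inf_{x \in [0,1]} |g'_i(x)|$ is a positive uniform constant; consequently no analogue of the quantity $R_n(\omega)$ appearing in Lemma \ref{SymbDimPDimH} is needed, which is why the statement of Lemma \ref{SymbDimPDimV} requires only $T_n(\tau)/n \to 0$.

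Fix $y = \Pi_v(\tau)$ with $\lim_n T_n(\tau)/n = 0$. Set $s_n := n + T_n(\tau)$, so that $i_{s_n - 1} = i^1_0$ and $i_{s_n} = i^2_0$. First I would replace the last two digits in the finite string $(i_{n+1}, \ldots, i_{s_n - 2}, i^1_0, i^2_0)$ by $(i^1_\pm, i^2_\pm)$ to form strings $\eta_\pm$, and apply the tallness inequality to obtain that the three intervals $g_{\tau|n} \circ g_{\eta_0}([0,1])$ and $g_{\tau|n} \circ g_{\eta_\pm}([0,1])$ are pairwise disjoint subintervals of $g_{\tau|n}([0,1])$ arranged in a prescribed order. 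Since $y$ lies in the middle interval and each outer interval has diameter at least $a_0 \cdot c_0^{T_n(\tau)} \cdot \inf_{z \in [0,1]}|g'_{\tau|n}(z)|$, where $a_0$ is the minimum of the diameters of $g_{i^1_\pm}\circ g_{i^2_\pm}([0,1])$, the ball of radius
\[r_n := a_0 \cdot c_0^{T_n(\tau)} \cdot \inf_{z \in [0,1]}|g'_{\tau|n}(z)|\]
about $y$ lies inside $g_{\tau|n}((0,1))$. The Open Interval Condition then forces $\nu(B(y, r_n)) \leq \mu([\tau|n])$.

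To convert this into the stated dimension bound, I would estimate $\log r_n$ via the Tempered Distortion Property for $\{g_i\}_{i \in \A}$. The chain rule applied at $\Pi_v(\sigma^n \tau)$ gives $\log|g'_{\tau|n}(\Pi_v(\sigma^n \tau))| = -S_n(\psi)(\tau)$, and TDP bounds $\log\inf_{z}|g'_{\tau|n}(z)|$ within $o(n)$ of this value. Combined with $T_n(\tau)/n \to 0$, this yields $\log r_n = -S_n(\psi)(\tau) + o(n)$, and hence
\[\liminf_{n \to \infty}\frac{\log \nu(B(y, r_n))}{\log r_n} \geq \liminf_{n \to \infty}\frac{-\log \mu([\tau|n])}{S_n(\psi)(\tau)},\]
using the linear growth of $S_n(\psi)(\tau)$.

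Finally, to pass from the discrete sequence $r_n$ to all radii $r \to 0$, I would verify that $\log r_{n+1}/\log r_n \to 1$. This follows because $\log r_{n+1} - \log r_n$ differs from $\log |g'_{i_{n+1}}(\Pi_v(\sigma^{n+1}\tau))|$ by at most $|T_{n+1}(\tau) - T_n(\tau)|\,|\log c_0| + o(n) = o(n)$, while $\log r_n \to -\infty$ at a linear rate. I do not anticipate any serious obstacle here; the entire argument is a direct transposition of Lemma \ref{SymbDimPDimH}, and the tallness condition---being formulated pointwise in $x, y, z$ rather than on intervals---makes the sandwich construction particularly clean.
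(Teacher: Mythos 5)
Your argument is correct and is essentially the paper's own proof: the paper simply says to proceed as in Lemma \ref{SymbDimPDimH}, with the uniform bound $\max_{i\in\A}\||\log g_i'|\|_\infty$ (available because $\A$ is finite) playing the role of the variable quantity $R_n(\omega)$, which is exactly the simplification you identify via the constant $c_0$. The sandwiching by the $\eta_\pm$-intervals, the resulting bound $\nu(B(y,r_n))\leq\mu([\tau|n])$, the distortion estimate $\log r_n=-S_n(\psi)(\tau)+o(n)$, and the interpolation via $\log r_{n+1}/\log r_n\to 1$ all match the intended transposition.
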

\begin{proof}
Proceed as in Lemma \ref{SymbDimPDimH} with $b_{\min}:= \max\left\lbrace ||-\log g_{i}'||_{\infty}: i \in \A \right\rbrace$ in place of $W_n(\omega)$.
\end{proof}

\section{Convergence Lemmas}

\begin{lemma}\label{exists approximate measure}
Given $\mu \in \M_{\sigma}^*(\Sigma)$, $\epsilon>0$ and $m \in \N$ we may obtain $q \geq m$ and $\nu \in \B_{\sigma^q}^{\dagger}(\Sigma)$ satisfying,
\begin{enumerate}
 \vspace{2mm}
 \item[(i)] $\displaystyle \bigg|\frac{ h_{\nu \circ \pi^{-1}}( \sigma_v^q)}{\int S_q(\psi) d \nu\circ \pi^{-1}}- \frac{ h_{\mu \circ \pi^{-1}}( \sigma_v)}{\int\psi d \mu\circ \pi^{-1}} \bigg|<\epsilon$,
 \vspace{4mm}
 \item[(ii)] $\displaystyle \bigg|\frac{ h_{\nu}(\sigma^q|\pi^{-1}\mathscr{A})}{\int S_q(\chi) d \nu}-\frac{ h_{\mu}(\sigma|\pi^{-1}\mathscr{A})}{\int \chi d \mu}\bigg|<\epsilon$,
 \vspace{4mm}
 \item[(iii)] $\displaystyle \bigg|\int A_{q}(\varphi_k) d\nu-\int \varphi_k d\mu \bigg| < \epsilon$ for all $k \leq m$,
 \vspace{4mm}
\item[(iv)] $\displaystyle \var_{n}\left( A_{n}(\varphi_k)\right)< \frac{1}{m}$, for all $n\geq q$ and all $k \leq m$,
\vspace{4mm}
 \item[(v)] $\displaystyle \max \left\lbrace \var_{n}\left( A_{n}(\chi)\right), \var_{n}\left( A_{n}(\psi)\right)\right\rbrace < \frac{1}{m}$, for all $n\geq q(m)$.
 \vspace{2mm}
  \end{enumerate}
\end{lemma}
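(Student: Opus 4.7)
The plan is to use the standard device of approximating the compactly supported invariant measure $\mu$ by high-level Bernoulli measures, and then control everything via the uniform continuity afforded by the compact support.

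First, since $\mu\in\M^*_{\sigma}(\Sigma)$, I fix a finite $\F\subseteq\D$ with $\supp\mu\subseteq\F^{\N}$. In particular each of $\chi$, $\psi\circ\pi$ and $\varphi_1,\dots,\varphi_m$ restricts to a bounded uniformly continuous function on the compact space $\F^{\N}$, so the variations over cylinders (restricted to $\F^{\N}$) tend to $0$. Also $\chi$ and $\psi\circ\pi$ are bounded below by a positive constant on $\F^{\N}$. Properties (iv) and (v) can now be arranged for all $n\geq q$ by choosing $q$ large enough, using the tempered distortion property of $\chi$, $\psi$ (Definition \ref{IIFS} (TDP)) and of the $\varphi_k$ (Definition \ref{TDP}).

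For each such $q$ I then define $\nu=\nu_q\in\B^{\dagger}_{\sigma^q}(\Sigma)$ to be the $q$-level Bernoulli measure supported on $\F^{\N}$ whose first-block law matches $\mu$: $\nu_q([\omega])=\mu([\omega])$ for every $\omega\in\F^q$, with subsequent $q$-blocks independent. Since $\nu_q$ is Bernoulli with respect to $\sigma^q$ it lies in $\B^{\dagger}_{\sigma^q}(\Sigma)$ and is automatically compactly supported and ergodic. For the entropies, the Bernoulli structure gives
\[
h_{\nu_q}(\sigma^q)=-\sum_{\omega\in\F^q}\mu([\omega])\log\mu([\omega])=H_{\mu}(\mathcal{P}^q),
\]
where $\mathcal{P}^q$ is the partition of $\Sigma$ into $q$-cylinders in $\F^q$; by Kolmogorov--Sinai, $q^{-1}H_{\mu}(\mathcal{P}^q)\to h_{\mu}(\sigma)$. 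The same argument applied to the pushforward gives $q^{-1}h_{\nu_q\circ\pi^{-1}}(\sigma_v^q)\to h_{\mu\circ\pi^{-1}}(\sigma_v)$, and then the Abramov--Rokhlin formula yields $q^{-1}h_{\nu_q}(\sigma^q\mid\pi^{-1}\mathscr{A})\to h_{\mu}(\sigma\mid\pi^{-1}\mathscr{A})$.

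Next I handle the integrals. The point to watch is that $\nu_q$ is only $\sigma^q$-invariant, so one cannot simply say $\int S_q(\theta)\,d\nu_q=q\int\theta\,d\nu_q$. Instead, for $\theta\in\{\chi,\psi\circ\pi,\varphi_1,\dots,\varphi_m\}$ and any $l\in\{0,\dots,q-1\}$, I approximate $\theta(\sigma^l\omega)$ by its $(q-l)$-cylinder approximation on $\F^{\N}$: the error is at most $\var^{\F}_{q-l}(\theta)$, and the approximation depends only on coordinates lying in the first $q$-block, on which $\nu_q$ agrees with $\mu$. Hence
\[
\Bigl|\int\theta\circ\sigma^l\,d\nu_q-\int\theta\circ\sigma^l\,d\mu\Bigr|\leq 2\var^{\F}_{q-l}(\theta),
\]
and using $\sigma$-invariance of $\mu$, summing over $l=0,\dots,q-1$ and splitting the sum at $l=q-\lfloor\sqrt{q}\rfloor$ (small variation in one part, only $O(\sqrt q)$ boundary terms in the other, each bounded since $\theta$ is bounded on $\F^{\N}$) gives $|\int S_q(\theta)\,d\nu_q-q\int\theta\,d\mu|=o(q)$. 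Dividing by $q$ yields (iii) for large $q$, and combined with the positive lower bounds on $\int S_q(\chi)d\nu_q$ and $\int S_q(\psi)d\nu_q\circ\pi^{-1}$ together with the entropy asymptotics above, (i) and (ii) follow.

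The main obstacle is the mismatch between the $\sigma^q$-invariance of the Bernoulli approximant and the fact that the potentials are genuine functions of all coordinates; this is precisely what the ``boundary splitting'' above is designed to resolve, and it works because the compact support converts the tempered distortion property into genuine uniform continuity.
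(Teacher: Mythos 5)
Your construction coincides with the paper's: the $q$-block Bernoulli measure whose first-block law matches $\mu$, entropies handled via Kolmogorov--Sinai and the Abramov--Rokhlin formula, and (iv)--(v) secured by taking $q$ large via tempered distortion. The gap is in the integral step. You assert that each $\varphi_k$ ``restricts to a bounded uniformly continuous function on $\F^{\N}$'', and your per-term bound $|\int\varphi_k\circ\sigma^l\,d\nu_q-\int\varphi_k\circ\sigma^l\,d\mu|\le 2\var^{\F}_{q-l}(\varphi_k)$ together with the splitting at $l=q-\lfloor\sqrt q\rfloor$ requires both $\var_n(\varphi_k)\to 0$ on $\F^{\N}$ and a two-sided bound there. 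Neither follows from the hypotheses: the $\varphi_k$ are only assumed to satisfy the tempered distortion property, i.e. $\var_n(A_n(\varphi_k))\to 0$, and to be bounded on one side; they need not be continuous, and decay of $\var_n(A_n(\varphi_k))$ does not imply decay of $\var_n(\varphi_k)$ (for instance $\varphi+u-u\circ\sigma$ with $u$ bounded Borel and nowhere continuous satisfies the tempered distortion property while $\var_n$ stays bounded away from $0$, even on a finite-alphabet subshift). For such potentials your summed error is $O(q)$ rather than $o(q)$, so (iii) is not obtained. The repair is the paper's one-line estimate, which uses only the tempered distortion property: since $\nu_q$ and $\mu$ give the same mass to every $q$-cylinder, for any $g$ one has $|\int g\,d\nu_q-\int g\,d\mu|\le\var_q(g)$ by comparing cylinder by cylinder; apply this with $g=A_q(\varphi_k)$ (and likewise with $A_q(\chi)$ and $A_q(\psi\circ\pi)$), and use $\int A_q(\varphi_k)\,d\mu=\int\varphi_k\,d\mu$ by $\sigma$-invariance of $\mu$.

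A second, smaller omission: the lemma is used later with $\nu$ in the class $\B^{0}_{\sigma^q}(\Sigma)$, i.e. $q$-block Bernoulli measures with $\nu([\omega_1\cdots\omega_q])>0$ for every $(\omega_1,\dots,\omega_q)\in\D_0^q$; this positivity is what drives the return-time statements (vi)--(vii) of Lemma \ref{LOTS of limits} and hence conditions C(ii) and C(iv) in the lower bound. Your $\nu_q$ only charges blocks charged by $\mu$, which may miss $\D_0$ entirely. The paper finishes by perturbing the block law slightly so that every cylinder over $\D_0^q$ receives positive mass, with (i)--(iii) preserved by continuity; you need to add this final step.
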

\begin{proof} First not that since $\lim_{q \rightarrow \infty} \var_{q}\left( A_{q}(\chi)\right)=\lim_{q \rightarrow \infty} \var_{q}\left( A_{q}(\psi)\right)=0$ and $\lim_{q \rightarrow \infty} \var_{q}\left( A_{q}(\varphi_k)\right)=0$ for all $k$ we may choose $q_0 \geq m$ so that for $n \geq q_0$, $\max \left\lbrace \var_{n}\left( A_{n}(\chi)\right), \var_{n}\left( A_{n}(\psi)\right)\right\rbrace < \frac{1}{m}$ and $\var_{n}\left( A_{n}(\varphi_k)\right)< \frac{1}{m}$ for $k \leq m$.

Fix $\mu \in \M_{\sigma}^*(\Sigma)$. Given $q\in\N$ we let $\tilde{\nu}_q \in B_{\sigma^q}^*(\Sigma)$ denote the $k$-th level approximation of $\nu$. That
is, given a cylinder $[\omega_1\cdots \omega_{nq}]$ of length $nq$ we let
\begin{equation}
\tilde{\nu}_q([\omega_1\cdots\omega_{nq}]):=\prod_{l=0}^{n-1}\nu([\omega_{lq+1}\cdots\omega_{lq+q}]).
\end{equation}
By the Kolmogorov-Sinai theorem (see \cite{Walters} Theorem 4.18) we then have,
\begin{eqnarray} \label{k Bernoulli convergence i} 
\lim_{q\rightarrow\infty}q^{-1} h_{\tilde{\nu}_q}(\sigma^q)&=&h_{\nu}(\sigma),\\ \label{k Bernoulli convergence ii}
\lim_{q\rightarrow\infty}q^{-1}  h_{\tilde{\nu}_q\circ \pi^{-1}}(\sigma_v^q)&=& h_{\nu\circ \pi^{-1}}(\sigma_v).
\end{eqnarray}
Combining these two limits and applying the Abramov Rohklin formula \cite{Abramov Rokhlin} gives,
\begin{eqnarray}
 \label{k Bernoulli convergence iii} \lim_{q\rightarrow\infty}q^{-1} h_{\tilde{\nu}_q}(\sigma^q|\pi^{-1}\mathscr{A})&=&h_{\nu}(\sigma).
\end{eqnarray}
Since $\mu$ and $\tilde{\nu}_q$ agree on cylinders of length $q$ we have $\big|\int A_q (\varphi_k) d\tilde{\nu}_q-\int A_q (\varphi_k) d\mu\big|\leq \var_q A_k(\varphi_q)$, which tends to zero with $q$ by the tempered distortion property. Moreover, $\int A_q (\varphi_k) d\mu=\int \varphi_k d\mu$, since $\mu$ is $\sigma$-invariant. Hence,
\begin{eqnarray} \label{k Bernoulli convergence vi}
\lim_{k\rightarrow\infty}A_q(\varphi_k) d \tilde{\nu}_q  &=&\int \varphi d\nu,
\end{eqnarray}
for all $k \leq m$. The same argument also gives,
\begin{eqnarray} \label{k Bernoulli convergence iv}
\lim_{k\rightarrow\infty}A_q(\chi) d \tilde{\nu}_q  &=&\int \chi d\nu,\\ \label{k Bernoulli convergence v}
\lim_{k\rightarrow\infty}A_q(\psi) d \tilde{\nu}_q\circ \pi^{-1}  &=&\int \psi d\nu \circ \pi^{-1}.
\end{eqnarray}
Consequently, by taking $q \geq q_0$ sufficiently large, we may obtain $q \in \N$ and $\tilde{\nu} \in \B_{\sigma^q}^*(\Sigma)$ satisfying (i), (ii) and (iii) from the lemma. To obtain $\nu \in \B_{\sigma^q}^{0}(\Sigma)$ satisfying (i), (ii) and (iii) we  peturb $\tilde{\nu}$ slightly to obtain $\nu \in \B_{\sigma^q}^*(\Sigma)$  with   $\nu_q([\omega_1\cdots\omega_q])>0$ for
each $(\omega_1,\cdots,\omega_q)\in \D_0^q$ whilst using continuity to insure that (i), (ii) and (iii) still hold. Since $q \geq q_0$, (iv) and (v) also hold.
\end{proof}

Recall that we defined $\mathscr{A}$ to be the Borel sigma algebra on $\Sigma_v$. Given any Borel probability measure $\nu \in \M(\Sigma)$ and $\omega \in \Sigma$ we let $\nu^{\pi^{-1}\mathscr{A}}_{\omega}$ denote the conditional measure at $\omega$ \cite[Section 5.3]{ErgodicNumberTheory}. Since $\pi^{-1}\mathscr{A}$ is countably generated there exists $\Sigma' \subseteq \Sigma$ with $\nu(\Sigma')=1$ such that for all $\omega^1, \omega^2 \in \Sigma'$ with $\tau=\pi(\omega^1)=\pi(\omega^2)$ we have $\nu^{\pi^{-1}\mathscr{A}}_{\omega^1}=\nu^{\pi^{-1}\mathscr{A}}_{\omega^2}$ and $\nu^{\pi^{-1}\mathscr{A}}_{\omega^1}\left( \pi^{-1}\{\tau\}\right)=1$ \cite[Theorem 5.14]{ErgodicNumberTheory}. It follows that we can take a family of measures $\{\nu_{\tau}\}_{\tau \in \Sigma_v}\subset \M(\Sigma)$ with $\nu^{\tau} \left( \pi^{-1}\{\tau\}\right)=1$ for all $\tau \in \Sigma_v$ and
\begin{equation}
\nu=\int \nu^{\tau} d\nu \circ \pi^{-1}(\tau).
\end{equation}

We shall make use of the following well known result which is essentially contained within \cite[Lemma 9.3.1]{LYMED2}. We refer the reader to \cite{Parry}.

\begin{lemma}[Ergodic theorem of information theory]\label{Ergodic theorem of information theory} Suppose $T$ is an ergodic measure-preserving transformation of a Borel probability space $(X,\mathscr{B}, m)$. Let $\xi$ be a countable partition with $\bigvee^{\infty}_{i=0}T^{-l} \xi = \mathscr{B}$ and  $\mathscr{C} \subset \mathscr{B}$ a $T$-invariant sub sigma algebra. Then for $m$ almost every $x \in X$ we have,
\[ \lim_{n \rightarrow \infty}\frac{1}{n}I_m\left( \bigvee^{n-1}_{l=0} T^{-l}\xi \bigg|\mathscr{C}\right)(x) \rightarrow h_{m}(T|\mathscr{C}).\]
\end{lemma}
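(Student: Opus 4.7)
The plan is to prove this relative Shannon--McMillan--Breiman theorem in the classical way, via the chain rule for information, Doob's martingale convergence theorem, and a Maker-type Ces\`aro convergence result that exploits the ergodicity of $T$.

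The starting point is the chain rule for conditional information, which gives
\[
I_m\!\left( \bigvee^{n-1}_{l=0} T^{-l}\xi \,\bigg|\,\mathscr{C}\right)(x) \;=\; \sum_{k=0}^{n-1} I_m\!\left(T^{-k}\xi \,\bigg|\, \bigvee_{l=k+1}^{n-1} T^{-l}\xi \vee \mathscr{C}\right)(x).
\]
Setting $f_j(x) := I_m\bigl(\xi \bigm| \bigvee_{l=1}^{j} T^{-l}\xi \vee \mathscr{C}\bigr)(x)$ (with the convention $f_0 = I_m(\xi|\mathscr{C})$), the $T$-invariance of $\mathscr{C}$ and the equivariance of the conditional information functional identify the $k$-th summand above with $f_{n-1-k}(T^k x)$. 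Consequently
\[
\frac{1}{n} I_m\!\left( \bigvee^{n-1}_{l=0} T^{-l}\xi \,\bigg|\,\mathscr{C}\right)(x) \;=\; \frac{1}{n}\sum_{k=0}^{n-1} f_{n-1-k}(T^k x),
\]
so the task reduces to understanding this perturbed Birkhoff average.

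Next I would apply Doob's martingale convergence theorem to the increasing filtration $\bigvee_{l=1}^{j} T^{-l}\xi \vee \mathscr{C}$ to conclude that $f_j$ converges $m$-a.e.\ and in $L^1$ to
\[
f_\infty(x) := I_m\!\left(\xi \,\bigg|\, \bigvee_{l=1}^{\infty} T^{-l}\xi \vee \mathscr{C}\right)(x),
\]
whose integral is precisely $h_m(T \mid \mathscr{C})$ by the definition of conditional entropy (using that $\bigvee_{l=0}^\infty T^{-l}\xi = \mathscr{B}$). To convert the pointwise limit of $f_j$ into convergence of the mixed average $\frac{1}{n}\sum_{k=0}^{n-1} f_{n-1-k}\circ T^k$, I would invoke a Maker-type theorem: if $g_j \to g_\infty$ a.e.\ and $\sup_j g_j \in L^1(m)$, then for any ergodic $m$-preserving $T$ one has $\frac{1}{n}\sum_{k=0}^{n-1} g_{n-1-k}(T^k x) \to \int g_\infty \, dm$ almost everywhere. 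This result, combined with ergodicity of $T$, gives the claimed limit $h_m(T \mid \mathscr{C})$.

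The main obstacle is verifying the domination hypothesis $\sup_j f_j \in L^1(m)$, since $\xi$ is only assumed countable (not finite). The standard remedy is a conditional analogue of Chung's maximal inequality for information: writing $f_j = \sum_{A \in \xi} -\mathbf{1}_A \log m(A \mid \bigvee_{l=1}^{j} T^{-l}\xi \vee \mathscr{C})$ and applying Doob's $L^1$ maximal inequality atom-by-atom, one obtains the tail bound $m\{\sup_j f_j > t\} \le C \sum_{A \in \xi} m(A) e^{-t}$, which yields integrability provided the static entropy $H_m(\xi)$ is finite; the reduction to this case follows by truncating $\xi$ and exploiting the subadditivity of information. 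Once the domination is in place, the Ces\`aro convergence above is routine and the identification with $h_m(T \mid \mathscr{C})$ completes the proof, as in \cite{Parry}.
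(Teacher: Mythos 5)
Your argument is the standard proof of the relative Shannon--McMillan--Breiman theorem --- chain rule for conditional information, Doob's increasing-martingale convergence, the Chung--Neveu maximal inequality for the $L^1$ domination, and a Maker-type ergodic theorem for the averages $\frac{1}{n}\sum_{k=0}^{n-1} f_{n-1-k}\circ T^k$ --- and this is essentially the same route as the paper, whose proof simply defers to Parry \cite{Parry} and Ledrappier--Young \cite{LYMED2}, where exactly this scheme is carried out. One caveat: your closing remark that truncating $\xi$ handles the case $H_m(\xi)=\infty$ is not convincing (refinement of partitions only increases information, so truncation yields the lower bound but not the upper one), but this case is immaterial to the paper's use of the lemma, since the measures to which it is applied are supported on finitely many length-$q$ cylinders and the relevant partitions therefore have finite entropy, as in the cited sources.
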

\begin{proof} See the proof of\cite[Lemma 9.3.1]{LYMED2}. The Lemma is a mild generalisation of \cite[Theorem 7, Chapter 2]{Parry} and may proven in the same way.
\end{proof}

Fix some finite subset $\D_{0}\subseteq \D$ such that,
\begin{enumerate}
 \vspace{2mm}
 \item[(i)] If $\D$ is tall then there exists some $j^1, j^2 \in \N$ with $(i^1_0, j^1), (i^2_0, j^1) \in \D_0  $, where $i^1_0, i^2_0\in \A$ are as in the definition of $T_n(\omega)$,
 \vspace{2mm} 
 \item[(ii)] If $\D$ is wide then $(i',j^1_0),(i',j^2_0) \in \D_0$, where $(i',j^1_0), (i',j^2_0)\in \D$ are as in the definition of $W_n(\omega)$.
 \vspace{2mm} 
 \end{enumerate}

We shall let $ \B_{\sigma^q}^{0}(\Sigma)$ denote the set of $\mu \in \B_{\sigma^q}^{*}(\Sigma)$ which satisfy, $\mu([\omega_1,\cdots, \omega_q])>0$ for all $(\omega_1, \cdots, \omega_q) \in \D_{0}^q$.

\begin{lemma}\label{LOTS of limits} Given $\nu \in \E_{\sigma^q}^{0}(\Sigma)$, the following convergences hold for $\nu$ almost every $\omega \in \Sigma$,
\begin{enumerate}
 \vspace{3mm}
 \item[(i)] $\lim_{n \rightarrow \infty} A_{nq}(\psi)(\tau)=\int A_q(\psi) d \nu\circ \pi^{-1}$,
 \vspace{3mm}
 \item[(ii)] $\lim_{n \rightarrow \infty} A_{nq}(\chi)(\omega)=\int A_q(\chi) d \nu$,
 \vspace{3mm}
 \item[(iii)] $\lim_{n \rightarrow \infty}A_{nq}(\varphi_k)(\omega) = \int A_{q}(\varphi_k) d\nu$ for all $k \leq m$, 
 \vspace{3mm}
 \item[(iv)] $\lim_{n \rightarrow \infty} -n^{-1}\log \nu \circ \pi^{-1}([\pi(\omega)_1\cdots \pi(\omega)_{nq}])= h_{\nu \circ \pi^{-1}}(\sigma_v^q)$,
 \vspace{3mm}
 \item[(v)] $\lim_{n \rightarrow \infty} -n^{-1} \log \nu^{\pi(\omega)}([\omega_1\cdots \omega_{nq}])= h_{\nu}(\sigma^q|\pi^{-1}\mathscr{A})$,
 \vspace{3mm}
 \item[(vi)] $\lim_{n \rightarrow \infty} n^{-1} T_{n}(\tau)=0,$ provided $\D$ is tall,
 \vspace{3mm}
 \item[(vii)] $\lim_{n \rightarrow \infty} n^{-1} W_{n}(\omega)=0,$ provided $\D$ is wide.
 \vspace{3mm}
  \end{enumerate}
 \end{lemma}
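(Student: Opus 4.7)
The plan is to treat (i)--(v) as direct consequences of the Birkhoff ergodic theorem together with the Shannon--McMillan--Breiman theorem and the Ergodic Theorem of Information Theory already cited as Lemma~\ref{Ergodic theorem of information theory}, and to deduce (vi) and (vii) from Birkhoff applied to the indicator of a suitable two-symbol cylinder. The common preliminary is that since $\pi \circ \sigma = \sigma_v \circ \pi$ and $\nu$ is ergodic for $\sigma^q$, the push-forward $\nu \circ \pi^{-1}$ is ergodic for $\sigma_v^q$.

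For (i)--(iii) the key identity is
\begin{equation*}
A_{nq}(\varphi)(\omega) \;=\; \frac{1}{n}\sum_{k=0}^{n-1} A_q(\varphi)\bigl(\sigma^{kq}\omega\bigr),
\end{equation*}
which I would apply with $\varphi=\psi$ on $(\Sigma_v,\sigma_v^q,\nu\circ\pi^{-1})$ to obtain (i), and with $\varphi=\chi$ or $\varphi=\varphi_k$ on $(\Sigma,\sigma^q,\nu)$ to obtain (ii) and (iii). Integrability of $A_q(\chi)$ and $A_q(\psi)$ is immediate from the Uniform Contraction Condition, while integrability of $A_q(\varphi_k)$ uses the one-sided boundedness of $\varphi_k$ and the fact that $\nu$ is compactly supported. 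Birkhoff's ergodic theorem then delivers the claimed almost-sure limits.

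Statement (iv) is the Shannon--McMillan--Breiman theorem for the ergodic system $(\Sigma_v,\sigma_v^q,\nu\circ\pi^{-1})$ with respect to the generating partition of $\Sigma_v$ into $q$-cylinders. For (v) I would invoke Lemma~\ref{Ergodic theorem of information theory} with $(X,\mathscr{B},m)=(\Sigma,\mathcal{B}(\Sigma),\nu)$, $T=\sigma^q$, partition $\xi=\xi_q$ the $q$-cylinder partition, and sub-$\sigma$-algebra $\mathscr{C}=\pi^{-1}\mathscr{A}$; invariance of $\pi^{-1}\mathscr{A}$ under $\sigma$ (and hence under $\sigma^q$) follows from $\pi\circ\sigma=\sigma_v\circ\pi$, and the standard disintegration identifies the conditional information $I_\nu\bigl(\bigvee_{l=0}^{n-1}\sigma^{-lq}\xi_q\mid\pi^{-1}\mathscr{A}\bigr)(\omega)$ with $-\log \nu^{\pi(\omega)}([\omega_1\cdots\omega_{nq}])$ for $\nu$-almost every $\omega$.

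For (vi) and (vii) the argument is essentially the same in both cases. I would fix the two-symbol cylinder encoding the required pattern: in the tall case $B=\{\tau\in\Sigma_v:\tau_\ell=i^1_0,\,\tau_{\ell+1}=i^2_0\}$ for a suitable $\ell$, and in the wide case the analogous $B\subset\Sigma$ recording $(i',j^1_0),(i',j^2_0)$ at positions $\ell,\ell+1$. The hypotheses on $\D_0$ together with the assumption $\nu([\omega_1\cdots\omega_q])>0$ for every $(\omega_1,\ldots,\omega_q)\in\D_0^q$ force $\nu\circ\pi^{-1}(B)>0$, respectively $\nu(B)>0$; when $q=1$ one uses the Bernoulli independence of adjacent symbols under $\sigma^q$ to handle patterns straddling a block boundary. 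Birkhoff applied to $\one_B$ then produces a positive asymptotic frequency $c>0$ of pattern occurrences along $\nu$-typical orbits, so for any $\epsilon>0$ one has, eventually,
\begin{equation*}
S_{\lceil(1+\epsilon)n\rceil}\one_B - S_n\one_B \;\sim\; c\epsilon n \;>\; 0,
\end{equation*}
which forces at least one occurrence of the pattern in the window $(n,(1+\epsilon)n]$. Hence $T_n(\tau)\le\epsilon n$, respectively $W_n(\omega)\le\epsilon n$, eventually, and letting $\epsilon\downarrow 0$ completes the proof. The whole argument is just a matter of fitting the hypotheses to standard ergodic-theoretic tools; the only point requiring any real care is the $q=1$ case of the last step, since there the relevant two-symbol cylinder genuinely crosses a Bernoulli block boundary.
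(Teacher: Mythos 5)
Your proposal is correct and follows essentially the same route as the paper: Birkhoff's ergodic theorem for (i)--(iii), the ergodic theorem of information theory (with the trivial $\sigma$-algebra for (iv) and with $\pi^{-1}\mathscr{A}$ for (v)), and positive asymptotic frequency of the relevant pattern for (vi)--(vii). The only cosmetic difference is that the paper derives (vi)--(vii) from the frequencies of aligned $q$-blocks in $\D_0^q$, while you work with a two-symbol cylinder and a window argument, with a slightly more explicit treatment of the $q=1$ case via Bernoulli independence.
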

\begin{proof} Limits (i)-(iii) follow from Birkhoff's ergodic theorem. Indeed since $\sigma^q$ is ergodic with respect to $\nu$, $\sigma_v^q$ is ergodic with respect to $\nu \circ \pi^{-1}$.  

If we let $\xi_v$ denote the partition of $\Sigma_v$ into cylinder sets of length $q$ and $\mathcal{N}:=\left\lbrace \Sigma_v, \emptyset \right\rbrace$ denote the null sigma algebra, then for each $\tau \in \Sigma_v$ we have,
\begin{equation*}
I_{\nu \circ \pi^{-1}}\left( \bigvee^{n-1}_{l=0} \sigma^{-lq}\xi_v \bigg| \hspace{1mm}\mathcal{N} \right)(\tau)=\log \nu \circ \pi^{-1}([\tau_1\cdots \tau_{nq}]).
\end{equation*}
Thus (i) follows from Lemma \ref{Ergodic theorem of information theory}. Similarly if we let $\xi_h$ denote the partition of $\Sigma$ into cylinder sets of length $q$ then for each $\omega \in \Sigma$ we have,
\begin{equation*}
I_{\nu}\left( \bigvee^{n-1}_{l=0} \sigma^{-lq}\xi_h \bigg| \pi^{-1} \mathscr{A} \right)(\omega)=\log \nu^{\pi(\omega)}([\omega_1\cdots \omega_{nq}]),
\end{equation*}
so (ii) also follows from Lemma \ref{Ergodic theorem of information theory}. 

For each $(\eta_1, \cdots, \eta_q) \in \D_0^q$ we have, 
\begin{align*}
\lim_{n \rightarrow \infty} \frac{1}{n} \#\left\lbrace  l < n: (\omega_{lq+1}, \cdots, \omega_{lq+q}=(\eta_1, \cdots, \eta_q) \right\rbrace\\ =\nu([\eta_1, \cdots, \eta_q])>0.
\end{align*}
Limits (vi) and (vii) follow.
\end{proof}

\begin{lemma}\label{Egorov type stuff} Given $\nu \in \E_{\sigma^q}^{0}(\Sigma)$, supported on some compact set $K$, along with constants $\delta, \epsilon>0$ and $m \in \N$, there exists $N \in \N$ and $U \subseteq \Sigma_v$ with $\nu\circ \pi^{-1}(U)>1-\delta$ such that for all $\tau \in U$ and all $n\geq N$ we have,
\begin{enumerate}
 \vspace{2mm}
 \item[(i)] $\displaystyle \bigg|\frac{\log \nu \circ \pi^{-1}([\tau_1\cdots \tau_{nq}])}{S_{nq}(\psi)(\tau)}+\frac{ h_{\nu \circ \pi^{-1}}( \sigma_v^q)}{\int S_q(\psi) d \nu\circ \pi^{-1}}\bigg|<\epsilon$,
 \vspace{4mm}
 \item[(ii)] $\displaystyle T_{n}(\tau)< n \epsilon,$ provided $\D$ is tall.
 \vspace{2mm}
 \end{enumerate}
 Moreover, for each $\tau \in U$ there exists $V_{\tau} \subseteq \pi^{-1}\{\tau\} \cap K$ with $\nu^{\tau}(V_{\tau})>1-\delta$ and for all $\omega \in V_{\tau}$ and $n \geq N$ we have,
 \begin{enumerate}
 \vspace{2mm}
 \item[(iii)] $\displaystyle \bigg|\frac{\log \nu^{\tau}([\omega_1\cdots \omega_{nq}])}{S_{nq}(\chi)(\omega)}+\frac{ h_{\nu}(\sigma^q|\pi^{-1}\mathscr{A})}{\int S_q(\chi) d \nu}\bigg|<\epsilon$,
 \vspace{4mm}
 \item[(iv)] $\displaystyle W_{n}(\omega)< n \epsilon,$ provided $\D$ is wide,
 \vspace{4mm}
 \item[(v)] $\displaystyle \bigg| A_{nq}(\varphi_k)(\omega) - \int A_{q}(\varphi_k) d\nu \bigg| < \epsilon$ for all $k \leq m$.
 \vspace{2mm}
  \end{enumerate}
 \end{lemma}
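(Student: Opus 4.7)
The proof is a two-stage application of Egorov's theorem, bridged by a disintegration/Markov-inequality step to upgrade almost-sure convergence on $\Sigma$ to almost-sure convergence on almost every vertical fibre.

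\emph{Stage one (the fibre-constant conclusions).} The integrands appearing in (i) and (ii) depend on $\omega$ only through $\tau=\pi(\omega)$. By parts (i), (iv), and (vi) of Lemma \ref{LOTS of limits}, the three quantities
\[
A_{nq}(\psi)(\tau),\qquad -n^{-1}\log\nu\circ\pi^{-1}([\tau_1\cdots\tau_{nq}]),\qquad n^{-1}T_n(\tau)
\]
converge $\nu\circ\pi^{-1}$-almost surely to the prescribed limits (the third only when $\D$ is tall). Applying Egorov's theorem in $(\Sigma_v,\nu\circ\pi^{-1})$ produces $U_1\subseteq\Sigma_v$ with $\nu\circ\pi^{-1}(U_1)>1-\delta/2$ and an integer $N_1$ such that (i) and (ii) of the lemma hold uniformly on $U_1$ for every $n\geq N_1$.

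\emph{Stage two (the fibre-varying conclusions).} The integrands appearing in (iii), (iv), (v) genuinely depend on $\omega$. By parts (ii), (iii), (v), (vii) of Lemma \ref{LOTS of limits}, the corresponding quantities converge $\nu$-almost surely. Setting $\eta:=(\delta/2)^2$, another application of Egorov in $(\Sigma,\nu)$ yields $W\subseteq K$ with $\nu(W)>1-\eta$ and an integer $N_2$ such that the inequalities in (iii), (iv), (v) all hold for $\omega\in W$ and $n\geq N_2$. To descend this uniform estimate to almost every fibre, I would use the disintegration $\nu=\int\nu^{\tau}\,d\nu\circ\pi^{-1}(\tau)$ and apply Markov's inequality to the non-negative function $\tau\mapsto 1-\nu^{\tau}(W)$:
\[
(\delta/2)\,\nu\circ\pi^{-1}\bigl(\{\tau:\nu^{\tau}(W)\leq 1-\delta/2\}\bigr)\leq \int(1-\nu^{\tau}(W))\,d\nu\circ\pi^{-1}(\tau)=1-\nu(W)<(\delta/2)^2.
\]
Consequently $U_2:=\{\tau\in\Sigma_v:\nu^{\tau}(W)>1-\delta/2\}$ satisfies $\nu\circ\pi^{-1}(U_2)>1-\delta/2$.

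\emph{Assembly.} Set $U:=U_1\cap U_2$, $N:=\max(N_1,N_2)$, and $V_{\tau}:=W\cap\pi^{-1}\{\tau\}$ for each $\tau\in U$. A union bound gives $\nu\circ\pi^{-1}(U)>1-\delta$, while for every $\tau\in U$ the identity $\nu^{\tau}(V_{\tau})=\nu^{\tau}(W)$ (valid because $\nu^{\tau}$ is supported on $\pi^{-1}\{\tau\}$) gives $\nu^{\tau}(V_{\tau})>1-\delta/2>1-\delta$. The membership $\omega\in V_{\tau}\subseteq W$ forces (iii), (iv), (v), while $\tau\in U\subseteq U_1$ forces (i) and (ii); this yields all five conclusions for every $\tau\in U$, every $\omega\in V_{\tau}$, and every $n\geq N$. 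The only non-routine point is the Markov step above: Egorov naturally produces uniformity on a single set of almost full $\nu$-measure, and we need uniformity along almost every conditional slice; this is legitimate precisely because $\pi^{-1}\mathscr{A}$ is countably generated and admits the regular disintegration $\{\nu^{\tau}\}_{\tau\in\Sigma_v}$ recorded in the paragraph preceding Lemma \ref{Ergodic theorem of information theory}. Everything else is routine bookkeeping.
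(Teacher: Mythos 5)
Your proposal is correct, and it implements the second half of the argument differently from the paper. The paper also starts with Egorov on $(\Sigma_v,\nu\circ\pi^{-1})$ for the base quantities (i)--(ii), but for (iii)--(v) it first passes the $\nu$-a.e.\ convergences of Lemma \ref{LOTS of limits} to $\nu^{\tau}$-a.e.\ convergences on almost every fibre, then applies Egorov separately on each fibre $(\pi^{-1}\{\tau\},\nu^{\tau})$ to produce $V_{\tau}$ with a fibre-dependent threshold $N(\tau)$, and finally shrinks $U$ once more (losing measure $\delta/2$) so that $N:=\max\{N(\tau):\tau\in U\}$ is finite. You instead apply Egorov once on $(\Sigma,\nu)$ to get a single set $W$ with a single threshold $N_2$, and transfer this to the fibres via the disintegration $\nu=\int\nu^{\tau}\,d\nu\circ\pi^{-1}(\tau)$ and Markov's inequality applied to $\tau\mapsto 1-\nu^{\tau}(W)$, taking $V_{\tau}:=W\cap\pi^{-1}\{\tau\}$. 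Your route buys a uniform $N$ automatically and sidesteps the uniformization of $\tau\mapsto N(\tau)$ (a measurable-selection/exhaustion point the paper treats rather briskly), at the modest cost of the $(\delta/2)^2$ bookkeeping and of needing measurability of $\tau\mapsto\nu^{\tau}(W)$, which is part of the standard disintegration machinery the paper has already set up; the paper's fibrewise Egorov, in exchange, works directly with the conditional measures and needs no Chebyshev-type loss on the set of good fibres. Both arguments use exactly the same inputs (Lemma \ref{LOTS of limits}, the disintegration $\{\nu^{\tau}\}$, and the positivity of $\int S_q(\chi)\,d\nu$ and $\int S_q(\psi)\,d\nu\circ\pi^{-1}$ needed to pass from separate convergence of numerator and denominator to convergence of the ratios), so your proof is a valid substitute.
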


\begin{proof}
By Lemma \ref{LOTS of limits} (i), (iv) and (vi) combined with Egorov's theorem, there exists a set $U'' \subset \Sigma_v$ with $\nu\circ \pi^{-1}(U'')>1-\delta/2$,
such that for all $\tau \in U''$ and all $n\geq N'$ we have,
\begin{enumerate}
 \vspace{2mm}
 \item[(i)] $\displaystyle \bigg|\frac{\log \nu \circ \pi^{-1}([\tau_1\cdots \tau_{nq}])}{S_{nq}(\psi)(\tau)}+\frac{ h_{\nu \circ \pi^{-1}}( \sigma_v^q)}{\int S_q(\psi) d \nu\circ \pi^{-1}}\bigg|<\epsilon$,
 \vspace{4mm}
 \item[(ii)] $\displaystyle T_{n}(\tau)< n \epsilon,$ provided $\D$ is tall.
 \vspace{2mm}
 \end{enumerate}By Lemma \ref{LOTS of limits} (ii), (iii), (v) and (vii) we may take $U' \subset U''$ with $\nu\circ \pi^{-1}(U')=\nu\circ \pi^{-1}(U'')$ such 
for all $\tau \in U'$, $\nu^{\tau}$ is supported on $\pi^{-1}\{\tau\} \cap K$ and for $\nu^{\tau}$ almost all $\omega \in \subseteq \pi^{-1}\{\tau\}\cap K$ we have,
 \begin{enumerate}
 \vspace{2mm}
 \item[(iii)'] $\displaystyle \lim_{n \rightarrow \infty}\frac{\log \nu^{\tau}([\omega_1\cdots \omega_{nq}])}{S_{nq}(\chi)(\omega)}=\frac{ h_{\nu}(\sigma^q|\pi^{-1}\mathscr{A})}{\int S_q(\chi) d \nu}$,
 \vspace{4mm}
 \item[(iv)'] $\displaystyle \lim_{n \rightarrow \infty} n^{-1} W_{n}(\omega)=0,$ provided $\D$ is wide,
 \vspace{4mm}
 \item[(iv)'] $\displaystyle \lim_{n \rightarrow \infty} A_{nq}(\varphi_k)(\omega) = \int A_{q}(\varphi_k) d\nu$ for all $k \leq m$.
 \vspace{4mm}
 \end{enumerate}
Applying Egorov's theorem once more, we obtain for each $\tau \in U'$ a set $V_{\tau} \subseteq \pi^{-1}\{\tau\}\cap K$ with $\nu^{\tau}(V_{\tau})>1-\delta$ such that for all $\omega \in V_{\tau}$ and $n \geq N'(\tau)$ we have,
 \begin{enumerate}
 \vspace{2mm}
 \item[(iii)] $\displaystyle \bigg|\frac{\log \nu^{\tau}([\omega_1\cdots \omega_{nq}])}{S_{nq}(\chi)(\omega)}+\frac{ h_{\nu}(\sigma^q|\pi^{-1}\mathscr{A})}{\int S_q(\chi) d \nu}\bigg|<\epsilon$,
 \vspace{4mm}
 \item[(iv)] $\displaystyle W_{n}(\omega)< n \epsilon,$ provided $\D$ is wide,
 \vspace{4mm}
 \item[(iv)] $\displaystyle \bigg| A_{nq}(\varphi_k)(\omega) - \int A_{q}(\varphi_k) d\nu \bigg| < \epsilon$ for all $k \leq m$.
 \vspace{2mm}
  \end{enumerate}
Now choose $U\subseteq U'$ with  $\nu\circ \pi^{-1}(U)>\nu\circ \pi^{-1}(U')-\delta/2>1-\delta$ for which,
\begin{equation*}
N:= \max \left\lbrace N(\tau) : \tau \in U \right\rbrace <\infty.
\end{equation*}
\end{proof}

\section{Proof of the lower bound}\label{Proof LB}

Throughout the proof of the lower bound we shall fix some $\alpha=(\alpha_k)_{k \in \N} \subset \R \cup \{\infty\}$. We define,
\begin{equation*}
\delta(\alpha):=\lim_{m \rightarrow \infty} \sup \left\lbrace D(\mu):\mu \in \M^*_{\sigma}(\Sigma),  \int \varphi_k d\mu \in B_m(\alpha_k) \text{ for } k \leq m \right\rbrace.
\end{equation*}
In this section we shall prove the following.
\begin{prop} $\dim J_{\varphi}(\alpha)\geq \delta(\alpha)$.
\end{prop}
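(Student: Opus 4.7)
The plan is to use the measure-concatenation technique of Gelfert and Rams described in the introduction. For each $m \in \N$, invoke the definition of $\delta(\alpha)$ to select a measure $\mu_m \in \M_\sigma^*(\Sigma)$ with $D(\mu_m) > \delta(\alpha) - 1/m$ and $\int \varphi_k d\mu_m \in B_m(\alpha_k)$ for all $k \leq m$. Then apply Lemma \ref{exists approximate measure} (ensuring the approximating measure lives in $\B_{\sigma^{q_m}}^0(\Sigma)$ by a slight perturbation of support) to obtain a compactly supported Bernoulli-type measure $\nu_m$ at some level $q_m \geq m$, with $D_{q_m}(\nu_m)$, $\int A_{q_m}(\chi)d\nu_m$, $\int A_{q_m}(\psi)d\nu_m\circ\pi^{-1}$, and $\int A_{q_m}(\varphi_k)d\nu_m$ all approximating the corresponding quantities for $\mu_m$ to within $1/m$, and with $\var_n A_n(\chi)$, $\var_n A_n(\psi)$, $\var_n A_n(\varphi_k)$ all bounded by $1/m$ for $n \geq q_m$.

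Next, for each $m$ apply Lemma \ref{Egorov type stuff} with suitably small parameters $\delta_m, \epsilon_m \downarrow 0$ to produce a time $N_m$, a large vertical set $U_m \subseteq \Sigma_v$, and fiber sets $V^m_\tau$ on which the symbolic local-dimension ratios, the Birkhoff averages $A_{nq_m}(\varphi_k)$ (for $k \leq m$), and the asymptotic smallness of $T_n, W_n$ all hold uniformly from time $N_m$. Then select block lengths $M_m q_m \gg N_m + \sum_{j < m} M_j q_j$ and set $T_m := \sum_{j \leq m} M_j q_j$. Define the concatenated measure $\mu$ on $\Sigma$ by declaring its cylinder values to factor block-by-block: on coordinates in the block $I_m := (T_{m-1}, T_m]$ the sampling is the $q_m$-level Bernoulli measure $\nu_m$, and blocks are independent. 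The rapid growth of $M_m$ guarantees that, for $\mu$-typical $\omega$, all three of the following hold: (a) $A_n(\varphi_k)(\omega) \to \alpha_k$ for every $k$, since eventually $\omega$ is sampled according to $\nu_m$ with $m \geq k$ arbitrarily large; (b) the symbolic ratios $-n^{-1}\log \mu^{\pi\omega}([\omega|n])/A_n(\chi)(\omega)$ and $-n^{-1}\log\bar\mu([\pi(\omega)|n])/A_n(\psi)(\pi\omega)$ have liminf at least the horizontal and vertical parts of $D(\nu_m) - o(1)$; (c) $T_n(\pi\omega), W_n(\omega) = o(n)$, so Lemmas \ref{SymbDimPDimH} and \ref{SymbDimPDimV} are applicable at $\omega$.

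To convert these symbolic estimates into a geometric local-dimension lower bound at $\Pi(\omega)$, I would exploit the product structure of each Bernoulli block. This structure yields the crucial factorisation
\[
\mu\bigl([\omega_1\cdots\omega_n]\cap \sigma^{-n}\pi^{-1}[i_{n+1}\cdots i_{L_n(\omega)}]\bigr) = \mu^{\pi\omega}([\omega_1\cdots\omega_n]) \cdot \bar\mu([i_1\cdots i_{L_n(\omega)}]),
\]
because for Bernoulli measures the conditional $\mu^\tau([\omega_1\cdots\omega_n])$ depends on $\tau$ only through its first $n$ coordinates. Combining this with $S_n(\chi)(\omega) \sim S_{L_n(\omega)}(\psi)(\pi\omega)$ (built into the definition of $L_n$), we obtain
\[
\liminf_{n\to\infty}\frac{-\log \mu\circ\Pi^{-1}(B_n(\omega))}{-\log \diam B_n(\omega)} \geq \frac{h_{\nu_m}(\sigma^{q_m}\mid \pi^{-1}\mathscr{A})}{\int S_{q_m}(\chi)d\nu_m} + \frac{h_{\nu_m\circ \pi^{-1}}(\sigma_v^{q_m})}{\int S_{q_m}(\psi)d\nu_m\circ\pi^{-1}} = D_{q_m}(\nu_m) \geq \delta(\alpha) - o(1).
\]
The transition from the sequence $r_n = \diam B_n(\omega)$ to arbitrary $r \to 0$ proceeds exactly as in the final paragraph of Lemma \ref{SymbDimPDimH}, using $\log r_{n+1}/\log r_n \to 1$. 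Then Lemma \ref{dim lem} applied to $\mu \circ \Pi^{-1}$ on the set of typical $\omega$'s, which lies in $J_\varphi(\alpha)$ by (a), gives $\dim J_\varphi(\alpha) \geq \delta(\alpha)$.

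The main obstacle will be the delicate coordination of the block lengths $M_m$ with the Egorov times $N_m$: we need the sampling from $\nu_m$ to last long enough (so that the uniform estimates from Lemma \ref{Egorov type stuff} take effect) while also ensuring that the preceding finite prefix of sampling from $\nu_1, \ldots, \nu_{m-1}$ has negligible effect on every Birkhoff average and every symbolic-dimension ratio simultaneously. A secondary issue is to dispose of degenerate cases where $\D$ is neither tall nor wide (either $\Lambda$ is a point or the projection reduces to a one-dimensional problem where only one of Lemmas \ref{SymbDimPDimH}, \ref{SymbDimPDimV} is needed), and to verify measurability and $\sigma$-finiteness details of the fiber decomposition $\mu = \int \mu^\tau d\bar\mu(\tau)$ underpinning the factorisation above.
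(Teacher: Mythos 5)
Your construction phase coincides with the paper's: the same selection of $\mu_m$ realising $\delta(\alpha)$, the same passage to level-$q(m)$ Bernoulli measures via Lemma \ref{exists approximate measure}, the same Egorov-type uniformisation via Lemma \ref{Egorov type stuff}, and the same concatenation along rapidly growing blocks. Where you diverge is the final dimension count: you want a single concatenated measure $\mu$ on $\Sigma$ and a genuinely two-dimensional local-dimension estimate for $\mu\circ\Pi^{-1}$ via approximate squares and Lemma \ref{dim lem}. The paper never does this; it builds a vertical measure $\W$ on $\Sigma_v$ and, for each good $\tau$, a fibre measure $\Z^{\tau}$ supported on $\pi^{-1}\{\tau\}$, proves two \emph{one-dimensional} local-dimension bounds (Lemmas \ref{SymbDimPDimH} and \ref{SymbDimPDimV}, fed by Lemma \ref{Symbdimest} and Lemma \ref{No borders lemma}), and then adds the exponents with Marstrand's slicing lemma (Lemma \ref{Slicing Lemma}). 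This is not a cosmetic difference: the slicing route is chosen precisely to avoid the step at which your argument has a gap.

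The gap is the conversion of the approximate-square estimate $\liminf_n\frac{-\log\mu\circ\Pi^{-1}(B_n(\omega))}{-\log\diam B_n(\omega)}\geq\delta(\alpha)-o(1)$ into a bound on $\nu(B(x,r))$ for Euclidean balls, which is what Lemma \ref{dim lem} needs. You assert this ``proceeds exactly as in the final paragraph of Lemma \ref{SymbDimPDimH}'', but that paragraph only interpolates between the discrete radii $r_n$ (using $\log r_{n+1}/\log r_n\to1$); it does not address the real issue, namely that a ball of radius comparable to $\diam B_n(\omega)$ centred at $\Pi(\omega)$ is in general \emph{not} contained in $B_n(\omega)$ and may meet many neighbouring approximate squares. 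Under the open interval condition alone the cylinder images may be adjacent, and with countably many maps a ball can meet infinitely many squares of the relevant generation whose $\mu\circ\Pi^{-1}$-measures are not comparable to that of $B_n(\omega)$; so $\nu(B(x,r_n))\leq C\,\mu([\omega|n]\cap\sigma^{-n}\pi^{-1}[i_{n+1}\cdots i_{L_n(\omega)}])$ is exactly what has to be proved, not assumed. The paper's mechanism for this kind of containment is the wide/tall gap construction ($W_n$, $R_n$, $T_n$ together with (\ref{enclosed interval})), which guarantees that a slightly shrunken ball stays inside the open projected cylinder — but it is only carried out in each coordinate direction separately, for measures living on a single horizontal fibre or on $\Sigma_v$. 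To make your two-dimensional argument rigorous you would need a simultaneous version: a horizontal gap at time $n$ \emph{and} a vertical gap at time $L_n(\omega)$ (i.e.\ control of $T$ along the subordinate times $L_n(\omega)$ of $\pi(\omega)$), plus the boundary/adjacency bookkeeping showing that points of $\Lambda$ inside the shrunken ball must share the first $n$ symbols of $\omega$ and the first $L_n(\omega)$ vertical digits. This is plausibly completable, but it is a substantive missing argument, and it is precisely what the combination of Lemmas \ref{SymbDimPDimH}, \ref{SymbDimPDimV} and \ref{Slicing Lemma} is designed to circumvent. (Your factorisation of $\mu$ over an approximate square is fine for block-Bernoulli measures, and your handling of the Birkhoff averages and of the degenerate not-tall/not-wide cases matches the paper; the ball-versus-square comparison is the one genuinely missing ingredient.)
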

Clearly we may assume that $\delta(\alpha)>-\infty$. Thus, by a simple compactness argument there exists $\delta_h(\alpha), \delta_v(\alpha) \in \R$ with $\delta_h(\alpha)+ \delta_v(\alpha)=\delta(\alpha)$, along with a sequence of measures $\{\mu_m\}_{m \in \N} \subset \M^*_{\sigma}(\Sigma)$ with 
 \begin{enumerate}
 \vspace{2mm}
  \item[A(i)] $\displaystyle  \frac{ h_{\mu_m \circ \pi^{-1}}( \sigma_v)}{\int \psi d \mu_m \circ \pi^{-1}} >\delta_v(\alpha)-\frac{1}{3m}$,
 \vspace{4mm}
 \item[A(ii)] $\displaystyle \frac{h_{\mu_m}(\sigma|\pi^{-1}\mathscr{A})}{\int \chi d \mu_m }>\delta_h(\alpha)-\frac{1}{3m}$,
 \vspace{4mm}
 \item[A(iii)] $\displaystyle \int \varphi_k d\mu_m \in B_{3m}(\alpha_k)$ for $k \leq m$.
  \vspace{2mm}
 \end{enumerate} 
Now choose $\delta_m>0$ for each $m \in \N$ in such a way that $\prod_{m=1}^{\infty}(1-\delta_m)>0$.

By Lemma \ref{exists approximate measure}, for each $m \in \N$, there exists ${q(m)} \geq m$ and $\nu_m \in \B_{\sigma^{q(m)}}^{0}(\Sigma)$ satisfying,
\begin{enumerate}
 \vspace{2mm}
 \item[B(i)] $\displaystyle \frac{ h_{\nu_m \circ \pi^{-1}}( \sigma_v^{q(m)})}{\int S_{q(m)}(\psi) d \nu_m \circ \pi^{-1}}> \delta_v(\alpha) -\frac{1}{2m}$,
 \vspace{4mm}
 \item[B(ii)] $\displaystyle \frac{ h_{\nu_m}(\sigma^{q(m)}|\pi^{-1}\mathscr{A})}{\int S_{q(m)}(\chi) d \nu_m}>\delta_h(\alpha) -\frac{1}{2m}$,
 \vspace{4mm}
 \item[B(iii)] $\displaystyle \int A_{{q(m)}}(\varphi_k) d\nu_m \in B_{2m}(\alpha)$ for all $k \leq m$,
\vspace{4mm}
 \item[B(iv)] $\displaystyle \var_{n}\left( A_{n}(\varphi_k)\right)< \frac{1}{m}$, for all $n\geq q(m)$ and $k \leq m$,
\vspace{4mm}
 \item[B(v)] $\displaystyle \var_{n}\left( A_{n}(\chi)\right), \var_{n}\left( A_{n}(\psi)\right)< \frac{1}{m}$, for all $n\geq q(m)$.
 \vspace{2mm}
  \end{enumerate}
Since $\nu_m \in \B_{\sigma^{q(m)}}^{0}(\Sigma)$ is compactly supported there is a finite digit set $\D_m \subset \D$ such that $\nu_m$ is supported on $\D_m^{\N}$. We define,
\begin{align*}
A(m):= \sup \left(\left\lbrace - \log \inf_{x \in [0,1]}|f_{d}|: d \in \D_m \right\rbrace \cup \bigg\{   |\var_1(\varphi_k)(\omega)|: \omega_1, \cdots, \omega_{q(m)} \in \D_m \bigg\} \right).
\end{align*}
Note that $A(m)$ is finite since $\D_m$ is finite and $\var_{1}(\varphi_k))$ is finite for all $k\leq m$.

For each $m$ we define,
$\tilde{A}(m):=\prod_{l=1}^{m+1}A(l) +1$. 

By Lemma \ref{Egorov type stuff}, for each $m \in \N$ we may take $N(m) \in \N$ and $U(m) \subseteq \Sigma_v$ with $\nu_m\circ \pi^{-1}(U(m))>1-\delta_m$ such that for all $\tau \in U(m)$ and all $n\geq N(m)$ we have,
\begin{enumerate}
 \vspace{2mm}
 \item[C(i)] $\displaystyle \frac{-\log \nu_m \circ \pi^{-1}([\tau_1\cdots \tau_{nq}])}{S_{nq}(\psi)(\tau)} >\delta_v(\alpha) -\frac{1}{m}$,
 \vspace{4mm}
 \item[C(ii)] $\displaystyle \frac{T_{n}(\tau)}{n}< \frac{1}{m},$ provided $\D$ is tall.
 \vspace{2mm}
 \end{enumerate}
 Moreover, for each $\tau \in U(m)$ there exists $V_{\tau}(m) \subseteq \pi^{-1}\{\tau\} \cap \D_m^{\N}$ with $\nu_m^{\tau}(V_{\tau}(m))>1-\delta_m$ and for all $\omega \in V_{\tau}(m)$ and $n \geq N(m)$ we have,
 \begin{enumerate}
 \vspace{2mm}
 \item[C(iii)] $\displaystyle \frac{-\log \nu_m^{\tau}([\omega_1\cdots \omega_{nq}])}{S_{nq}(\chi)(\omega)}> \delta_h(\alpha) -\frac{1}{m}$,
 \vspace{4mm}
 \item[C(iv)] $\displaystyle \tilde{A}(m)\frac{W_{n}(\omega)}{n}< \frac{1 }{m},$ provided $\D$ is wide,
 \vspace{4mm}
 \item[C(v)] $\displaystyle A_{nq}(\varphi_k)(\omega) \in B_m(\alpha_k)$ for all $k \leq m$.
 \vspace{2mm}
  \end{enumerate}
We now define a rapidly increasing sequence $(\gamma_m)_{m\in\N\cup\{0\}}$ of natural numbers by $\gamma_0=2N(1)$, $\gamma_1=2N(2)$ and for $m>1$ we let
\begin{equation}
\gamma_m:= (m+1)!\cdot \gamma_{m-1}  \left(\prod_{l=1}^{m+1}N(l)\right)\left(\prod_{l=1}^{m+1}A(l)\right)\left(\prod_{l=1}^{m+1}q(l)\right)+\gamma_{m-1}.
\end{equation}

We now define a measure $\W$ on $\Sigma_v$ by first defining $\W$ on a semi-algebra of cylinders and then extending $\W$ to a Borel probability measure on $\Sigma_v$ via the Daniell-Kolmogorov consistency theorem (\cite{Walters} Theorem 0.5). Given a cylinder  $[\tau_1\cdots\tau_{\gamma_M}]$ of length $\gamma_M$ for some $M \in \N$ we define
\begin{equation}
\W([\tau_1\cdots\tau_{\gamma_M}]):=\prod_{m=1}^M\nu_{m}\circ \pi^{-1}([\tau_{\gamma_{m-1}+1}\cdots \tau_{\gamma_m}]).
\end{equation}
Define $U\subseteq \Sigma_v$ by
\begin{equation}
U:=\bigcap_{m=1}^{\infty}\left\lbrace \tau\in \Sigma_v:[\tau_{\gamma_{m-1}+1}\cdots \tau_{\gamma_m}]\cap U(m)\neq \emptyset\right\rbrace .
\end{equation}
For each $\tau \in U$ and $m \in \N$ we choose $\hat{\tau}^{m} \in [\tau_{\gamma_{m-1}+1}\cdots \tau_{\gamma_m}]\cap U(m)$ and define a measure $\Z^{\tau}$ on $\Sigma$ by
\begin{equation}
\Z^{\tau}([\omega_1\cdots \omega_{\gamma_M}]):=\prod_{m=1}^M\nu_{m}^{\hat{\tau}^{m}}([\omega_{\gamma_{m-1}+1}\cdots \omega_{\gamma_m}]).
\end{equation}
\begin{lemma} For all $\tau \in U$ we have $\Z^{\tau}\left( \pi^{-1}\{\tau\}\right)=1$.
\end{lemma}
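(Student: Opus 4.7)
The plan is to exploit the fact that each conditional measure $\nu_m^{\hat{\tau}^m}$ is, by construction, concentrated on the fiber $\pi^{-1}\{\hat{\tau}^m\}$, together with the choice of $\hat{\tau}^m$ inside the cylinder $[\tau_{\gamma_{m-1}+1}\cdots\tau_{\gamma_m}]\cap U(m)$. The latter gives $\hat{\tau}^m_j=\tau_j$ for all $\gamma_{m-1}+1\leq j\leq \gamma_m$, so $\nu_m^{\hat{\tau}^m}$-almost every $\omega$ satisfies $\pi(\omega)_j=\tau_j$ on that block of coordinates. Because $\Z^{\tau}$ is built as a product of these conditional measures over disjoint blocks of indices, the block-wise fibre constraints will patch together into a full fibre constraint.

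Concretely, for each $M \in \N$ let
\[
C_M:=\bigl\{\omega\in \Sigma:\pi(\omega)_j=\tau_j \text{ for all }1\leq j\leq \gamma_M\bigr\},
\]
a disjoint countable union of cylinders of length $\gamma_M$. Using the defining formula for $\Z^{\tau}$ on such cylinders and rearranging the sum as an iterated product,
\[
\Z^{\tau}(C_M)=\prod_{m=1}^{M}\sum_{\substack{(\omega_{\gamma_{m-1}+1},\ldots,\omega_{\gamma_m})\\ \pi(\omega_{\nu})=\tau_{\nu}}}\nu_{m}^{\hat{\tau}^{m}}([\omega_{\gamma_{m-1}+1}\cdots \omega_{\gamma_m}]).
\]
Each inner sum is the $\nu_m^{\hat{\tau}^m}$-measure of the set of $\omega$ whose $\pi$-image matches $\hat{\tau}^m$ (equivalently $\tau$) on positions $\gamma_{m-1}+1,\ldots,\gamma_m$, which contains $\pi^{-1}\{\hat{\tau}^m\}$ and therefore has measure $1$. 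Hence $\Z^{\tau}(C_M)=1$ for every $M$. Since $\{C_M\}_{M\in\N}$ is a decreasing family with $\bigcap_{M} C_M=\pi^{-1}\{\tau\}$, continuity of measure from above yields $\Z^{\tau}(\pi^{-1}\{\tau\})=1$.

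There is no real obstacle beyond checking that the Daniell--Kolmogorov extension is well defined (so that $\Z^{\tau}$ makes sense as a Borel probability measure, which is already assumed in the construction) and unwinding the cylinder notation $[\omega_{\gamma_{m-1}+1}\cdots\omega_{\gamma_m}]$ as the subset fixing the specified positions. The only mild subtlety is that the blocks of coordinates used in the definition of $\Z^{\tau}$ are disjoint, which is what allows the sum-of-products over the union defining $C_M$ to factor as a product of marginal masses; this is just Fubini applied to a finite product of measures, and no estimate on $\gamma_m$ is required for this step.
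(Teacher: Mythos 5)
Your proof is correct and follows essentially the same route as the paper: the paper likewise notes that $\nu_m^{\hat{\tau}^m}$ gives full mass to $\pi^{-1}[\tau_{\gamma_{m-1}+1}\cdots\tau_{\gamma_m}]$ (since it is carried by $\pi^{-1}\{\hat{\tau}^m\}$), concludes $\Z^{\tau}(\pi^{-1}[\tau_1\cdots\tau_{\gamma_M}])=1$ for every $M$ from the product structure, and lets $M\rightarrow\infty$. Your explicit factorisation of the sum over cylinders and the appeal to continuity from above merely spell out the steps the paper leaves implicit.
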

\begin{proof} For each $m\in \N$ we have,
\begin{eqnarray}
\nu_{m}^{\hat{\tau}^{m}}\left(\pi^{-1}[\tau_{\gamma_{m-1}+1}\cdots \tau_{\gamma_m}]\right)=\nu_{m}^{\hat{\tau}^{m}}\left(\pi^{-1}\{\hat{\tau}^{m}\}\right)=1.
\end{eqnarray}
Hence, for each $M \in \N$ we have
\begin{equation}
\Z^{\tau}(\pi^{-1}[\tau_1\cdots \tau_{\gamma_M}])=\prod_{m=1}^M\nu_{m}^{\hat{\tau}^{m}}\left(\pi^{-1}[\tau_{\gamma_{m-1}+1}\cdots \tau_{\gamma_m}]\right)=1.
\end{equation}
The lemma follows.
\end{proof}
For each $\tau \in U$ we define $V_{\tau}\subseteq \pi^{-1}\{\tau\}$ by
\begin{equation}
V_{\tau}:=\bigcap_{m=1}^{\infty}\left\lbrace \omega\in \Sigma:[\omega_{\gamma_{m-1}+1}\cdots \omega_{\gamma_m}]\cap V_{\tau}(m)\neq \emptyset\right\rbrace .
\end{equation}
We also define,
\begin{equation}
S:=\left\lbrace \omega \in \Sigma: \pi(\omega) \in U \text{ and } \omega \in V_{\pi(\omega)}\right\rbrace.
\end{equation}
We shall show that $S\subseteq E_{\varphi}(\alpha)$ and $\dim \Pi(S)\geq \delta(\alpha)$.

\begin{lemma}\label{S subset} $S\subseteq E_{\varphi}(\alpha)$.
\end{lemma}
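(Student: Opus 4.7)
The plan exploits the rigid block structure of $\omega \in S$: on the block of indices $[\gamma_{m-1}+1, \gamma_m]$ the digits of $\omega$ coincide with the first $L_m := \gamma_m - \gamma_{m-1}$ digits of some $\omega^m \in V_{\hat{\tau}^m}(m) \subseteq \D_m^{\N}$. By the construction of $(\gamma_m)$, $L_m$ is a multiple of $q(m)$ vastly exceeding $q(m)N(m)$, so property (v) of Lemma \ref{Egorov type stuff} applied to $\nu_m$ forces $A_{L_m}(\varphi_k)(\omega^m) \in B_m(\alpha_k)$ for every $k \leq m$. Because $\sigma^{\gamma_{m-1}}\omega$ agrees with $\omega^m$ on the first $L_m \geq q(m)$ coordinates, the tempered-distortion bound $\var_{L_m}(A_{L_m}(\varphi_k)) < 1/m$ from Lemma \ref{exists approximate measure}(iv) gives $|A_{L_m}(\varphi_k)(\sigma^{\gamma_{m-1}}\omega) - A_{L_m}(\varphi_k)(\omega^m)| < 1/m$. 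Fixing $k$ and letting $m \to \infty$, the block average $B_m^{(k)} := A_{L_m}(\varphi_k)(\sigma^{\gamma_{m-1}}\omega)$ converges to $\alpha_k$ in the obvious sense.

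For an arbitrary $n$, I determine $M$ with $n \in [\gamma_{M-1}, \gamma_M)$ and decompose
\begin{equation*}
n\, A_n(\varphi_k)(\omega) \;=\; S_{\gamma_k}(\varphi_k)(\omega) \;+\; \sum_{m=k+1}^{M-1} L_m\, B_m^{(k)} \;+\; P_n,
\end{equation*}
where $P_n := \sum_{l=\gamma_{M-1}}^{n-1} \varphi_k(\sigma^l\omega)$. The initial sum is an $n$-independent constant, so contributes $o(1)$ after division by $n$. The middle sum is a weighted combination of the convergent sequence $(B_m^{(k)})_{m > k}$: given $\epsilon > 0$, choose $M_0 \geq k$ so that $|B_m^{(k)} - \alpha_k| < \epsilon$ for $m > M_0$, and use the super-exponential growth of $(\gamma_m)$ to conclude that the $m \leq M_0$ terms contribute $o(n)$, leaving the middle sum within $O(\epsilon)$ of $\alpha_k\, \gamma_{M-1}/n$ after division by $n$.

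The principal obstacle is the partial block $P_n$. If $n - \gamma_{M-1} \geq q(M) N(M)$, I round to the largest multiple of $q(M)$ at most $n-\gamma_{M-1}$ and repeat the block-comparison argument to obtain $P_n = \alpha_k(n - \gamma_{M-1}) + o(n)$; the leftover at most $q(M)$ terms contribute $o(n)$ since $\omega^M$ lies in the compact space $\D_M^{\N}$ on which $\varphi_k$ is bounded. If instead $n - \gamma_{M-1} < q(M) N(M)$, the explosive growth of $(\gamma_m)$, precisely the factor $\prod_{l \leq m+1} A(l)\, q(l)\, N(l)$ in the recursion for $\gamma_m$, ensures that $q(M)N(M)(A(M) + C_M) \ll \gamma_{M-1} \leq n$, where $C_M$ is the bound for $|\varphi_k|$ on $\D_M^{\N}$. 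Combined with the one-sided bound on $\varphi_k$ and the local-variation bound $A(M)$, which applies since each $\sigma^l\omega$ with $l \in [\gamma_{M-1}, n)$ has its first $q(M)$ coordinates in $\D_M$, this forces $|P_n|/n \to 0$. Assembling the three estimates and letting $n \to \infty$ then $\epsilon \to 0$ gives $A_n(\varphi_k)(\omega) \to \alpha_k$ when $\alpha_k \in \R$; the case $\alpha_k = \pm\infty$ follows by the corresponding one-sided refinement, using only the lower (or upper) halves of the estimates above.
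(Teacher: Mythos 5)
Your proposal is correct and follows essentially the same route as the paper: decompose $S_n(\varphi_k)(\omega)$ along the blocks $[\gamma_{m-1},\gamma_m)$, transfer the Egorov-type control C(v) from the reference points $\hat{\omega}^m\in V_{\tau}(m)$ to $\sigma^{\gamma_{m-1}}\omega$ via the tempered-distortion bound B(iv), handle the current partial block by the same dichotomy (length $\gtrless q(M)N(M)$), and absorb boundary terms using the super-fast growth of $(\gamma_m)$. The only difference is bookkeeping: you control every completed block through the convergence of the block averages $B_m^{(k)}$ and a weighted-average argument, whereas the paper crude-bounds everything before the last completed block by $\gamma_{m(n)-1}A(m(n)-1)=o(n)$; both rest on exactly the same ingredients.
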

\begin{proof} Note that it suffices to take $\omega \in \Sigma$ with $\pi(\omega) \in U$ and $\omega \in V_{\pi(\omega)}$ and show that for each $k \in \N$ we have $\lim_{n \rightarrow \infty}A_n(\varphi_k)(\omega)= \alpha_k$. 

Given $n \in \N$ we choose $m(n) \in \N$ so that $\gamma_{m(n)} \leq n < \gamma_{m(n)+1}$ and $l(n) \in \N$ so that  $\gamma_{m(n)}+l(n)q(m(n)+1)\leq n < \gamma_{m(n)}+(l(n)+1)q(m(n)+1).$ Since $\lim_{n \rightarrow \infty} m(n)= \infty$ we may choose $n(k)\in \N$ so that $m(n) \geq k$ for all $n \geq n(k)$. 

First lets suppose that $\alpha_k \in \R$. Given $n\geq n(k)$, either $l(n) \leq N(m(n)+1)$, in which case
\[\big| S_n(\varphi_k)(\omega)-n\alpha_k\big|\]
\begin{eqnarray*}
&\leq & \big| S_{\gamma_{m(n)}-\gamma_{m(n)-1}}(\varphi_k)(\sigma^{\gamma_{m(n)-1}}\omega)-(\gamma_{m(n)}-\gamma_{m(n)-1})\alpha_k\big|\\  &&+   \gamma_{m(n)-1}A(m(n)-1)+(N(m(n)+1)+1)q(m(n)+1)A(m(n)+1)\\
 &\leq & \big| S_{\gamma_{m(n)}-\gamma_{m(n)-1}}(\varphi_k)(\sigma^{\gamma_{m(n)-1}}\omega)-(\gamma_{m(n)}-\gamma_{m(n)-1})\alpha_k\big| + \frac{2\gamma_{m(n)}}{m(n)}\\
&\leq & 
 \big| S_{\gamma_{m(n)}-\gamma_{m(n)-1}}(\varphi_k)(\hat{\omega}^{m(n)})-(\gamma_{m(n)}-\gamma_{m(n)-1})\alpha_k\big|\\
&&+\left(\gamma_{m(n)}-\gamma_{m(n)-1}\right)\var_{\gamma_{m(n)}-\gamma_{m(n)-1}}\left(A_{\gamma_{m(n)}-\gamma_{m(n)-1}}(\varphi_k)\right)+\frac{2n}{m(n)}\\
&\leq& \frac{2(\gamma_{m(n)}-\gamma_{m(n)-1})}{m(n)}+\frac{2n}{m(n)}\leq \frac{4n}{m(n)}.
\end{eqnarray*}
On the other hand, if $l(n) > N(m(n)+1)$ then we have,
\[ \big| S_n(\varphi_k)(\omega)-n\alpha_k\big| \]
\begin{eqnarray*}
 &\leq & \big| S_{\gamma_{m(n)}-\gamma_{m(n)-1}}(\varphi_k)(\sigma^{\gamma_{m(n)-1}}\omega)-(\gamma_{m(n)}-\gamma_{m(n)-1})\alpha_k\big|\\  
 &&+  \big| S_{l(n)q(m(n)+1)-\gamma_{m(n)}}(\varphi_k)(\sigma^{\gamma_{m(n)}}\omega)-(l(n)q(m(n)+1)-\gamma_{m(n)})\alpha_k\big|\\ 
 && + \gamma_{m(n)-1}A(m(n)-1)+q(m(n)+1)A(m(n)+1)\\
 &\leq &  \big| S_{\gamma_{m(n)}-\gamma_{m(n)-1}}(\varphi_k)(\sigma^{\gamma_{m(n)-1}}\omega)-(\gamma_{m(n)}-\gamma_{m(n)-1})\alpha_k\big|\\  
 &&+  \big| S_{l(n)q(m(n)+1)-\gamma_{m(n)}}(\varphi_k)(\sigma^{\gamma_{m(n)}}\omega)-(l(n)q(m(n)+1)-\gamma_{m(n)})\alpha_k \big| +  \frac{2\gamma_{m(n)}}{m(n)} \\
&\leq &  \big| S_{\gamma_{m(n)}-\gamma_{m(n)-1}}(\varphi_k)(\hat{\omega}^{m(n)})-(\gamma_{m(n)}-\gamma_{m(n)-1})\alpha_k\big|\\ &&+  \big| S_{l(n)q(m(n)+1)-\gamma_{m(n)}}(\varphi_k)(\hat{\omega}^{m(n)+1})-(l(n)q(m(n)+1)-\gamma_{m(n)})\alpha_k\big|\\
&& +  (\gamma_{m(n)}-\gamma_{m(n)-1})\var_{\gamma_{m(n)}-\gamma_{m(n)-1}}A_{\gamma_{m(n)}-\gamma_{m(n)-1}}(\varphi_k) \\ 
&&+ (l(n)q(m(n)+1)-\gamma_{m(n)})\var_{l(n)q(m(n)+1)-\gamma_{m(n)}}A_{l(n)q(m(n)+1)-\gamma_{m(n)}}(\varphi_k)+ \frac{2\gamma_{m(n)}}{m(n)}\\
&\leq &  \frac{2(\gamma_{m(n)}-\gamma_{m(n)-1})}{m(n)}+\frac{2(l(n)q(m(n)+1)-\gamma_{m(n)})}{m(n)}+ \frac{2\gamma_{m(n)}}{m(n)}\leq \frac{6 n}{m(n)}.
\end{eqnarray*}
Thus, for all $n \geq n(k)$ we have,
\begin{equation*}
\big| A_n(\varphi_k)(\omega)-\alpha_k\big|\leq  \frac{6}{m(n)}.
\end{equation*}
Since $\lim_{n \rightarrow \infty} m(n)=\infty$ the lemma holds when $\alpha_k$ is finite.

Now suppose that $\alpha_k = \infty$. Given $n\geq n(k)$, either $l(n) \leq N(m(n)+1)$, in which case,
\begin{eqnarray*}
 S_n(\varphi_k)(\omega) &\geq & S_{\gamma_{m(n)}-\gamma_{m(n)-1}}(\varphi_k)(\sigma^{\gamma_{m(n)-1}}\omega)\\
 & \geq &  S_{\gamma_{m(n)}-\gamma_{m(n)-1}}(\varphi_k)(\hat{\omega^{m(n)}}) \\ && -\left(\gamma_{m(n)}-\gamma_{m(n)-1}\right)\var_{\gamma_{m(n)}-\gamma_{m(n)-1}}\left(A_{\gamma_{m(n)}-\gamma_{m(n)-1}}(\varphi_k)\right)\\
 & \geq & (\gamma_{m(n)}-\gamma_{m(n)-1})m(n)- \frac{\gamma_{m(n)}-\gamma_{m(n)-1}}{m(n)}\\
 & \geq & n m(n) - (\gamma_{m(n)-1}+l(n)q(m(n)+1))m(n)- \frac{n}{m(n)}\\
 &\geq & n m(n) - \frac{2 \gamma_{m(n)}}{m(n)}- \frac{n}{m(n)}\\
  &\geq & n m(n) - \frac{3n}{m(n)}.
  \end{eqnarray*}
On the other hand, if $l(n) > N(m(n)+1)$ then we have,
\begin{eqnarray*}
S_n(\varphi_k)(\omega) &\geq & S_{\gamma_{m(n)}-\gamma_{m(n)-1}}(\varphi_k)(\sigma^{\gamma_{m(n)-1}}\omega) + S_{l(n)q(m(n)+1)-\gamma_{m(n)}}(\varphi_k)(\sigma^{\gamma_{m(n)}}\omega)\\
&\geq & S_{\gamma_{m(n)}-\gamma_{m(n)-1}}(\varphi_k)(\hat{\omega}^{m(n)})+S_{l(n)q(m(n)+1)-\gamma_{m(n)}}(\varphi_k)(\hat{\omega}^{m(n)+1}) 
\\ & & - (\gamma_{m(n)}-\gamma_{m(n)-1})\var_{\gamma_{m(n)}-\gamma_{m(n)-1}}A_{\gamma_{m(n)}-\gamma_{m(n)-1}}(\varphi_k) \\ &&- (l(n)q(m(n)+1)-\gamma_{m(n)})\var_{l(n)q(m(n)+1)-\gamma_{m(n)}}A_{l(n)q(m(n)+1)-\gamma_{m(n)}}(\varphi_k)\\
& \geq & \left(\gamma_{m(n)}-\gamma_{m(n)-1}\right)m(n) + \left(l(n)q(m(n)+1)-\gamma_{m(n)}\right)m(n)- \frac{2n}{m(n)}\\
& \geq & n m(n) - \frac{4n}{m(n)}.
\end{eqnarray*}  
Thus, for all $n \geq n(k)$ we have,
\begin{equation*}
A_n(\varphi_k)(\omega) \geq m(n) - \frac{4}{m(n)}.
\end{equation*}  
Letting $n\rightarrow \infty$ proves the lemma. 
\end{proof}

\begin{lemma}\label{+measure}\label{U>0}
$\W(U)>0$ and for each $\tau \in U$, $\Z^{\tau}\left(V_{\tau}\right)>0$.
\end{lemma}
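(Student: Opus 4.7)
The plan is to exploit the product structure of $\W$ and of $\Z^{\tau}$, each of which is a concatenation of independent block distributions coming from $\nu_m \circ \pi^{-1}$ and $\nu_m^{\hat{\tau}^{m}}$ respectively, and then to combine the lower bounds on $U(m)$ and $V_{\hat{\tau}^{m}}(m)$ from Lemma \ref{Egorov type stuff} with the summability condition $\prod_{m}(1-\delta_m) > 0$.

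For $\W(U) > 0$, I would set $B_m := \{\tau \in \Sigma_v : [\tau_{\gamma_{m-1}+1} \cdots \tau_{\gamma_m}] \cap U(m) \neq \emptyset\}$, so that $U = \bigcap_{m \in \N} B_m$. The key observation is that $B_m$ depends only on the coordinates of $\tau$ in positions $\gamma_{m-1}+1, \ldots, \gamma_m$, and the defining formula $\W([\tau_1 \cdots \tau_{\gamma_M}]) = \prod_{m=1}^{M} \nu_m \circ \pi^{-1}([\tau_{\gamma_{m-1}+1} \cdots \tau_{\gamma_m}])$ shows that under $\W$ the blocks are mutually independent, with the $m$-th block distributed as the first $\gamma_m - \gamma_{m-1}$ coordinates of $\nu_m \circ \pi^{-1}$. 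Since the set of $\kappa \in \Sigma_v$ whose initial segment of length $\gamma_m - \gamma_{m-1}$ appears as a prefix of some element of $U(m)$ manifestly contains $U(m)$ itself, I would conclude
\[
\W(B_m) \geq \nu_m \circ \pi^{-1}(U(m)) > 1 - \delta_m,
\]
and hence $\W(U) = \prod_{m=1}^{\infty} \W(B_m) \geq \prod_{m=1}^{\infty}(1 - \delta_m) > 0$ by the choice of $(\delta_m)_{m \in \N}$.

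For $\Z^{\tau}(V_{\tau}) > 0$ with $\tau \in U$, I would argue analogously. The assumption $\tau \in U$ ensures that $\hat{\tau}^{m} \in U(m)$ for every $m$, so the conditional measure $\nu_m^{\hat{\tau}^{m}}$ is well defined and the set $V_{\hat{\tau}^{m}}(m)$ supplied by Lemma \ref{Egorov type stuff} has $\nu_m^{\hat{\tau}^{m}}$-measure exceeding $1 - \delta_m$. The product formula $\Z^{\tau}([\omega_1 \cdots \omega_{\gamma_M}]) = \prod_{m=1}^{M} \nu_m^{\hat{\tau}^{m}}([\omega_{\gamma_{m-1}+1} \cdots \omega_{\gamma_m}])$ yields, in precisely the same manner, independence of the blocks of $\omega$ under $\Z^{\tau}$. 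Setting $C_m := \{\omega \in \Sigma : [\omega_{\gamma_{m-1}+1} \cdots \omega_{\gamma_m}] \cap V_{\hat{\tau}^{m}}(m) \neq \emptyset\}$ so that $V_{\tau} = \bigcap_{m} C_m$, the identical computation delivers $\Z^{\tau}(V_{\tau}) \geq \prod_{m=1}^{\infty}(1 - \delta_m) > 0$.

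The only technicality is to justify rigorously that $\W$ and $\Z^{\tau}$ factor as independent products across the blocks $(\gamma_{m-1}, \gamma_m]$; this is immediate from the Daniell--Kolmogorov construction, since the defining formulas on cylinders of length $\gamma_M$ are already products over $m$. I do not anticipate a serious obstacle here; the proof is essentially a Borel--Cantelli-type calculation adapted to an inhomogeneous product measure.
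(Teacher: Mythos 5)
Your proposal is correct and follows essentially the same route as the paper: the paper's proof is just the two inequality chains $\W(U)\geq \prod_{m}\nu_m\circ\pi^{-1}(U(m))>\prod_{m}(1-\delta_m)>0$ and $\Z^{\tau}(V_{\tau})\geq \prod_{m}\nu_m^{\hat{\tau}^m}(V_{\tau}(m))>\prod_{m}(1-\delta_m)>0$, whose justification is exactly the block-independence argument you spell out. Your version merely makes explicit the step the paper leaves implicit, namely that each block event contains the corresponding set $U(m)$ (resp.\ $V_{\hat{\tau}^m}(m)$) under the identification of the $m$-th block with the initial coordinates of $\nu_m\circ\pi^{-1}$ (resp.\ $\nu_m^{\hat{\tau}^m}$).
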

\begin{proof}
\begin{equation*}
\W(U)\geq \prod_{m=1}^{\infty}\nu_m \circ \pi^{-1}(U(m))>\prod_{m=1}^{\infty}(1-\delta_m)>0.
\end{equation*}
Similarly for each $\tau \in U$ we have,
\begin{equation*}
\Z^{\tau}(V_{\tau}) \geq \prod_{m=1}^{\infty}\nu^{\hat{\tau}^m}_m (V_{\tau}(m))>\prod_{m=1}^{\infty}(1-\delta_m)>0.
\end{equation*}
\end{proof}

\begin{lemma}\label{Symbdimest} For all $\tau \in U$ and all $\omega \in V_{\tau}$ we have,
\begin{enumerate}
\vspace{4mm}
\item [(i)] $\displaystyle \liminf_{n\rightarrow \infty}\frac{-\log \W([\tau_1\cdots \tau_{n}])}{S_{n}(\psi)(\tau)} \geq \delta_v(\alpha)$,
\vspace{4mm}
\item[(ii)] $\displaystyle  \liminf_{n\rightarrow \infty} \frac{-\log \Z^{\tau}([\omega_1\cdots \omega_{n}])}{S_{n}(\chi)(\omega)}\geq  \delta_h(\alpha)$.
\end{enumerate}
\end{lemma}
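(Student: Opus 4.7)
The two statements are parallel, so the plan is to handle (i) in detail and indicate the adjustments needed for (ii). Given $n$, write $n = \gamma_M + r$ with $M = m(n)$, so that $M \to \infty$ as $n \to \infty$. By the Daniell–Kolmogorov construction of $\W$ and the fact that $\nu_{M+1}$ is a $q(M+1)$-th level Bernoulli measure, the cylinder probability factors as
\[
-\log\W([\tau_1\cdots\tau_n]) = \sum_{m=1}^{M} -\log\nu_m\circ\pi^{-1}([\tau_{\gamma_{m-1}+1}\cdots\tau_{\gamma_m}]) \;+\; \bigl(-\log\nu_{M+1}\circ\pi^{-1}([\tau_{\gamma_M+1}\cdots\tau_n])\bigr),
\]
while $S_n(\psi)(\tau)$ decomposes analogously as the $\psi$-sums along blocks. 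Because $\A$ is finite, $\psi$ is bounded between positive constants $\psi_{\min}$ and $\psi_{\max}$, so each full-block contribution to the denominator is $\Theta(\gamma_m - \gamma_{m-1})$.

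The key is to estimate each full block. Since $\hat\tau^m \in [\tau_{\gamma_{m-1}+1}\cdots\tau_{\gamma_m}]\cap U(m)$, the sequence $\hat\tau^m$ agrees with $\sigma^{\gamma_{m-1}}\tau$ on the first $\gamma_m-\gamma_{m-1}$ coordinates, and by construction $\gamma_m-\gamma_{m-1}$ is a multiple of $q(m)$ exceeding $N(m) q(m)$. Applying C(i) to $\hat\tau^m$ then using the tempered distortion bound B(v) to transfer the Birkhoff sum from $\hat\tau^m$ to $\sigma^{\gamma_{m-1}}\tau$ yields
\[
-\log\nu_m\circ\pi^{-1}([\tau_{\gamma_{m-1}+1}\cdots\tau_{\gamma_m}]) \;\geq\; \bigl(\delta_v(\alpha) - \tfrac{1}{m}\bigr)\Bigl(S_{\gamma_m-\gamma_{m-1}}(\psi)(\sigma^{\gamma_{m-1}}\tau) - \tfrac{\gamma_m-\gamma_{m-1}}{m}\Bigr).
\]

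Given $\epsilon > 0$, choose $m_0$ large enough that $\delta_v(\alpha) - 1/m > 0$ and $1/m < \epsilon$ for all $m \geq m_0$. For blocks $m < m_0$, discard the numerator contribution using the trivial bound $-\log(\cdot)\geq 0$; since these blocks contribute at most $\gamma_{m_0-1}\psi_{\max}$ to the denominator, which is a constant while the denominator grows like $n\psi_{\min}$, they are negligible in the limit. The partial last block is split into two subcases: if $r \leq N(M+1)q(M+1)$, discard its numerator contribution (the construction of $\gamma_M$ guarantees $N(M+1)q(M+1)/\gamma_M \to 0$, so this block is negligible in the denominator too); otherwise let $l = \lfloor r/q(M+1)\rfloor \geq N(M+1)$ and apply C(i) to the $l q(M+1)$-prefix, again transferring via tempered distortion. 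The upshot is that
\[
\frac{-\log\W([\tau_1\cdots\tau_n])}{S_n(\psi)(\tau)} \;\geq\; \delta_v(\alpha) - O(1/m_0) + o(1) \quad (n\to\infty),
\]
and letting $\epsilon\to 0$ (equivalently $m_0 \to \infty$) proves (i).

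For (ii), the same argument applies with $\Z^\tau$, $\nu_m^{\hat\tau^m}$, $\chi$, $\delta_h(\alpha)$, and C(iii) replacing $\W$, $\nu_m\circ\pi^{-1}$, $\psi$, $\delta_v(\alpha)$, and C(i); for $\omega \in V_\tau$ one picks $\hat\omega^m \in V_\tau(m)$ that agrees with $\omega$ on block $m$ and applies C(iii) to it. The only new point is that $\chi$ is no longer globally bounded above, but for $\omega \in V_\tau$ the digits in block $m$ lie in the finite set $\D_m$, so $\chi \leq A(m)$ on block $m$; the factor $\prod_{l=1}^{m+1} A(l)$ built into $\gamma_m$ is precisely what ensures that the $M$-th block dominates the denominator $\sum T_m$ despite the possibly growing $A(m)$. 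The main technical obstacle throughout is this last-block dominance argument: verifying that the rapid growth of $(\gamma_m)$ absorbs the error terms $(\gamma_m-\gamma_{m-1})/m$ from tempered distortion and the constant contributions from blocks $m < m_0$, so that the weighted average of the $(\delta_v - 1/m)$ (resp.\ $(\delta_h - 1/m)$) bounds collapses onto the single value at $m = M$ in the limit.
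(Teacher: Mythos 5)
Your plan is correct and rests on exactly the same ingredients as the paper's proof --- the per-block estimates C(i)/C(iii) applied to representatives $\hat\tau^m$, $\hat\omega^m$, the transfer back to $\sigma^{\gamma_{m-1}}\tau$, $\sigma^{\gamma_{m-1}}\omega$ via the tempered-distortion bounds B(iv)/B(v), the bound $A(m)$ on $\chi$ over the digits $\D_m$ occurring in block $m$, the splitting of the final partial block according to whether $l(n)\le N(m(n)+1)$, and the rapid growth of $(\gamma_m)$ to make $\gamma_{m-1}A(m-1)$ and $(N(m+1)+1)q(m+1)A(m+1)$ of order $n/m(n)$ --- but the bookkeeping is organised differently. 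The paper discards \emph{every} block except the last full one (together with the partial block when $l(n)>N(m(n)+1)$), and shows that the retained Birkhoff sum already exceeds $S_n(\chi)(\omega)-4n/m(n)$, so the single factor $\delta_h(\alpha)-1/m(n)$ tends to $\delta_h(\alpha)$ with $n$ and no auxiliary parameter is needed. You instead keep all blocks $m\ge m_0$, bound each by the uniform constant $\delta_v(\alpha)-1/m_0$ (resp. $\delta_h(\alpha)-1/m_0$), absorb the blocks $m<m_0$, the tempered-distortion errors $\sum_m(\gamma_m-\gamma_{m-1})/m\le n/m_0$, and the short partial block into the denominator, and send $m_0\to\infty$ at the end; since the denominator grows at least linearly (by the uniform contraction condition, as the paper also uses via $\liminf n^{-1}S_n(\chi)(\omega)>0$), this works. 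One remark: your closing sentence about the weighted average of the $(\delta_v-1/m)$ bounds needing to ``collapse onto the single value at $m=M$'' describes the paper's last-block-dominance route, not yours --- with the $m_0$ device no such collapse is required, only that the error terms be $o(n)$ plus $O(n/m_0)$, which you verify. Two small points both you and the paper treat loosely: one should note $\delta_v(\alpha)-1/m\ge 0$ (guaranteed by your choice of $m_0$, and trivial when $\delta_v(\alpha)=0$ since the ratio is nonnegative) before multiplying the block Birkhoff sums through the inequality, and that the block Birkhoff sums of $\chi$ are positive for large block lengths, which follows from the uniform contraction and tempered distortion conditions.
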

\begin{proof}
We prove (ii). The proof of (i) is similar.
Take $\tau \in U$ and $\omega \in V_{\tau}$. Given $n \in \N$ we choose $m(n) \in \N$ so that $\gamma_{m(n)} \leq n < \gamma_{m(n)+1}$ and $l(n) \in \N$ so that  $\gamma_{m(n)}+l(n)q(m(n)+1)\leq n < \gamma_{m(n)}+(l(n)+1)q(m(n)+1).$ If $l(n) \leq N(m(n)+1)$ then  by C(iii) we have,
\begin{eqnarray*}
-\log \Z^{\tau}([\omega_1\cdots \omega_{n}])& \geq & -\log \nu^{\hat{\tau}^m}_m \left([\omega_{\gamma_{m(n)-1}+1} \cdots \omega_{\gamma_{m(n)}}]\right)\\
& \geq & S_{\gamma_{m(n)}-\gamma_{m(n)-1}}(\chi)(\hat{\omega}^{m(n)})\left(\delta_{h}(\alpha)- \frac{1}{m(n)}\right)
\end{eqnarray*}
where 
\begin{eqnarray*}
\hat{\omega}^{m(n)} \in [\omega_{\gamma_{m(n)-1}+1} \cdots \omega_{\gamma_{m(n)}}]\cap V_{\pi(\omega)}(m).
\end{eqnarray*} 

Moreover, using B(v) combined with $\gamma_{m(n)}\leq n$ we have,
\[S_{\gamma_{m(n)}-\gamma_{m(n)-1}}(\chi)(\hat{\omega}^{m(n)})\]
\begin{eqnarray*}
& \geq & S_{\gamma_{m(n)}-\gamma_{m(n)-1}}(\chi)(\sigma^{\gamma_{m(n)-1}}\omega) - (\gamma_{m(n)}-\gamma_{m(n)-1}) \var_{\gamma_{m(n)}-\gamma_{m(n)-1}}  A_{\gamma_{m(n)}-\gamma_{m(n)-1}}(\chi) \\
&\geq& S_{\gamma_{m(n)}-\gamma_{m(n)-1}}(\chi)(\sigma^{\gamma_{m(n)-1}}\omega) - \frac{\gamma_{m(n)}-\gamma_{m(n)-1}}{m(n)}\\
& \geq & S_n(\chi)(\omega) - \gamma_{m(n)-1}A(m(n)-1)-(N(m(n)+1)+1)q(m(n)+1)A(m(n)+1)   - \frac{n}{m(n)}\\
& \geq & S_n(\chi)(\omega) - \frac{2\gamma_{m(n)}}{m(n)}   - \frac{n}{m(n)}\\
& \geq & S_n(\chi)(\omega) - \frac{3n}{m(n)}.
\end{eqnarray*}

On the other hand, if $l(n)>N(m(n)+1)$ then by C(iii) we have
\[-\log \Z^{\tau}([\omega_1\cdots \omega_{n}])\]
\begin{eqnarray*}
& \geq & -\log \nu^{\hat{\tau}^m}_m \left([\omega_{\gamma_{m(n)-1}+1} \cdots \omega_{\gamma_{m(n)}}]\right)-\log \nu^{\hat{\tau}^m}_m \left([\omega_{\gamma_{m(n)}+1} \cdots \omega_{\gamma_{m(n)}+l(n)q(m(n)+1)}]\right) ,\\
& \geq & \left(S_{\gamma_{m(n)}-\gamma_{m(n)-1}}(\chi)(\hat{\omega}^{m(n)})+ S_{l(n)q(m(n)+1)}(\chi)(\hat{\omega}^{m(n)+1}) \right)\left(\delta_{h}(\alpha)- \frac{1}{m(n)}\right),
\end{eqnarray*}
\begin{eqnarray*}
\hat{\omega}^{m(n)} &\in& [\omega_{\gamma_{m(n)-1}+1} \cdots \omega_{\gamma_{m(n)}}]\cap V_{\pi(\omega)}(m(n)),\\
\hat{\omega}^{m(n)+1} &\in & [\omega_{\gamma_{m(n)}+1} \cdots \omega_{\gamma_{m(n+1)}}]\cap V_{\pi(\omega)}(m(n)+1).
\end{eqnarray*} 

As before, using B(v) combined with $\gamma_{m(n)}+l(n)q(m(n)+1)\leq n$ we have,
\begin{eqnarray*}
S_{\gamma_{m(n)}-\gamma_{m(n)-1}}(\chi)(\hat{\omega}^{m(n)}) & \geq & S_{\gamma_{m(n)}-\gamma_{m(n)-1}}(\chi)(\sigma^{\gamma_{m(n)-1}}\omega) - \frac{n}{m(n)}\\
S_{l(n)q(m(n)+1)}(\chi)(\hat{\omega}^{m(n)+1}) & \geq & S_{l(n)q(m(n)+1)}(\chi)(\sigma^{\gamma_{m(n)}}\omega) - \frac{n}{m(n)}.
\end{eqnarray*}
It follows that,
\[S_{\gamma_{m(n)}-\gamma_{m(n)-1}}(\chi)(\hat{\omega}^{m(n)})+ S_{l(n)q(m(n)+1)}(\chi)(\hat{\omega}^{m(n)+1})\]
\begin{eqnarray*}
 & \geq & S_{\gamma_{m(n)}-\gamma_{m(n)-1}}(\chi)(\sigma^{\gamma_{m(n)-1}}\omega)+S_{l(n)q(m(n)+1)}(\chi)(\sigma^{\gamma_{m(n)}}\omega)- \frac{2n}{m(n)}\\
& \geq & S_n(\chi)(\omega)-\gamma_{m(n)-1}A(m(n)-1)-q(m(n)+1)A(m(n)+1)-\frac{2n}{m(n)}\\
&\geq & S_n(\chi)(\omega)-\frac{4n}{m(n)}.
\end{eqnarray*}
Thus, for all $n \in \N$ we have,
\begin{eqnarray*}
-\log \Z^{\tau}([\omega_1\cdots \omega_{n}])\geq \left(S_n(\chi)(\omega)-\frac{4n}{m(n)}\right)\left(\delta_{h}(\alpha)- \frac{1}{m(n)}\right).
\end{eqnarray*}
Since $\liminf_{n\rightarrow \infty}n^{-1}S_n(\chi)(\omega) \geq \xi >0$ and $\lim_{n \rightarrow \infty} m(n)=\infty$, the lemma holds.
\end{proof}

\begin{lemma}\label{No borders lemma} For all $\tau \in U$ and all $\omega \in V_{\tau}$ we have,
\begin{enumerate}
\vspace{2mm}
\item [(i)] $\lim_{n\rightarrow \infty} n^{-1}R_n(\omega)=0$, provided $\D$ is wide,
\vspace{4mm}
\item [(ii)] $\lim_{n \rightarrow \infty}n^{-1}R_n(\omega) W_n(\omega)=0$, provided $\D$ is wide,
\vspace{4mm}
\item [(iii)] $\lim_{n\rightarrow \infty} n^{-1}T_n(\tau)=0$, provided $\D$ is tall.
\vspace{2mm}
\end{enumerate}
\end{lemma}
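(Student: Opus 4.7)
The proof of all three limits follows a common scheme. Given $n$, I locate the block index $m(n) \in \N$ with $\gamma_{m(n)} \leq n < \gamma_{m(n)+1}$, and set $k := n - \gamma_{m(n)}$; note that $m(n) \to \infty$ as $n \to \infty$. The key structural inputs are that $\tau \in U$ forces $\tau$ to agree on block $m$ (coordinates $\gamma_{m-1}+1, \ldots, \gamma_m$) with the representative $\hat\tau^m \in U(m)$, and $\omega \in V_\tau$ forces $\omega$ to agree on block $m$ with some $\hat\omega^m \in V_{\hat\tau^m}(m) \subseteq \D_m^{\N}$. Property C(ii) then gives direct control of $T_l(\hat\tau^m)$, property C(iv) of $W_l(\hat\omega^m)$ for $l \geq N(m)$, and membership in $\D_m^{\N}$ provides the uniform cap $A(m)$ on $-\log \inf_x |f'_d(x)|$ for every digit $d$ used in block $m$.

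For (iii), I shall analyze $T_n(\tau)$ by case-splitting on whether the first occurrence of the pattern $(i^1_0, i^2_0)$ in $\tau$ at positions $> n$ lies inside block $m(n)+1$ or spills into block $m(n)+2$. When $k \geq N(m(n)+1)$ and the pattern appears inside block $m(n)+1$, C(ii) gives $T_n(\tau) = T_k(\hat\tau^{m(n)+1}) \leq k/(m(n)+1) \leq n/(m(n)+1)$. If instead $k < N(m(n)+1)$ or the pattern overshoots the block boundary, I fall back on the first pattern near the start of $\hat\tau^{m(n)+2}$, which is bounded by $2N(m(n)+2)$. In every scenario, $T_n(\tau) \leq n/(m(n)+1) + 2N(m(n)+1) + 2N(m(n)+2)$. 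Dividing by $n \geq \gamma_{m(n)}$ and invoking the super-factorial growth of $\gamma_m$ yields $T_n(\tau)/n \to 0$.

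For (ii), the analogous case analysis applied via C(iv) gives $W_n(\omega) \leq n/\bigl((m(n)+1)\tilde A(m(n)+1)\bigr)$ in the principal case, plus a correction $O(N(m(n)+2))$ in the boundary subcase. Since the index range $[1, n + W_n(\omega) + 1]$ then lies inside $[1, \gamma_{m(n)+2}]$ for $n$ large, every $\omega_\nu$ in this range belongs to some $\D_l$ with $l \leq m(n)+2$, whence $R_n(\omega) \leq \tilde A(m(n)+1)$. Multiplying gives $R_n(\omega)W_n(\omega)/n \leq 1/(m(n)+1) + \tilde A(m(n)+1)\cdot O(N(m(n)+2))/n$, and the error term vanishes because $\gamma_{m(n)}$ absorbs the relevant products of $A$ and $N$ factors. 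Statement (i) then follows from the $R_n$ estimate alone: $R_n(\omega)/n \leq \tilde A(m(n)+1)/\gamma_{m(n)}$, and the explicit form $\gamma_{m(n)} \geq (m(n)+1)!\,\gamma_{m(n)-1}\prod_{l \leq m(n)+1} A(l)$ makes this ratio tend to $0$.

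The main technical obstacle is the bookkeeping at block boundaries, where $n$ may sit very close to $\gamma_{m(n)+1}$ and the sought pattern falls in the subsequent block, pulling in the constants $N(m(n)+2)$ and $A(m(n)+2)$ that are not explicitly present in $\gamma_{m(n)}$. Verifying that the rapid factorial growth built inductively into $\gamma_{m(n)-1}$ still dominates these shifted indices is the delicate step; once this is handled, C(ii), C(iv), and the block structure close out the argument directly.
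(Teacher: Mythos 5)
There is a genuine gap in your handling of the block boundaries, and it affects all three limits. Your fallback sends you to the start of block $m(n)+2$ whenever $k=n-\gamma_{m(n)}<N(m(n)+1)$ \emph{or} the guaranteed occurrence overshoots $\gamma_{m(n)+1}$. In the first of these subcases $n$ is barely larger than $\gamma_{m(n)}$, so any occurrence located in block $m(n)+2$ sits beyond position $\gamma_{m(n)+1}$, forcing $T_n(\tau)\geq \gamma_{m(n)+1}-n\approx\gamma_{m(n)+1}-\gamma_{m(n)}\gg n$; hence your claimed unified bound $T_n(\tau)\leq n/(m(n)+1)+2N(m(n)+1)+2N(m(n)+2)$ does not follow from your own case analysis, and the same objection breaks the corresponding bound for $W_n(\omega)$. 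The correct move in that subcase is to stay inside block $m(n)+1$ and apply C(ii) (resp.\ C(iv)) at the index $N(m(n)+1)$, which produces a wait of order $N(m(n)+1)$ — a quantity that \emph{is} dominated by $\gamma_{m(n)}\leq n$, because $\gamma_{m(n)}$ contains the factor $(m(n)+1)!\,N(m(n)+1)$.

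Second, even in the genuine overshoot case, your justification that the error terms vanish is unfounded: you divide by $n\geq\gamma_{m(n)}$ and assert that ``$\gamma_{m(n)}$ absorbs the relevant products of $A$ and $N$ factors.'' It does not: by construction $\gamma_m$ is built only from $N(l),A(l),q(l)$ with $l\leq m+1$, whereas $N(m(n)+2)$ and $A(m(n)+2)$ (the latter hidden in your cap $R_n(\omega)\leq\tilde A(m(n)+1)=\prod_{l\leq m(n)+2}A(l)+1$) are produced by Egorov and compactness independently of $\gamma_{m(n)}$ and may exceed it arbitrarily; they are only absorbed into $\gamma_{m(n)+1}$. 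The rescue in the overshoot case is that the overshoot condition forces $n$ to be comparable to $\gamma_{m(n)+1}$, and one must divide by that, not by $\gamma_{m(n)}$ — a point you never record, and which also undermines your estimate $R_n(\omega)/n\leq\tilde A(m(n)+1)/\gamma_{m(n)}$ for part (i). The paper avoids both problems simultaneously by choosing $m(n)$ maximal with $\gamma_{m(n)-1}+N(m(n))\leq n$ and splitting on whether $n(1+(\tilde A(m(n))m(n))^{-1})<\gamma_{m(n)}$: with that bookkeeping the wait always terminates within the first $O(N(m(n)+1))$ symbols of block $m(n)+1$, so only constants of index $\leq m(n)+1$ ever enter, and these are dominated by $\gamma_{m(n)-1}$ or $\gamma_{m(n)}\lesssim n$. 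You identify precisely this domination of the shifted indices as ``the delicate step,'' but you do not supply it, and in the form you state it (domination by $\gamma_{m(n)-1}$ or $\gamma_{m(n)}$) it is false.
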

\begin{proof}
We shall prove (i) and (ii) simultaneously. The proof of (iii) is similar to that of (ii). 
 
Suppose that $\D$ is wide and take $\omega \in V_{\tau}$ with $\tau \in U$. Given $n \in \N$ we choose $m(n)$ to be the maximal natural number with $\gamma_{m(n)-1}+N(m(n)) \leq n$. Now suppose $n(1+(\tilde{A}(m(n))m(n))^{-1})< \gamma_{m(n)}$. Then we may choose $\hat{\omega}^{m(n)} \in [\omega_{\gamma_{m(n)-1}+1}\cdots \omega_{\gamma_{m(n)}}]\cap V_{\tau}(m(n))$. It follows from C(iv) that,
\begin{eqnarray}
W_{n- \gamma_{m(n)-1}}(\hat{\omega}^{m(n)})&<& \frac{n- \gamma_{m(n)-1}}{\tilde{A}(m(n))m(n)}\\
&<&\frac{n}{\tilde{A}(m(n))m(n)}\\
&\leq &\gamma_{m(n)}-n.
\end{eqnarray}
Thus, there is some $l \in \left\lbrace n+1, \cdots, n(1+(\tilde{A}(m(n))m(n))^{-1})\right\rbrace$ with $\omega_{l}=(i',j^1_0)$ and $\omega_{l+1}=(i',j^2_0)$. It follows that $n+W_n(\omega) \leq \gamma_{m(n)}$ and hence, 
\begin{eqnarray*}
R_n(\omega) \leq \max_{l \leq m(n)} A(l) \leq \tilde{A}(m(n))< \frac{2 \gamma_{m(n)-1}}{m(n)-1}<\frac{2n}{m(n)}.
\end{eqnarray*}
Also, 
\begin{eqnarray*}
R_n(\omega) W_n(\omega)  \leq \tilde{A}(m(n)) \frac{n}{\tilde{A}(m(n))m(n)} < \frac{n}{m(n)}.
\end{eqnarray*}
On the other hand, suppose that $n(1+(\tilde{A}(m(n))m(n))^{-1})\geq \gamma_{m(n)}$. Then we may choose $\hat{\omega}^{m(n)+1} \in [\omega_{\gamma_{m(n)}+1}\cdots \omega_{\gamma_{m(n)+1}}]\cap V_{\tau}(m(n))$. By C(iv) we have,
\begin{eqnarray*}
W_{N(m(n)+1)}(\hat{\omega}^{m(n)+1})&<& \frac{N(m(n)+1)}{\tilde{A}(m(n)+1)(m(n)+1)}\\
&\leq &\gamma_{m(n)+1}-\gamma_{m(n)+1}.
\end{eqnarray*}
Thus, there is some $l \in \left\lbrace n+1, \cdots,\gamma_{m(n)}+N(m(n)+1)\left(\tilde{A}(m(n)+1)(m(n)+1)\right)^{-1}\right\rbrace$. It follows that,
\begin{eqnarray*}
n+ \W_n(\omega) \leq \gamma_{m(n)+1}+N(m(n)+1)\left(\tilde{A}(m(n)+1)(m(n)+1)\right)^{-1} <\gamma_{m(n)+2}.
\end{eqnarray*}
Hence,
\begin{eqnarray*}
R_n(\omega) &\leq& \max_{l \leq m(n)+1} A(l) \leq \tilde{A}(m(n)+1)\\
&<& \frac{2 \gamma_{m(n)}}{m(n)}< \frac{4n}{m(n)}.
\end{eqnarray*}
In addition, we have,
\begin{eqnarray*}
W_n(\omega) \leq \gamma_{m(n)}+\frac{N(m(n)+1)}{\tilde{A}(m(n)+1)(m(n)+1)}-n \leq \frac{n+N(m(n)+1)}{\tilde{A}(m(n))m(n)}.
\end{eqnarray*}
Hence, 
\begin{eqnarray*}
R_n(\omega) W_n(\omega)  \leq \frac{n+N(m(n)+1)}{m(n)} \leq \frac{n+\gamma_{m(n)}}{m(n)} \leq \frac{3n}{m(n)}.
\end{eqnarray*}
Thus, for all $n \in \N$ we have,
\begin{eqnarray*}
\max\left\lbrace \frac{R_n(\omega)}{n}, \frac{R_n(\omega) W_n(\omega)}{n} \right\rbrace \leq \frac{4}{m(n)}.
\end{eqnarray*}
Letting $n \rightarrow \infty$, and hence $m(n) \rightarrow \infty$, proves the lemma.
\end{proof}

To complete the proof of the lower bound we require a version of Marstrand's slicing lemma. 
\begin{lemma}\label{Slicing Lemma}
Let $J$ be any subset of $\R^2$, and let $K$ be any subset of the $y$-axis. If $\dim J \cap \left( \R \times \{y\}\right) \geq t$ for all $y \in K$, then $\dim J \geq t + \dim K$.
\end{lemma}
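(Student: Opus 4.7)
The plan is to apply the mass distribution principle (Lemma \ref{dim lem}) via Frostman-type measures: for arbitrary $s < \dim K$ and $u < t$ I would construct a finite Borel measure on $J$ whose ball mass decays like $r^{s+u}$, and then let $s \nearrow \dim K$ and $u \nearrow t$.

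First, I would invoke Frostman's lemma on $K$ (passing to an analytic subset of equal dimension if necessary) to produce a compactly supported Borel probability measure $\nu$ on $K$ satisfying $\nu(B(y,r)) \leq C r^s$ for all $y \in \R$ and $r > 0$. Next, for each $y \in K$, since the fiber $J \cap (\R \times \{y\})$ has Hausdorff dimension at least $t > u$, a second application of Frostman yields a Borel probability measure $\mu_y$ supported on this fiber with $\mu_y(B((x,y),r)) \leq C_y r^u$. I would then form the fibered measure
\[
\mu := \int \mu_y \, d\nu(y),
\]
a Borel probability measure supported on $J$, and bound
\[
\mu(B((x_0, y_0), r)) \leq \int_{|y - y_0| < r} \mu_y(B((x_0, y_0), r)) \, d\nu(y) \leq N r^u \cdot \nu(B(y_0, r)) \leq C' r^{s+u},
\]
using that the slice of the ball at height $y$ is empty unless $|y - y_0| < r$ and otherwise has diameter at most $2r$. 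The mass distribution principle (Lemma \ref{dim lem}) then forces $\dim J \geq s+u$; letting $s \nearrow \dim K$ and $u \nearrow t$ completes the argument.

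The main obstacle is the measurability of $y \mapsto \mu_y$ and the uniformity of the constants $C_y$, since a priori Frostman's lemma is applied pointwise and delivers no uniformity. To handle this I would stratify $K$ by setting
\[
K_N := \{ y \in K : \text{there exists } \mu_y \text{ as above with } C_y \leq N \},
\]
observe that $K = \bigcup_N K_N$, and note that by countable additivity at least one $K_N$ carries positive $\nu$-mass. Replacing $K$ and $\nu$ with $K_N$ and $\nu|_{K_N}$ then yields a uniform bound $C_y \leq N$ throughout the relevant set. Borel measurability of the selection $y \mapsto \mu_y$ on this stratum can be secured by a standard measurable selection theorem (e.g.\ Kuratowski--Ryll-Nardzewski) applied to the closed-valued correspondence assigning to each $y$ the nonempty, weak-$*$ closed set of Frostman measures on $J \cap (\R \times \{y\})$ with constant at most $N$. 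The rest of the argument is a routine Fubini-type estimate.
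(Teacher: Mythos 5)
Your strategy proves a weaker statement than the one asserted. The lemma --- which the paper does not prove but simply quotes from \cite[Corollary 7.12]{Falconer Fractal Geometry} --- is stated for \emph{arbitrary} subsets $J\subseteq\R^2$ and $K$ of the $y$-axis, and the cited proof is a direct covering argument that never manufactures a measure on $K$ or on the fibres. Your route needs Frostman's lemma both on $K$ and on every fibre $J\cap(\R\times\{y\})$, and Frostman's lemma requires Borel (or analytic) sets. The patch you propose, ``passing to an analytic subset of equal dimension'', is false for general sets: a Bernstein set $K\subset\R$ has full outer Lebesgue measure, hence $\dim K=1$, yet every analytic subset of $K$ is countable (an uncountable analytic set contains an uncountable compact set, which a Bernstein set cannot contain), so every analytic subset of $K$ has dimension $0$. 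The same obstruction recurs on the fibres, which for arbitrary $J$ are arbitrary subsets of a line. So as written your argument can at best give the Borel/analytic case; to use that weaker version in the paper one would additionally have to verify that the sets $\Pi_v(U)$ and $\Pi(V_\tau)$ fed into the lemma are analytic, which is exactly the kind of checking the arbitrary-set statement is designed to avoid.

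Even in the Borel setting the selection step is not yet a proof. Kuratowski--Ryll-Nardzewski requires a measurable correspondence with nonempty \emph{closed} values, but the set of measures carried by the fibre $J\cap(\R\times\{y\})$ with Frostman constant at most $N$ is in general not weak-$*$ closed when the fibre is not closed (weak-$*$ limits may live on the closure of the fibre rather than the fibre, while the Frostman measures you produce are carried by compact subsets of it), and the measurability in $y$ of this correspondence is asserted rather than proved; it is needed again to make sense of $\int\mu_y\,d\nu(y)$ and of $\nu(K_N)$, since the sets $K_N$ need not be $\nu$-measurable (one must argue with outer measure and countable subadditivity). These points are repairable --- e.g.\ via uniformization theorems for analytic sets, or by avoiding a selection altogether --- and your Fubini estimate for $\mu(B((x_0,y_0),r))$ is fine once a measurable family $(\mu_y)$ exists; but the cleanest and fully general fix is the classical covering proof behind \cite[Corollary 7.12]{Falconer Fractal Geometry}: slice a given cover of $J$ by the horizontal lines, use that $\mathcal{H}^{t-\epsilon}_{\infty}\bigl(J\cap(\R\times\{y\})\bigr)$ is bounded below on a piece $K_m\subseteq K$ with $\dim K_m$ arbitrarily close to $\dim K$ (countable stability of Hausdorff dimension holds for arbitrary sets), and sum over the cover. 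That argument uses no measures, no measurability hypotheses and no selection, which is why the lemma can be stated for arbitrary $J$ and $K$.
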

\begin{proof} See \cite[Corollary 7.12]{Falconer Fractal Geometry}. \end{proof}

\begin{lemma} \label{Lower bound S} $\dim \Pi(S) \geq \delta(\alpha)$.
\end{lemma}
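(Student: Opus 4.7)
The plan is to combine the symbolic dimension estimates from Lemma \ref{Symbdimest} with Lemma \ref{No borders lemma} (which supplies the hypotheses of Lemmas \ref{SymbDimPDimH} and \ref{SymbDimPDimV}) to get geometric local dimension bounds, and then glue fibres to the base via Marstrand's slicing lemma \ref{Slicing Lemma}.

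First I would handle the horizontal fibres. Fix $\tau \in U$ and consider the measure $\Z^\tau$ on $\Sigma$, which is supported on $\pi^{-1}\{\tau\}$ by the construction. Let $\nu^\tau := \Z^\tau \circ \Pi^{-1}$; this is a finite Borel measure on the line $\R \times \{\Pi_v(\tau)\}$. For each $\omega \in V_\tau$, Lemma \ref{No borders lemma}(i)--(ii) gives $\lim_n R_n(\omega)/n = \lim_n R_n(\omega) W_n(\omega)/n = 0$, so Lemma \ref{SymbDimPDimH} applies and combines with Lemma \ref{Symbdimest}(ii) to yield
\[
\liminf_{r \to 0}\frac{\log \nu^\tau(B(\Pi(\omega),r))}{\log r} \geq \liminf_{n\to\infty} \frac{-\log \Z^\tau([\omega|n])}{S_n(\chi)(\omega)} \geq \delta_h(\alpha).
\]
Since $\Z^\tau(V_\tau) > 0$ by Lemma \ref{U>0}, Lemma \ref{dim lem} gives $\dim \Pi(V_\tau) \geq \delta_h(\alpha)$. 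Because $V_\tau \subseteq S$ and every $\omega \in V_\tau$ satisfies $\pi(\omega)=\tau$, we in fact have $\Pi(V_\tau) \subseteq \Pi(S) \cap \bigl(\R \times \{\Pi_v(\tau)\}\bigr)$.

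Next I would handle the vertical projection. Let $\nu_v := \W \circ \Pi_v^{-1}$, a finite Borel measure on the $y$-axis. For each $\tau \in U$, Lemma \ref{No borders lemma}(iii) yields $\lim_n T_n(\tau)/n = 0$, so Lemma \ref{SymbDimPDimV} together with Lemma \ref{Symbdimest}(i) gives
\[
\liminf_{r \to 0}\frac{\log \nu_v(B(\Pi_v(\tau),r))}{\log r} \geq \liminf_{n\to\infty} \frac{-\log \W([\tau|n])}{S_n(\psi)(\tau)} \geq \delta_v(\alpha).
\]
Since $\W(U) > 0$ by Lemma \ref{U>0}, Lemma \ref{dim lem} gives $\dim \Pi_v(U) \geq \delta_v(\alpha)$.

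Finally, set $K := \Pi_v(U) \subseteq \{0\} \times I$ (identified with the $y$-axis); for each $y \in K$ choose $\tau \in U$ with $\Pi_v(\tau)=y$, so that $\Pi(S) \cap (\R \times \{y\}) \supseteq \Pi(V_\tau)$ has dimension at least $\delta_h(\alpha)$ by the first step. Marstrand's slicing lemma \ref{Slicing Lemma} then gives $\dim \Pi(S) \geq \delta_h(\alpha) + \dim K \geq \delta_h(\alpha) + \delta_v(\alpha) = \delta(\alpha)$, which is the desired inequality. The only mild obstacle is the bookkeeping in the degenerate cases where $\D$ fails to be wide or to be tall; in each such case one of $\delta_h(\alpha), \delta_v(\alpha)$ is automatically zero (the corresponding fibres collapse to points), so the argument reduces to a single application of Lemma \ref{dim lem} on the non-degenerate factor and the slicing step becomes trivial.
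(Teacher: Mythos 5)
Your argument is correct and follows essentially the same route as the paper's proof: fibre estimates for $\Z^{\tau}\circ\Pi^{-1}$ via Lemmas \ref{No borders lemma}, \ref{SymbDimPDimH}, \ref{Symbdimest}(ii) and \ref{dim lem}, the base estimate for $\W\circ\Pi_v^{-1}$ via Lemmas \ref{SymbDimPDimV} and \ref{Symbdimest}(i), and then Marstrand's slicing lemma \ref{Slicing Lemma}. Your treatment of the degenerate cases (not wide/not tall forces the corresponding $\delta_h(\alpha)$ or $\delta_v(\alpha)$ to vanish) is just the contrapositive of the paper's case split, where $\delta_v(\alpha)>0$ or $\delta_h(\alpha)>0$ is shown via A(i)--A(ii) to force $\D$ to be tall or wide.
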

\begin{proof}
Recall that, $S:=\left\lbrace \omega \in \Sigma: \pi(\omega) \in U \text{ and } \omega \in V_{\pi(\omega)}\right\rbrace.$
It follows that, $\Pi(S)= \bigcup_{\tau \in U} \Pi(V_{\tau})$. Thus, for each $y =\Pi_v(\tau) \in \Pi_v(U)$ with $\tau \in U$ we have 
$\Pi(S)\cap \left( \R \times \{y\}\right)= \Pi(V_{\tau})$, since $V_{\tau} \subseteq \pi^{-1}\{\tau\}$. Hence, by Lemma \ref{Slicing Lemma} suffices to prove that $\dim \Pi_v(U) \geq \delta_v(\alpha)$ and for each $\tau \in U$ we have $\dim \Pi(V_{\tau}) \geq \delta_h(\alpha)$. 

To see that $\dim \Pi_v(U) \geq \delta_v(\alpha)$ we consider two cases. Either $\delta_v(\alpha)=0$, in which case the supposition is trivial since $U \neq \emptyset$ by Lemma \ref{+measure}, or $\delta_v(\alpha)>0$. It follows from A(i) that for some $\mu \in \M_{\sigma}(\Sigma)$ we have $h_{\mu \circ \pi^{-1}}( \sigma_v)>0$. Consequently $\D$ must be tall. Thus, by Lemma \ref{No borders lemma} the hypotheses of Lemma \ref{SymbDimPDimV} are satisfied, and so by Lemma \ref{SymbDimPDimV} combined with Lemma \ref{Symbdimest} (i) for all $y=\Pi_v(\tau) \in \Pi(U)$ we have,
\begin{equation*}
\liminf_{r \rightarrow 0}\frac{\log \W \circ \Pi_v^{-1}( B(y,r))}{\log r}\geq \liminf_{n \rightarrow \infty}\frac{-\log \W([\tau|n])}{S_n(\psi)(\tau)}\geq \delta_v(\alpha).
\end{equation*}
Since, by Lemma \ref{+measure}, $\W\circ \Pi_v^{-1}(\Pi_v(U)) \geq \W(U)>0$, by Lemma \ref{dim lem} we have $\dim \Pi_v(U) \geq \delta_v(\alpha)$.

Now fix $\tau \in U$. To show that $\dim \Pi(V_{\tau}) \geq \delta_h(\alpha)$ we proceed similarly. If $\delta_h(\alpha)=0$ then by Lemma \ref{+measure} $V_{\tau} \neq \emptyset$ and so the supposition is trivial. If on the other hand $\delta_h(\alpha)>0$ then by A(ii) we have
$h_{\mu}(\sigma|\pi^{-1}\mathscr{A})>0$ for some $\mu \in \M_{\sigma}(\Sigma)$ and consequently $\D$ must be wide. Thus, by Lemma \ref{No borders lemma} the hypotheses of Lemma \ref{SymbDimPDimH} are satisfied, and so by Lemma \ref{SymbDimPDimH} combined with Lemma \ref{Symbdimest} (ii) for all $x=\Pi(\omega) \in \Pi(V_{\tau})$ we have,
\begin{equation*}
\liminf_{r \rightarrow 0}\frac{\log \Z^{\tau}\circ \Pi^{-1}( B(x,r))}{\log r}\geq \liminf_{n \rightarrow \infty}\frac{-\log \Z^{\tau}([\omega|n])}{S_n(\chi)(\omega)} \geq \delta_h(\alpha).
\end{equation*}
Again, by Lemma \ref{+measure}, $\Z^{\tau}\circ \Pi^{-1}(\Pi(V_{\tau})) \geq \W(V_{\tau})>0$, by Lemma \ref{dim lem} we have $\dim \Pi(V_{\tau}) \geq \delta_h(\alpha)$. Thus, by \ref{Slicing Lemma} the lemma holds.
\end{proof}

To complete the proof of the lower bound we note that by Lemma \ref{S subset} $\Pi(S) \subseteq J_{\varphi}(\alpha)$. Therefore, by Lemma \ref{Lower bound S} we have,
\begin{equation*}
\dim J_{\varphi}(\alpha) \geq \lim_{m \rightarrow \infty} \sup \left\lbrace D(\mu):\mu \in \M^*_{\sigma}(\Sigma),  \int \varphi_k d\mu \in B_m(\alpha_k) \text{ for } k \leq m \right\rbrace.
\end{equation*}

\end{document}